\newcolumntype{L}{>{$}l<{$}}
\DeclareMathSymbol{\shortminus}{\mathbin}{AMSa}{"39}
\newtheorem{theorem}{Theorem}[section]
\newtheorem{lemma}[theorem]{Lemma}
\newtheorem{corollary}[theorem]{Corollary}
\newtheorem{proposition}[theorem]{Proposition}
\theoremstyle{definition}
\newtheorem{remark} [theorem] {Remark}
\theoremstyle{definition}
\newcommand{\C}{{\mathbb{C}}}
\newcommand{\F}{{\mathbb{F}}}
\newcommand{\Q}{{\mathbb{Q}}}
\newcommand{\Z}{{\mathbb{Z}}}
\newcommand{\lk}{\ell\text{k}}
\newcommand{\Sym}{\text{Sym}}
\DeclareMathOperator{\HFL}{HFL}
\DeclareMathOperator{\HFK}{HFK}
\DeclareMathOperator{\HF}{HF}
\DeclareMathOperator{\CKh}{CKh}
\DeclareMathOperator{\Kh}{Kh}
\DeclareMathOperator{\CFL}{CFL}
\DeclareMathOperator{\CFK}{CFK}
\DeclareMathOperator{\rank}{rank}
\DeclareMathOperator{\LLL}{L}
	\title{Rank Bounds in Link Floer Homology and Detection Results}
\author[Binns]{Fraser Binns}
\author[Dey]{Subhankar Dey}
\address[]{Department of Mathematics, Boston College}
\email{fb1673@princeton.edu}
\address[]{Department of Mathematics, University of Alabama}
\email{subhankar.dey@durham.ac.uk}
\begin{document}

	\maketitle
	
	\begin{abstract}
	Viewing the BRAID invariant as a generator of link Floer homology we generalise work of Baldwin-Vela-Vick to obtain rank bounds on the next-to-top grading of knot Floer homology. These allow us to classify links with knot Floer homology of rank at most eight, and prove a variant of a classification of links with Khovanov homology of low rank due to Xie-Zhang. In another direction we use a variant of Ozsv\'ath-Szab\'o's classification of $E_2$ collapsed $\Z\oplus\Z$ filtered chain complexes to show that knot Floer homology detects $T(2,8)$ and $T(2,10)$. Combining these techniques with the spectral sequences of Batson-Seed, Dowlin, and Lee we can show that Khovanov homology likewise detects $T(2,8)$ and $T(2,10)$.
	\end{abstract}

\begin{section}{Introduction}

Link Floer homology is a powerful invariant of links in $S^3$ due to Ozsv\'ath-Szab\'o, taking value in the category of multi-graded vector spaces~\cite{HolomorphicdiskslinkinvariantsandthemultivariableAlexanderpolynomial}. While powerful, it is not a \textit{complete} invariant; there exist non-isotopic links with isomorphic link Floer homology~\cite{hedden2018geography}. One of the main goals of this paper is to exhibit links which are fully determined -- ``detected" -- by their link Floer homology. More generally we address several variants of the ``geography" question for link Floer homology -- namely which vector spaces arise as the link Floer homology of some link -- as well as the ``botany" question -- which links  have a prescribed link Floer homology. 

One of our approaches to the geography problem goes through a result that uses a contact geometric invariant which lives in Heegaard Floer homology. Namely we study the \textit{BRAID} invariant, an invariant of transverse links due to Baldwin--Vela-Vick--V\'ertesi~\cite{baldwin2013equivalence} which is a version of Ozsv\'ath-Szab\'o's contact class~\cite{ozsvath2005heegaard}. In particular we generalize the rank bound from Baldwin--Vela-Vick~\cite[Theorem 1.1]{baldwin_note_2018}. The most concise version of our result is the following:

\newtheorem*{HFKrank}{Theorem~\ref{rankinnequalityforfiberedlinks}}
\begin{HFKrank}
	 Let $L$ be a non-trivial $n$ component fibered link in a rational homology sphere. Then $$\rank(\widehat{\HFK}(L;\dfrac{n-2-\chi(L)}{2}))\geq n.$$
\end{HFKrank}

Here $\chi(L)$ denotes the maximal Euler characteristic of a potentially disconnected surface bounding $L$. We may take coefficients to be $\Z,\Z_2$ -- the field with two elements -- or $\Q$, as we do for the remainder of this paper unless otherwise specified. Some care is required when working with $\Z$ or $\Q$, see sections~\ref{A Brief Review of Link Floer homology} and~\ref{BRAIDsection} for details. Note that $\dfrac{n-2-\chi(L)}{2}$ is the next to top Alexander grading of knot Floer homology. See Lemma~\ref{chainlevel} and Corollary~\ref{Nigeneralized} for stronger versions of Theorem~\ref{rankinnequalityforfiberedlinks} under appropriate additional hypotheses. 

Theorem~\ref{rankinnequalityforfiberedlinks} allows us to prove a number of detection results:

\newtheorem*{rank4}{Theorem~\ref{rank4}}
\begin{rank4}
If $\rank(\widehat{\HFK}(L))=4$ then $L$ is a Hopf link or the three component unlink.
\end{rank4}

\newtheorem*{rank6}{Theorem~\ref{rank6}}
\begin{rank6}
If $\rank(\widehat{\HFK}(L))=6$ then $L$ is a disjoint union of  an unknot and a trefoil.
\end{rank6}

\newtheorem*{rank8}{Theorem~\ref{rank8}}
\begin{rank8}
If $\rank(\widehat{\HFK}(L))=8$ then $L$ is $T(2,4)$, $T(2,-4)$, a four component unlink, or the disjoint union of a Hopf link and an unknot.
\end{rank8}

\newtheorem*{HFKdetcetstrefoilmeridian}{Theorem~\ref{HFKdetcetstrefoilmeridian}}

\begin{HFKdetcetstrefoilmeridian}
Knot Floer homology detects the link consisting of $T(2,3)$ and a meridian.
\end{HFKdetcetstrefoilmeridian}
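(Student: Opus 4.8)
The plan is to first pin down the link Floer homology of the model link $L_0 = T(2,3) \cup \mu$ and then to show that any link sharing this invariant must coincide with $L_0$. I would begin by computing $\widehat{\HFL}(L_0)$, and hence $\widehat{\HFK}(L_0)\cong\widehat{\HFL}(L_0)\otimes V$ with $V$ of rank two, recording three structural features: that $L_0$ has two components; that its fiber surface is the twice-punctured torus $\Sigma_{1,2}$, obtained from the once-punctured torus Seifert surface of the trefoil by the single puncture where $\mu$ pierces it, so that $L_0$ is fibered with $-\chi=2$; and that the linking number of its two components is $1$. The multivariable Alexander polynomial, obtained from the Torres conditions together with $\Delta_{T(2,3)}(t)=t-1+t^{-1}$, then determines $\chi(\widehat{\HFL}(L_0))$, and in particular the total rank and the extremal Alexander gradings.

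Now let $L$ be any link with $\widehat{\HFK}(L)\cong\widehat{\HFK}(L_0)$. First I would recover that $L$ has exactly two components from the graded structure, using the tensor factor $V^{\otimes(n-1)}$ that relates $\widehat{\HFK}$ to $\widehat{\HFL}$, so that $n=2$. Next, by the Ozsv\'ath--Szab\'o theorem that link Floer homology detects the Thurston norm, the complement of $L$ has the same Thurston norm polytope as that of $L_0$; in particular one Alexander direction has norm $2$ and the other has norm $0$. Since $\widehat{\HFK}(L)$ agrees with $\widehat{\HFK}(L_0)$ in the top Alexander grading, which carries rank one, Ni's fiberedness detection shows that $L$ is fibered, necessarily with fiber $\Sigma_{1,2}$. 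Theorem~\ref{HFKrank} may be used here as an independent check that the extremal gradings of $L$ are forced to have the expected small rank.

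It remains to identify $L$ geometrically. The component $C$ lying in the norm-zero direction bounds a Thurston-norm-minimizing surface of non-negative Euler characteristic; since the two components link once, I would argue this surface is an annulus cobounded by $C$ and a meridian of the other component $K'$, so that $C$ is a meridian of $K'$. Capping off $C$ then exhibits the fiber of $K'$ as the once-punctured torus, so $K'$ is a genus-one fibered knot. The genus-one fibered knots are precisely $T(2,3)$, $T(2,-3)$, and the figure-eight knot; the figure-eight is excluded by the total rank of $\widehat{\HFK}(L)$, and $T(2,-3)$ by the Maslov gradings, equivalently by the sign of the invariant carried by the fiber generator, leaving $K'=T(2,3)$. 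As a meridian is unique up to isotopy, this yields $L = T(2,3)\cup\mu = L_0$.

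The main obstacle is the geometric step identifying the norm-zero component $C$ as a meridian: one must rule out that the norm-minimizing annulus joins $C$ to a longitude, or to another essential curve, on $\partial\nu(K')$ rather than to a meridian, which is exactly where the linking-number-one hypothesis and a careful analysis of the norm-minimizing surface enter. A secondary subtlety is making the passage from $\widehat{\HFK}(L)$ to the isomorphism type of $\widehat{\HFK}(K')$ clean enough to invoke knot Floer homology's detection of the trefoil and to separate it from its mirror.
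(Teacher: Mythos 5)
Your proposal has a genuine gap at its central step, and it also miscomputes the model invariant. The hypothesis is an isomorphism of \emph{knot} Floer homology, $\widehat{\HFK}(L)\cong\widehat{\HFK}(L_0)$, which in this paper's conventions is the projection of $\widehat{\HFL}(L)$ onto the diagonal Alexander grading -- in particular $\widehat{\HFK}(L_0)\not\cong\widehat{\HFL}(L_0)\otimes V$ (the total ranks of $\widehat{\HFK}$ and $\widehat{\HFL}$ agree, with no extra tensor factor), and likewise the component count $n=2$ is not read off from a ``$V^{\otimes(n-1)}$ tensor factor'': the paper gets it from the maximal Maslov grading $\frac{1}{2}$, the spectral sequence to $\widehat{\HF}(\#^{n-1}S^1\times S^2)$, and the evenness of $\rank(\widehat{\HFK}(L))$. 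More seriously, Ozsv\'ath--Szab\'o's Thurston norm detection is a statement about the multi-graded link Floer polytope, so from the diagonal collapse alone you cannot conclude that the Thurston polytopes of $L$ and $L_0$ agree, nor that ``one Alexander direction has norm $2$ and the other norm $0$.'' Everything downstream in your argument -- the norm-zero annulus identifying $C$ as a meridian, the fiber direction, and especially excluding $T(2,-3)$ via the Maslov gradings of $\widehat{\HFK}(K')$ -- depends on multi-graded, component-wise data that you have not recovered from the stated hypothesis.

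Reconstructing $\widehat{\HFL}(L)$ from $\widehat{\HFK}(L)$ is precisely the content of the paper's proof, and it is where the real work lies. The paper first gets linking number one (the Conway polynomial detects it), then locates the Maslov index $0$ generator in bi-Alexander grading $(\frac{3}{2},\frac{1}{2})$ using the absence of positive Maslov gradings together with the constraint $A_1(0)+A_2(0)+\lk(L_1,L_2)=2$; the LOSS-type rank bound, Lemma~\ref{chainlevel}, then produces Maslov index $-1$ generators in gradings $(\frac{1}{2},\frac{1}{2})$ and $(\frac{3}{2},-\frac{1}{2})$, and the spectral sequences to $\widehat{\HFL}(L_i)\otimes V$ together with the symmetry of $\widehat{\HFL}$ force the remaining generators, yielding $\widehat{\HFL}(L)\cong\widehat{\HFL}(L_0)$. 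Only at that point does the paper conclude -- not by any geometric identification, but by citing the link Floer homology detection result of~\cite{binns2020knot}. Your geometric endgame (fiberedness, the meridional annulus, the classification of genus-one fibered knots) could in principle substitute for that citation, which would be a genuinely different and self-contained finish; but it only becomes available after the multi-graded reconstruction. As written, you demote Theorem~\ref{HFKrank} to an ``independent check,'' when it (in the chain-level form of Lemma~\ref{chainlevel}) must be the engine of the proof.
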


These results can be viewed as extensions of some of the detection results in~\cite{BG} and hold for coefficients in $\Z_2, \Z$ or $\Q$.  We note also that while this manuscript was in preparation, an independent proof of the fact that the disjoint union of the Hopf link and the unknot is the only three component link with knot Floer homology of rank eight appeared in~\cite{kim2020links}.

Applying Dowlin's spectral sequence from Khovanov homology to knot Floer homology, Theorems~\ref{rank4}, \ref{rank6} and \ref{rank8} allow us to prove variants of results due to Xie-Zhang~\cite{xie2022links}. 

\newtheorem*{KHr2components}{Corollary~\ref{KHr2components}}
\begin{KHr2components}
Suppose that $L$ is a two component pointed link with $\rank(\widetilde{\Kh}(L,p;\Q))\leq 4$. Then $L$ is one of the following: \begin{itemize}
    \item an unlink;
\item a Hopf link;
\item $T(2,4)$ or $T(2,-4)$.
\end{itemize}
\end{KHr2components}

\newtheorem*{KHr3components}{Corollary~\ref{KHr3components}}
\begin{KHr3components}
Suppose that $L$ is a three component pointed link. Then $\rank(\widetilde{\Kh}(L,p;\Q)) >2$.
\end{KHr3components}
Here $\widetilde{\Kh}(L,p;\Q)$ is the reduced Khovanov homology of the pointed link $(L,p)$. Tye Lidman has pointed out to the authors that Corollary~\ref{KHr3components} can be obtained from the spectral sequence from reduced Khovanov homology of $L$ to the double branched cover of $L$ due to Ozsv\'ath-Szab\'o~\cite{ozsvath2005heegaard}.

Using different methods, we can show link Floer homology detection results for a number of infinite families of links. Let $K_{2,2n}$ denote the cable link with pattern $T(2,2n)$ and companion $K$. We show that link Floer homology detects the $(2,2n)$ cables of two of the non-trivial fibered knots that knot Floer homology is currently known to detect, namely the trefoils and the figure eight\footnote{Since this paper first appeared, it has also been shown that knot Floer homology detects $T(2,\pm 5)$~\cite{farber2022fixed} as well as some other knots, see~\cite{baldwin2022floer}}.

\newtheorem*{HFL2detectsRHTcables}{Theorem~\ref{HFL2detectsRHTcables}}
\begin{HFL2detectsRHTcables}
Link Floer homology detects $T(2,3)_{2,2n}$ for all $n$.
\end{HFL2detectsRHTcables}

Note here that the $(2,2n)$-cable of $T(2,3)$ is the $(2,-2n)$-cable of $T(2,-3)$.

\newtheorem*{HFLdetects(2,2n)cablesofthefigureeight}{Theorem~\ref{HFLdetects(2,2n)cablesofthefigureeight}}
\begin{HFLdetects(2,2n)cablesofthefigureeight}
Let $K$ be the figure eight knot i.e. $4_1$. Link Floer homology detects $K_{2,2n}$ for every $n$.
\end{HFLdetects(2,2n)cablesofthefigureeight}
We prove these results  over $\Q,\Z$ and $\Z_2$, apart from in the $n=0$ cases in which case we only prove the result with $\Z_2$ coefficients.

Our second approach to the geography problem is via algebra, namely a version of Ozsv\'ath-Szab\'o's classification of $E_2$ collapsed chain complexes~\cite[Section 12]{HolomorphicdiskslinkinvariantsandthemultivariableAlexanderpolynomial}. We obtain the following results:

\newtheorem*{HFKT28}{Theorem~\ref{HFKT28}}
\begin{HFKT28}
Knot Floer homology detects $T(2,8)$.
\end{HFKT28}
\newtheorem*{HFKT210}{Theorem~\ref{HFKT210}}
\begin{HFKT210}
Knot Floer homology detects $T(2,10)$.
\end{HFKT210}

These results again hold over $\Q,\Z$ and $\Z_2$. Here we have given $T(2,8)$ and $T(2,10)$ the braid orientation, contrary to the case in~\cite{binns2020knot}, where knot Floer homology was shown to detect $T(2,2n)$, oriented as the boundary of an annulus, for all $n$.

Combining these techniques with the Dowlin~\cite{dowlin2018spectral}, Lee~\cite{lee2005endomorphism}, and Batson-Seed~\cite{batson2015link} spectral sequences we obtain the corresponding detection results for Khovanov homology.

\newtheorem*{KHT28}{Theorem~\ref{KHTT28}}
\begin{KHT28}
Suppose $\Kh(L;\Z)\cong\Kh(T(2,8);\Z)$. Then $L$ is isotopic to $T(2,8)$.
\end{KHT28}
\newtheorem*{KHT210}{Theorem~\ref{KHT210}}
\begin{KHT210}
Suppose $\Kh(L;\Z)\cong\Kh(T(2,10);\Z)$. Then $L$ is isotopic to $T(2,10)$.
\end{KHT210}
Khovanov homology was previously known to detect $T(2,2n)$ for $n=\pm 3$ by work of Martin~\cite{martin2022khovanov}, $n=\pm 2$ by Xie-Zhang~\cite{xie_instanton_2019}, $n=\pm 1$ by Baldwin-Sivek-Xie~\cite{baldwin2019khovanov}, and $n=0$ by a combination of Hedden-Watson~\cite{hedden2010manifolds} and Kronheimer-Mrowka~\cite{kronheimer2011khovanov}.

	The outline of this paper is as follows: in Sections~\ref{A Brief Review of Link Floer homology} and~\ref{BRAIDsection} we review pertinent properties of link Floer homology and the BRAID invariant, respectively. In Section~\ref{linkcontact} we use the BRAID invariant to prove several rank bounds results in link Floer homology. We collect our detection results for $(2,2n)$-cables in Section~\ref{Detection Results for $(2,2n)$-cables}. We prove a technical results necessary for subsequent Sections in Section~\ref{Technical Results}. Section~\ref{Knot Floer Homology Detects $T(2,8),T(2,10)$} is devoted to showing that knot Floer homology detects $T(2,8)$ and $T(2,10)$, in Section~\ref{Rank Detection Results for Knot Floer homology} we give botany results for the rank of Knot Floer homology, and in Section~ \ref{Detection Results for Khovanov Homology} we give detection results for Khovanov homology.

	\subsection*{Acknowledgements}
We would like to thank the organisers of the ``Trends in Low Dimensional Topology" seminar series for providing the platform which led to this project, as well as Tye Lidman and Jen Hom for comments on an earlier draft. We are especially grateful to Lev Tovstopyat-Nelip for pointing out an error in an earlier version of this paper, and the referee for helping us make numerous significant improvements. The first author would also like to thank John Baldwin and Gage Martin for numerous helpful conversations, as well as Tao Li. The second author would like to acknowledge partial support of NSF grants DMS 2144363, DMS 2105525, and AMS-Simons travel grant.
	\end{section}

	\begin{section}{A Very Brief Review of Link Floer homology}\label{A Brief Review of Link Floer homology}
	
	In this section we review link Floer homology, partially to fix conventions and notation, with an emphasis on the structural properties that we will use in the subsequent sections.
	
Let $L$ be an oriented $n$ component link in a rational homology sphere $M$. The pair $(L,M)$ can be encoded as a Heegaard diagram for $M$ with $n$ pairs of basepoints $\{w_i,z_i\}$. The link Floer complex of $L$, defined by Ozsv\'ath-Szab\'o in~\cite{HolomorphicdiskslinkinvariantsandthemultivariableAlexanderpolynomial}, is a multi-graded $\Z_2[U_1,U_2,\dots U_n]$ chain complex with underlying vector space: $$\CFL^-(L,M)\cong\bigoplus_{m,A_1,A_2,\dots A_n}\CFL^-_m(L,M)(A_1,A_2\dots A_n).$$ 

The three manifold $M$ is often apparent from the context and duly suppressed in the notation. The $A_i$ gradings are called the \emph{Alexander gradings} and can be thought of as elements in $\Z+\frac{\lk(L_i,L-L_i)}{2}$, while $m$ is called the \emph{Maslov grading}, which is integer valued. $\CFL^-(L)$ is endowed with a differential which counts pseudo-holomorphic disks in a certain auxiliary symplectic manifold endowed with an appropriate almost complex structure which do not intersect submanifolds $V_{z_i}$ determined by $z_i$. These counts are weighted by a count of intersections with the manifolds $V_{w_i}$, determined by $w_i$. The filtered chain homotopy type of $\CFL^-(L)$ is an invariant of $L$, which we refer to as the \emph{link Floer complex of $L$}. The homology of $\CFL^-(L)$ is denoted $\HFL^-(L)$. $\CFL^-(L)$ has a quotient defined by setting $U_i=0$ for all $i$ whose filtered chain homotopy type is also an invariant of $L$. This complex is denoted by $\widehat{\CFL}(L)$, and has homology denoted by $\widehat{\HFL}(L)$, which we will call the \emph{link Floer homology of $L$}. The \emph{link Floer homology polytope} of $L$ is defined as the convex hull of the multi-Alexander gradings of $\widehat{\HFL}(L)$ with non-trivial support. The link Floer polytope determines the Thurston polytope of the link exterior by a result of Ozsv\'ath-Szab\'o~\cite{ozsvath2008linkFloerThurstonnorm}. Note that $\widehat{\HFL}(\overline{L})$ is the dual -- in the sense of ~\cite[Equation 3]{BG} -- of $\widehat{\HFL}(L)$, where $\overline{L}$ is the mirror of $L$.

For each component $L_i$ of $L$ there is a spectral sequence from $\widehat{\HFL}(L)$ to $\widehat{\HFL}(L-L_i)\otimes V[\frac{\lk(L_i,L_j)}{2}]$. Here $V\cong \F_0\oplus\F_{-1}$, supported in multi-Alexander grading zero, while $[\frac{\lk(L_i,L_j)}{2}]$ indicates a shift in the $A_j$ grading, corresponding to $L_j$ which is a component in $L$ other than $L_i$. Indeed, $\widehat{\HFL}(L)$ can be viewed as the graded part of a multi-filtered chain complex with total homology $\widehat{\HF}(M)\otimes V^{n-1}$, where $\widehat{\HF}(M)$ is the Heegaard Floer homology of $M$ as defined by Ozsv\'ath-Szab\'o~\cite{ozsvath2004holomorphic}. If a component $L_i$ of $L$ is fibered and $L$ is non-split, then $L-L_i$ is braided about $L_i$ exactly when $\widehat{\HFL}(L)$ is of rank $2^{n-1}$ in the maximal $A_i$ grading of non-trivial support by a result of Martin~\cite{martin2022khovanov}.
	 
	 The \emph{knot Floer homology} of $L$, $\widehat{\HFK}(L)$, is an oriented link invariant due independently to J. Rasmussen~\cite{Rasmussen} and Ozsv\'ath-Szab\'o \cite{Holomorphicdisksandknotinvariants} that can be obtained from $\widehat{\HFL}(L)$ by projecting the multi-Alexander grading onto the diagonal, and shifting the Maslov gradings up by $\frac{n-1}{2}$. For an $n$ component link $L$, $\widehat{\HFK}(L)$ can also be thought as the knot Floer homology of the knotified link $\kappa(L) \subset(\#^{n-1}(S^2 \times S^1))\#M$ \cite{ozsvath2004holomorphic}. Since the knot Floer chain complex is a \textit{filtered} version of Heegaard Floer chain complex, we have that $\dim(\widehat{HFK}(L)) \geq \dim(\widehat{\HF}(M(\#^{n-1}(S^2 \times S^1))) = 2^{n-1}\dim(\widehat{\HF}(M))$. It likewise follows that $\dim(\widehat{\HFK}(L))\equiv \dim(\widehat{\HF}(M))\cdot 2^{n-1}\mod{2}$, so that $\dim(\widehat{HFK}(L))$ is odd only if $L$ has a single component. Also note that $\widehat{\HFK}(L)$ determines the maximal Euler characteristic of a surface bounding $L$ \cite{ni2006note}, as well as whether or not $L$ fibered~\cite{ghiggini2008knot},~\cite{ni2007knot}. 
	 
	 Versions of $\widehat{\HFL}(L,M)$ and $\widehat{\HFK}(L,M)$ can also be defined with $\Z$ or $\Q$ coefficients~\cite{sarkar2011note}. In fact there are $2^{n-1}$ versions of each theory, each corresponding to a \emph{coherent system of orientations} on the moduli space of pseudo-holomorphic disks. We will only have to be careful with which version of these theories we are using when applying work of Dowlin~\cite{dowlin2018spectral} which gives a spectral sequence from Khovanov homology to Knot Floer homology endowed with the orientation given in~\cite{alishahi2015refinement}. As is customary, we suppress the dependence of knot Floer homology on the coherent system of orientations in our notation. Each of the knot Floer homology theories detects the maximal Euler characteristic of a surface bounding $L$~\cite{ni2006note}, as well as whether or not $L$ is fibered. This follows from work of Juh\'asz~\cite{juhasz2008floer}.
	 
	 The \emph{Conway polynomial} of $L$ can be obtained as an appropriate decategorification of $\widehat{\HFL}(L)$. A result of Hoste~\cite{hoste1985firstcoefficientoftheconwaypolynomial} implies that the Conway polynomial detects the linking number of two component links. It follows that knot Floer homology and link Floer homology also detect the linking number of two component links.

	\end{section}
	
	\begin{section}{The BRAID Invariant}\label{BRAIDsection}
	In this section we review the BRAID invariant, an invariant of (generalized) braids due to Baldwin-Vela-Vick-V\'ertesi~\cite{baldwin2013equivalence}. We find two different perspectives on braids to be useful. First we  define a braid in an open book $(\Sigma,\phi)$ to be a link which intersects every page transversely, up to isotopy through families of such links. Any such braid can be recovered from the data of a \emph{pointed open book} $(\Sigma,\phi,\{p_i\})$, where $\phi:\Sigma\to\Sigma$ fixes the points $\{p_i\}\subseteq\Sigma$ as a set. We assume henceforth that $(\Sigma,\phi)$ is an open book decomposition for some rational homology sphere.

	The BRAID invariant of a braid $L$ represented by a pointed open book $(\Sigma,\phi,\{p_i\})$ is defined concretely from a Heegaard diagram determined by the pointed book. We recall the construction here. Take a \emph{basis of arcs} $\{a_j\}$ for $\Sigma-\{p_i\}$. Here a basis of arcs is a maximal collection of properly embedded arcs in $\Sigma-\{p_i\}$ that are homologically independent in $H_1(\Sigma-\{p_i\},\partial\Sigma)$. An example of such is shown in red in the lower half of the surface shown in Figure~\ref{BRAIDDISK}. Consider the basis of arcs $b_j$ obtained by pushing the ends of the $a_j$ curves around $\partial \Sigma$ in the direction dictated by the orientation of $\partial\Sigma$ and isotoping so that $a_j\cap b_j$ consists of a single point, $c_j$. In Figure~\ref{BRAIDDISK} the points $c_j$ are represented by green dots. Form a multi-pointed Heegaard diagram consisting of $\Sigma\cup_{\partial\Sigma}(-\Sigma)$ with $\alpha$ curves consisting of the union of the $a$ arcs in $\Sigma$ and $-\Sigma$, and $\beta$ curves consisting of the union of the $b$ arcs in $\Sigma$ and $\phi(b)$ in $-\Sigma$. In Figure~\ref{BRAIDDISK} we have not shown $\phi(b_j)$ for all $j$ to avoid clutter. The $w_i$ basepoints consist of $\{p_i\} \subset-\Sigma$ while the $z_i$ base points consist of $\{p_i\}\subset\Sigma$. In Figure~\ref{BRAIDDISK} the two black dots in the upper half of the diagram are $w_i$ basepoints, while the two black dots in the lower half of the diagram represent $z_i$ basepoints. We call diagrams constructed in this manner Heegaard diagrams \emph{adapted} to $(\Sigma,\phi,\{p_i\})$, $\{a_j\}$. The BRAID invariant of $L$ is defined as the homology class of $\{c_i\}$. This can be viewed as a class in either $\widehat{\CFL}(-M,L;\Z_2)$, or $\CFL^-(-M,L;\Z_2)$. We denote these classes by $\widehat{t}(L)$ or $t(L)$ respectively.  Baldwin-Vela-Vick-V\'ertesi show that $\widehat{t}(L)$ and $t(L)$ are braid invariants, when viewed as elements of $\widehat{\HFK}(-M,L;\Z_2)$ and $\HFK^-(-M,L;\Z_2)$ respectively. Tovstopyat-Nelip further notes that the BRAID invariant can be thought of as a well defined class in $\widetilde{\HFK}(-M,L;\Z_2)$, a version of knot Floer homology defined for links encoded by multiple basepoints~\cite{tovstopyat2018transverse}. We note that $\widehat{t}(L)$ is in fact well defined in $\widehat{\HFL}(-M,L;\Z_2)$ complete with all Alexander gradings. We note also that we may conflate $\widehat{t}(L)$ and the associated generator of the chain complex in the section~\ref{linkcontact}.

\begin{figure}[h]
    \centering
    \includegraphics[width=7cm]{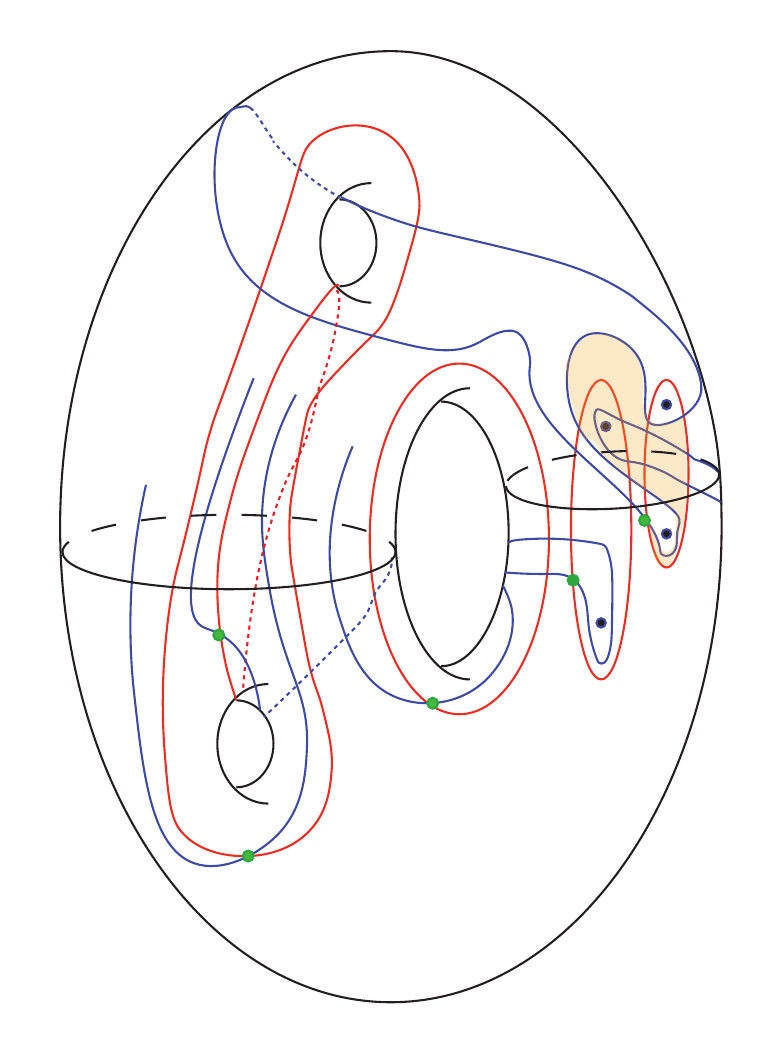}
    \caption{A Heegaard diagram adapted to a pointed open book $(\Sigma,\phi,\{p_i\})$ and basis of arcs $\{a_j\}$. Note that we have only shown segments of some of the $\beta$ curves. The green dots indicate the intersection points that represent the BRAID invariant. The orange region is the shadow of a pseudo-holomorphic disk which plays a role in Section~\ref{linkcontact}. The blue dots in the lower half of the surface are $w_i$ basepoints, which come from the points $p_i$ in the pointed open book. The basepoints in the upper half of the surface are $z_i$.  Note that not all of the basepoints are shown in the figure.
    }\label{BRAIDDISK}
    \end{figure}
	
	The BRAID invariant has some strong non-vanishing properties. The strongest of these, a result due to Tovstopyat-Nelip, is that $\widehat{t}(B\cup K)\neq 0$ where $B$ is a link braided about a fibered link $K$~\cite[Theorem 1.3]{tovstopyat2018transverse}. Note that for $\widehat{t}(B\cup K)$ to be well defined, we implicitly push $K$ transversely off itself, so that $B\cup K$ is braided with respect to $(\Sigma,\phi)$. Note that Tovstopyat-Nelip's result generalises earlier work of Vela-Vick~\cite{vela2011transverse} and Vela-Vick-Etnyre~\cite{etnyre2010torsion}. In the case that $B=\emptyset$, the Alexander grading of $\widehat{t}(B\cup K)$ is readily computed~\cite[Lemma 4.2]{tovstopyat2018transverse}. 
	  
 We now discuss some mapping-class group theoretic properties of braids which will be of importance in Section~\ref{linkcontact}. Given a pointed open book $(\Sigma,\phi,\{p_i\})$ there are two notions of an arc being sent to the right. We introduce a new terminology to distinguish these two notions. We say an arc $a\subseteq\Sigma$ is \emph{relatively sent to the right} by $\phi$ if after $\phi(a)$ has been isotoped in $\Sigma-\{p_i\}$ so that $\phi(a)$ and $a$ intersect minimally, $\phi(a)$ is to the right of $a$. This is the notion of being sent to the right used in~\cite{baldwin2013equivalence}. Note that if a pointed open book $(\Sigma,\phi,\{p_i\})$ representing a braid $L$ has an \textit{essential} arc which is relatively sent to the left then $\widehat{t}(L)$ and $t(L)$ are both trivial in the homology (\cite[Theorem 1.4]{baldwin2013equivalence}). We say that a pointed open book is \emph{relatively right veering} on a component of $\partial\Sigma$ if every arc which intersects that component of $\partial\Sigma$ is relatively sent to the right.
 
 We say that $a$ is \emph{absolutely sent to the right} if $a$ is sent to the right after isotoping $\phi(a)$ in $\Sigma$ -- in particular $\phi(a)$ is allowed to pass over basepoints under the isotopy. We say that a pointed open book is \emph{absolutely right veering} on a component of $\partial\Sigma$ if every arc which intersects that component of $\partial\Sigma$ is absolutely sent to the right. The distinction between these two concepts plays an important role in the next section. Similar definitions also apply for notions of left veering.

 For the purposes of this paper we do not count arcs that are fixed up to isotopy as right or left veering. That is we have the following trichotomy: arcs are either (relatively) sent to the left, right, or fixed up to isotopy.
 
 We conclude this section by noting that, while the braid invariant was originally defined in homology theories with coefficients in $\Z_2$, it is in fact well defined up to sign with coefficients in $\Z$. In particular, if $L$ is a fibered link, the BRAID invariant generates the top Alexander grading of $\widehat{\HFK}(L;\Z)$ or $\widehat{\HFK}(L;\Q)$.
	\end{section}

\begin{section}{A Rank Bound from the BRAID invariant}\label{linkcontact}

	Baldwin-Vela-Vick proved that the knot Floer homology of a fibered knot is non-trivial in the next to top Alexander grading \cite{baldwin_note_2018}. In this section we generalise their techniques to obtain a number of related results. In particular we show:
	
	\begin{theorem}\label{rankinnequalityforfiberedlinks}
	 Let $L$ be a non-trivial $n$ component fibered link in a rational homology sphere,  then: \[\rank(\widehat{\HFK}(L,\dfrac{n-2-\chi(L)}{2}))\geq n.\]
	\end{theorem}
	
	This implies a generalization of a Theorem of Ni~\cite[Theorem A.1]{ni2020exceptional}:
	
	\begin{corollary}\label{Nigeneralized}
	Suppose $L$ is a non-trivial fibered $n$-component link in a rational homology sphere, with monodromy neither relatively right veering nor relatively left veering on each component. Then \[\rank(\widehat{\HFK}(L,\dfrac{n-2-\chi(L)}{2}))\geq 2n.\]
	\end{corollary}

Before proceeding to the technical lemma which underlies these results, we fix some notation. $\partial_T$ shall denote the component of the total differential on $\widehat{\CFL}(L)$, namely the one for which $H_*(\widehat{\CFL}(L),\partial_T)\cong\widehat{\HF}(M)\otimes V^{n-1}$, where $M$ is the underlying rational homology sphere and $n$ is the number of components of $L$. $\partial_{\widehat{\CFL}}$ will denote the link Floer Homology differential; i.e. $H_*(\widehat{\CFL}(L),\partial_{\widehat{\CFL}})\cong\widehat{\HFL}(L)$.

\begin{lemma}\label{chainlevel}
    Suppose $(\Sigma,\phi,\{p_i\})$ is a pointed open book representing a braid $L$ in a rational homology sphere $M$. Suppose $\phi$ sends an essential arc $a\subset(\Sigma,\{p_i\})$ absolutely to the left. Then there is a generator $\mathbf{d}\in\widehat{\CFL}(-M,\overline{L},[\Sigma])$ with 
   $\partial_T\mathbf{d}=\widehat{t}(L)$ 
    and $\partial_{\widehat{\CFL}}(\mathbf{d})=0$. 
\end{lemma}

To prove this Lemma~\ref{chainlevel} we explicitly find the generator $\mathbf{d}$. We then show that $\partial_T\mathbf{d}=\widehat{t}(L)$ using a diagrammatic argument, and note that it is easy to determine the multi-Alexander grading of $\mathbf{d}$ relative to that of $\hat{t}(L)$.

\begin{proof}[Proof of Lemma~\ref{chainlevel}]

Fixing notation as in the statement of the lemma, we extend $a$ to a basis of arcs $\{a_j\}$ for $(\Sigma,\phi,\{p_i\})$. Then we isotope $\phi(a)$ to a curve $b$ in $-\Sigma$ so that $a$ and $b$ intersect minimally. Consider the surface $\Sigma\cup-\Sigma$, and the alpha and beta curves -- $\alpha,\beta$ -- associated to $a$ in the Heegaard diagram adapted to $((\Sigma,\phi)$, $\{a_j\})$. Note that since $a$ is absolutely sent to the left there is an intersection point $d\in\alpha\cap \beta\cap-\Sigma$ and a bigon in $\Sigma\cup(-\Sigma)$ with corners at $c\in\alpha\cap\beta,\in\widehat{t}(L)$ and $d$ and edges contained in $\alpha$ and $\beta$.

Consider now $b'$, the arc formed after isotoping $\phi(a)$ in $(-\Sigma,\{p_i\})$ so that $a$ and $b'$ intersect minimally. Let $\beta'$ be the associated beta curve in the adapted Heegaard diagram. We claim that there is a map $B:\{z\in\C:|z|\leq 1\}\to \Sigma\cup-\Sigma$ with $B(\{z:|z|=1,\Re(z)\leq 0\})\subseteq \beta'$, $B(\{z:|z|=1,\Re(z)\geq 0\})\subseteq \alpha$ whose image, counted with multiplicity, is a linear combination of regions $(\Sigma\cup-\Sigma)\backslash(\cup_i(\alpha_i)\cup\cup_i(\beta'_i))$. To see this let $b_t:[0,1]\times[0,1]\to\Sigma$ be an isotopy from the arc $b_0=b$ to the arc $b_1=b'$ (where we view arcs as smooth embedding $b,b':[0,1]\to\Sigma$). Note that there is a continuous map $[0,1]\to\{0,1\}$ which takes the value $0$ if the image of $b_t$, counted with multiplicity is a linear combination of regions $(\Sigma\cup-\Sigma)\backslash(\alpha\cup\beta_t)$, where $\beta_t$ are the $\beta$ curves obtained from $b_t$. Examples of such maps of bigons can be seen shaded in orange in Figure~\ref{BRAIDDISK}, or in Figure~\ref{fiberedBRAID}. The claim follows from continuity. We note that the image of $B$ may have multiplicity strictly greater than one at certain points.

Let $d'=B(i)\in-\Sigma$, as shown in Figure~\ref{fiberedBRAID}. Consider the generator of $\widehat{\CFL}(-M, \overline{L})$, $\mathbf{d}$, given by $\{d',c_2,\dots c_{2g+n-1}\}$. Here the intersection points $c_i$ are intersection points of the remaining $\alpha_i$ and $\beta_i$ curves in $\Sigma$. $\langle\partial_T\mathbf{d},\widehat{t}(L)\rangle\neq 0$, as witnessed by the bigon $B$, which is the shadow of a pseudo-holomorphic disk in $\Sym^{|\alpha|}(\Sigma\cup-\Sigma)$, where $|\alpha|$ is the number of $\alpha$ curves. There is no other pseudo-holomorphic disk from $\mathbf{d}$ to $\widehat{t}(L)$ -- emanating to the north east of $d'$ as opposed to south west, in Figure~\ref{BRAIDDISK}, or in Figure~\ref{fiberedBRAID} -- as this would imply that $a$ and $\phi(a)$ were isotopic, contradicting the hypothesis that $a$ is absolutely sent the left.

Finally $\partial_{\widehat{\CFL}} \mathbf{d}=0$, as else $a$ and $b'$ would not intersect minimally as arcs in $(-\Sigma,\{p_i\})$.

\end{proof}

We note that in Lemma~\ref{chainlevel}, the multi-Alexander grading of $\mathbf{d}$ is determined by the multi-Alexander grading of $\hat{t}(L)$ and the number of $w_i$ basepoints and their respective multiplicity, $n_i$, in the image of the map $B$. Let $x_k$ denote the $k$th Alexander grading of the BRAID invariant of a braid $L$. It follows that $\mathbf{d}$ has $A_k$ grading given by $x_k+n_k$ for all $k$.

\begin{figure}[h]
    \centering
    \includegraphics[width=6cm]{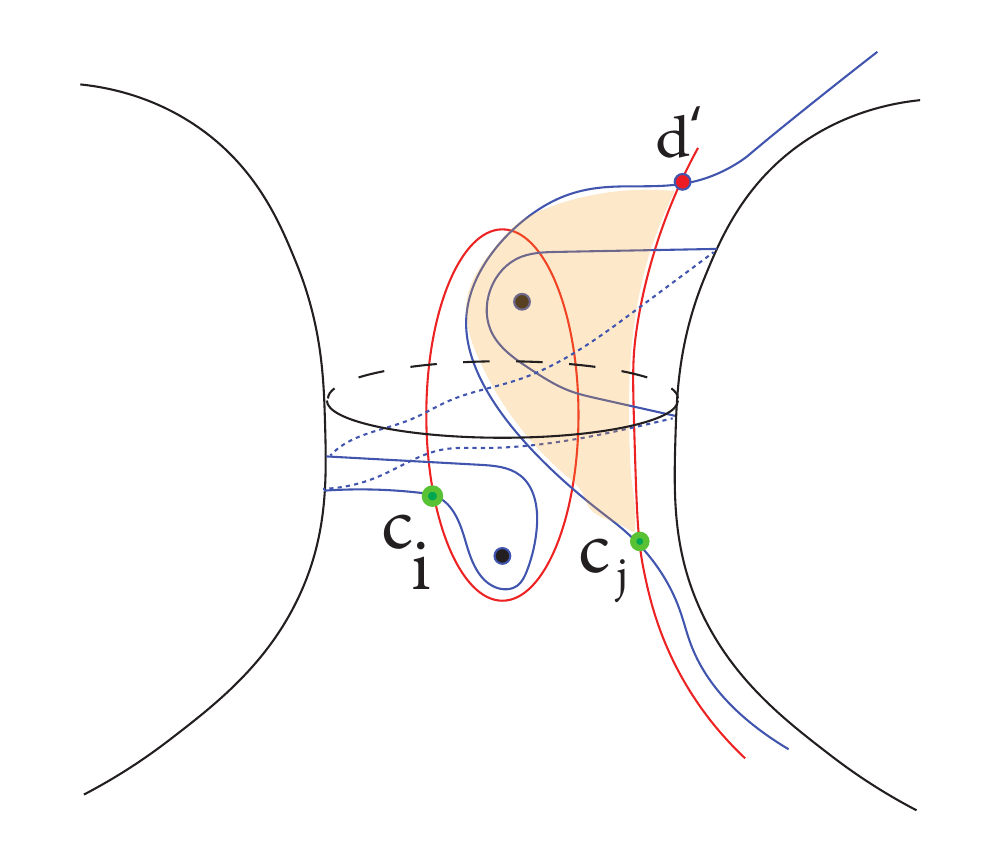}
    \caption{A picture of part of the Heegaard diagram used to define the BRAID invariant for the binding of an open book, via a transverse push-off. The BRAID invariant includes the green dots $c_i$ and $ c_j$. As described in Lemma~\ref{generalnonzero}, $\mathbf{d}'$ includes $c_i$, but not $c_j$, instead including $d$. The orange region is the shadow of a pseudo-holomorphic disk from $\mathbf{d}$ to the BRAID invariant. The black dots are basepoints.}\label{fiberedBRAID}
    \end{figure}

We now prove a more general version of Theorem~\ref{rankinnequalityforfiberedlinks}.
\begin{lemma}\label{generalnonzero}
  With the hypotheses of Lemma~\ref{chainlevel} together with the additional hypothesis that $\widehat{t}(L)\neq 0$ we have that $\rank(\bigoplus_{\mathbf{x}\in I}\widehat{\HFL}(L,\mathbf{x}))\geq 1$ where here: $$I=\{(A_1,A_2,\dots, A_{n})| x_j\leq A_j\leq x_j+n_j, \text{for all $j$}, \text{ and }\exists i\text{ such that } A_i\neq x_i\}.$$
  
  and $n$ is the number of components of $L$ and we take coefficients in a field.
\end{lemma}

\begin{proof}
    We proceed using the same notation as in Lemma~\ref{chainlevel}. Note that $\mathbf{d}$ is a cycle in $(\widehat{\CFL}(L),\partial_{\CFL})$. If $\mathbf{d}\neq \partial_{\widehat{\CFL}}\mathbf{e}$ for any $\mathbf{e}$, the result follows. Suppose then that there is an $\mathbf{e}$ with $\partial_{\widehat{\CFL}}\mathbf{e}=\mathbf{d}$.
    
   Observe that since we are working with coefficients in a field $(\widehat{\CFL}(L),\partial_{T})$ has a model in which $\widehat{\CFL}(L)\cong\widehat{\HFL}(L)$ (See \cite[Reduction Lemma]{hedden2018geography}) in every Alexander grading except for that of $\mathbf{d}$, $\widehat{t}(L)$. Since $\partial_T(\mathbf{d})=\widehat{t}(L)\neq 0$ and  $\partial_T^2\mathbf{e}=0$ 
    we have that $\partial_T(\mathbf{e})-\mathbf{d}\neq 0\in\bigoplus_{\mathbf{x}\in J}\widehat{\CFL}(L,\mathbf{x})$ where $$J=\{(A_1,A_2,\dots, A_{n})| x_j\leq A_j\leq x_j+n_j,  \text{for all $j$, and }\exists i\text{ such that } A_i\neq x_i+n_i\}.$$
    
    Suppose towards a contradiction that $\rank(\bigoplus_{\mathbf{x}\in I}\widehat{\HFL}(L,\mathbf{x}))=0$. Then in fact we have that $\partial_T(\mathbf{e})-\mathbf{d}\in\widehat{\HFL}(L,(x_1,x_2,\dots x_n))$. It follows that $\partial_T(\partial_T(\mathbf{e})-\mathbf{d})=\partial_{\widehat{\CFL}}(\partial_T(\mathbf{e})-\mathbf{d})=\widehat{t}(L)$, contradicting the fact that $\widehat{t}(L)\neq 0$.
\end{proof}

\begin{remark}
Suppose $(\Sigma,\phi,\{p_i\})$ is a pointed open book for a link $L$ in a rational homology sphere $M$ such that neither $\phi$ nor $\phi^{-1}$ sends an essential arc absolutely to the left. Then $\phi$ fixes every essential arc and is in fact the identity. In this case, $L$ is an unlink in $M$. The assumption on the arcs used in Lemma~\ref{chainlevel} and Lemma~\ref{generalnonzero} is therefore only the mildest of restrictions, at least up to mirroring.

\end{remark}

We now specialise Lemma~\ref{generalnonzero} to prove Theorem~\ref{rankinnequalityforfiberedlinks}.

\begin{proof}[Proof of Theorem~\ref{rankinnequalityforfiberedlinks}]

An $n$ component fibered link $L$ arising as the binding of an open book $(\Sigma,\phi)$ may be perturbed to yield an $n$-braid $B$ in the complement of $L$, with associated pointed monodromy $(\Sigma,\phi')$. Consider the $m$th boundary component of $\Sigma$, $P_m$. After mirroring we may assume that $\phi$ sends some non-separating arc $a$ with an endpoint on $P_m$ absolutely to the left or fixes some non-separating arc. Extend this arc to a basis for $(\Sigma,\phi')$, and consider $\widehat{t}(B)$.

Suppose $a$ is fixed up to isotopy. Then $M$ contains an $S^1\times S^2$ summand and hence cannot be a rational homology sphere, a contradiction.

Suppose $a$ is sent absolutely to the left. Then $\widehat{t}(B)\neq 0$ by~\cite[Theorem 1.1]{tovstopyat2018transverse}. Suppose $\widehat{t}(B)$ has multi-Alexander grading $(A_1,A_2,\dots, A_{n})$. The bigon obtained via~\ref{chainlevel}, and pictured in Figure~\ref{fiberedBRAID} in this special case, contains a single $w_m$ basepoint. Thus applying Lemma~\ref{generalnonzero}, and noting that in this special case the hypothesis that we work with a field is unnecessary,  yields a non-trivial generator in multi-Alexander grading $(A_1,A_2,\dots, A_{m}-1,\dots A_{n})$.

Applying this procedure for each boundary component yields a total of $n$ distinct generators with Alexander-multi grading $(A_1,A_2,\dots, A_n)$ satisfying $A_1+A_2+\dots A_n=\dfrac{n-2-\chi(L)}{2}$, whence the result follows from the fact that knot Floer homology is obtained by collapsing the multi-Alexander grading to a single grading.
\end{proof}

\begin{proof}[Proof of Corollary~\ref{Nigeneralized}]
 View $L$ as an $n$-braid as in the proof of Theorem~\ref{rankinnequalityforfiberedlinks}. Suppose $\widehat{t}(L)$ is of multi-Alexander grading $(A_1,A_2,\dots A_n)$. Theorem~\ref{rankinnequalityforfiberedlinks} implies that for all $i$: $$\rank(\widehat{\HFL}(L;(A_1,\dots A_i-1,\dots A_n)))\geq1.$$ Suppose there exists an $i$ for which $\rank(\widehat{\HFL}(L;(A_1,\dots A_i-1,\dots A_n)))=1$. Consider $\CFL^-(L)$, together with the differential $\partial^{-}_T$ which counts all pseudo-holomorphic disks. Note that $\CFL^-(L)$ can be thought of as a $((A_1,A_2,\dots A_n),(j_1,j_2,\dots j_n))$-graded complex where the $j_i$ gradings are the $U_i$ gradings, so that the action of $U_i$ decreases the $j_i$ and $A_i$ grading by $1$ and preserves the other gradings. Since $L$ is fibered, there is a unique generator $x$ of $\CFL^{-}(L)$ with maximal $A_1+A_2+\dots +A_n$ grading and $j_i=0$ for all $i$. Since $\rank(\widehat{\HFL}(L;(A_1,\dots A_i-1,\dots A_n)))=1$, we may take there to be a unique generator $y$ of $\CFL^-(L)$ with Alexander multi-grading $(A_1,A_2,\dots A_i-1,\dots,A_n)$, and again $j_i=0$ for all $i$. 
 
 Since $\phi$ sends an arc in $\Sigma$ with endpoint on $L_i$ to the left there is a component of $\partial_T^-$ from $x$ to $y$. Note that this property is preserved by filtered chain homotopy. Since $\phi$ sends an arc in $\Sigma$ with endpoint on $L_i$ to the right there is a component of $\partial_T^-$ from $y$ to $U_ix$. Note again that this property is preserved by filtered chain homotopy. It follows from the proof of Theorem \ref{rankinnequalityforfiberedlinks} that $\langle(\partial_T^{-})^2(x),Ux\rangle\neq 0$, a contradiction. Thus $\rank(\widehat{\HFL}(L;(A_1,\dots A_i-1,\dots A_n)))\geq 2$ for all $i$ and the result follows from the fact that knot Floer homology is obtained from link Floer homology by collapsing the multi-Alexander grading to a single Alexander grading.
\end{proof}

For Section~\ref{Knot Floer Homology Detects $T(2,8),T(2,10)$} and Section~\ref{Detection Results for Khovanov Homology} it will in fact be helpful to have the following algebraic generalization of Corollary~\ref{Nigeneralized}:

\begin{proposition}~\label{prop:algebraicni}
    Suppose $L$ is a link such that:\begin{enumerate}
        \item $\rank(\widehat{\HFL}(L,(A_1+1,A_2,\dots A_n)))=0$
        \item $\rank(\widehat{\HFL}(L;(A_1,\dots A_n)))=1$, with $x$ a generator of $\widehat{\HFL}(L;(A_1,\dots A_n))$ and $\bar{x}$ a generator of $\widehat{\HFL}(L;(-A_1,\dots -A_n))$, such that $\partial_T x$ has a non-trivial component in $\widehat{\HFL}(L;(A_1-1,A_2\dots A_n))$
        
        \item there exists a generator $y$ of $\widehat{\HFL}(L;(1-A_1,-A_2,\dots -A_n))$ such that $\langle \partial_T y,\bar{x}\rangle\neq 0$.
    \end{enumerate} 

    Then $\rank(\widehat{\HFL}(L;(A_1-1,A_2\dots, A_n)))>1$.
\end{proposition}

Here $\partial_T$ denotes the total differential on $\widehat{\CFL}(L)$. We have stated the proposition for $A_1$ only for ease of notation, a more general version readily follows by permuting the ordering of the components of $L$. Here recall that $\CFL^-(L)$ can be thought of as a $((A_1,A_2,\dots A_n),(j_1,j_2,\dots j_n))$-graded complex, where the $j_i$ gradings are the $U_i$ gradings, where the action of $U_i$ decreases the $j_i$ and $A_i$ gradings by $1$ and preserves the other gradings. Recall that one can view $\widehat{\CFL}(L)$ as the sub-complex of $\CFL^-(L)$ with $j_i = 0$ for $i=1,2, \cdots n$ or alternately as the sub-complex of $\CFL^-(L)$ with $(A_1,A_2,\dots A_n)$ fixed, if all $A_i$ are sufficiently negative. Let $\partial^-_T$ again denote the total differential on $\CFL^-(L)$, that counts all pseudo-holomorphic disks.

\begin{proof}
    Consider a reduced model of $\CFL^-(L)$ i.e. one such that the grading preserving component of $\partial_T^-$ is trivial. This is permissible because the filtered chain homotopy type of 
 $\CFL^-(L)$ is a link invariant and one can apply a filtered change of basis to ensure that the grading preserving component of $\partial_T^-$ is trivial -- see~\cite[Reduction Lemma]{hedden2018geography} in the knot case and note that the proof carries through to the link case. Observe that there is a generator \begin{align*}U_1x\in \CFL^-(L;(A_1-1,A_2,\dots A_n),(-1,0,\dots 0))\end{align*} 
    
     Since $\rank(\widehat{\HFL}(L;(A_1-1,\dots A_n)))\geq 1$, \begin{align*}{\rank(\CFL^-(L;(A_1-1,\dots A_n),(0,0,\dots ,0)))\geq 1}.\end{align*} Suppose towards a contradiction that \begin{align*}\rank(\widehat{\HFL}(L;(A_1-1,\dots A_n)))=1\end{align*} so that \begin{align*}\CFL^-(L;(A_1-1,\dots A_n),(0,0,\dots ,0))\end{align*} is rank one with a generator $\bar{y}$. Observe that by assumption 2) there is a component of $\partial_T^-$ from $x$ to $\bar{y}$. Note that this property is preserved under filtered chain homotopy.
    
     We claim that there is also a component of $\partial_T^-$ from $\bar{y}$ to $U_1x$ as a byproduct of the assumption (3) in the statement of the Proposition. 

     To verify the claim, note that in any fixed Alexander multi-grading $(k,k,\dots k)$, with $k$ sufficiently negative, $\CFL^-(L)$ is given by a filtered chain homotopy equivalent copy of $\widehat{\CFL}(L)$, with the role of the (positive) $A_i$ gradings in $\widehat{\CFL}(L)$ is interchanged with the roles of the (negative) $j_i$ gradings in $\CFL^-(L;\{a_i=k\text{ for all Alexander gradings }a_i\})$ -- see the proof of~\cite[Proposition 8.1]{HolomorphicdiskslinkinvariantsandthemultivariableAlexanderpolynomial}. Forgetting $\partial_T^-$, the isomorphism can be given at the level of the underlying vector space by mapping a generator in Alexander multi-grading $\CFL^-(L;(x_1,x_1,\dots x_n);(0,0,\dots 0))$ to $\CFL^-(L;(k,k,\dots k);(k-x_1,k-x_2,\dots k-x_n))$ by $z\mapsto\underset{1\leq i\leq n}{\prod}U^{x_i-k}_iz$. Indeed, under this isomorphism, in $\CFL^-(L)$ the line through $((-A_1,-A_2,\dots,-A_n),(0,0,\dots,0))$ in the (positive) $A_1$ direction is identified with the line through $((k,k,\dots k),(k-A_1,k-A_2,\dots k-A_n))$ in the (negative) $j_1$ direction. In particular, $\CFL^-(L;(k,k,\dots k),(k-A_1+1,k-A_2, \dots k-A_n))$ is generated by:
     
     \begin{align*} U_1^{-1}\underset{1\leq i\leq n}{\prod}U_i^{A_i-k}\bar{y} \end{align*}
     
     while $\CFL^-(L;(k,k,\dots k),(k-A_1,k-A_2,\dots k-A_n))$ is generated by
     \begin{align*}
    \underset{1\leq i\leq n}{\prod}U_i^{A_i-k}x.\end{align*}
       
       It follows from condition (3) that there is a component of $\partial^-_T$ from $U_1^{-1}\underset{1\leq i\leq n}{\prod}U_i^{A_i-k}\bar{y} $ to $\underset{1\leq i\leq n}{\prod}U_i^{A_i-k}x$. It follows in turn that there is a component of the differential from $\bar{y}$ to $U_1x$, as desired.
    
    Now, since $\rank(\widehat{\HFL}(L;(A_1+1,A_2,\dots A_n)))=0$, we have that \begin{align*}
        \rank(\CFL^-(L;(A_1,A_2,\dots A_n),(-1,\dots 0)))=0
    \end{align*}

    It follows that $\langle(\partial^-_T)^2x',x\rangle\neq 0$, contradicting $(\partial_T^-)^2=0$.
\end{proof}

	We conclude this section by noting that it is perhaps useful to view Theorem~\ref{rankinnequalityforfiberedlinks} in the context of the following more general proposition:

\begin{proposition}\label{generalrankbound}
Suppose $L$ is an $n>1$ component non-trivial link in a rational homology sphere such that \\ $\rank(\widehat{\HFL}(L,(x_1,x_2,\dots x_n))=2m+1$ for some $m\in \mathbb{N}$. Then $\rank(\widehat{\HFL}(L))\geq 2^{n}+2m$.
\end{proposition}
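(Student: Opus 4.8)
The plan is to reduce the statement to a parity count on the multi-graded ranks of $\widehat{\HFL}(L)$ and then feed that count into an elementary combinatorial lemma. First I would record that $\rank\widehat{\HFK}(L)=\rank\widehat{\HFL}(L)$: by the discussion in Section~\ref{A Brief Review of Link Floer homology} the former is obtained from the latter merely by projecting the multi-Alexander grading onto the diagonal and shifting the Maslov grading by $\tfrac{n}{2}$, and neither operation changes the underlying vector space. Writing $r(\vec A)=\rank\widehat{\HFL}(L)(\vec A)$ and $f(\vec A)=r(\vec A)\bmod 2$, regarded as a finitely supported $\F_2$-valued function on the relevant coset of $\Z^n$, it then suffices to prove that $f$ is supported on at least $2^n$ multi-gradings. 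Indeed, since $r(\vec x)=2m+1$ and $r\ge 1$ at each of the remaining $\ge 2^n-1$ points of $\operatorname{supp}(f)$, summing the ranks over $\operatorname{supp}(f)$ yields $\rank\widehat{\HFL}(L)\ge (2m+1)+(2^n-1)=2^n+2m$.

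The key input is a \emph{parity lemma}: for every component index $i$ and every fixed choice $(A_j)_{j\ne i}$ of the remaining gradings, the slice sum $\sum_{A_i} f(\vec A)$ vanishes in $\F_2$; equivalently, every axis-parallel line meets $\operatorname{supp}(f)$ in an even number of points. To prove this I would use that the dimension of a Maslov-graded space is congruent mod $2$ to its Euler characteristic, so $\sum_{A_i} r(\vec A)\equiv \sum_{A_i}\chi\big(\widehat{\HFL}(L)(\vec A)\big)\pmod 2$. By Ozsv\'ath--Szab\'o's computation of the graded Euler characteristic of link Floer homology~\cite{HolomorphicdiskslinkinvariantsandthemultivariableAlexanderpolynomial}, the generating function $\sum_{\vec A}\chi\big(\widehat{\HFL}(L)(\vec A)\big)\prod_k t_k^{A_k}$ equals $\pm\,\Delta_L(t_1,\dots,t_n)\prod_{k=1}^n (t_k^{1/2}-t_k^{-1/2})$. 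Since $n>1$ this product contains the factor $t_i^{1/2}-t_i^{-1/2}$, which vanishes upon setting $t_i=1$; hence the whole generating function vanishes at $t_i=1$, and extracting the coefficient of $\prod_{j\ne i}t_j^{A_j}$ shows $\sum_{A_i}\chi\big(\widehat{\HFL}(L)(\vec A)\big)=0$ for every fixed slice. As a cross-check one can instead invoke the spectral sequence $\widehat{\HFL}(L)\Rightarrow\widehat{\HFL}(L-L_i)\otimes V$ from Section~\ref{A Brief Review of Link Floer homology}: its target has even rank in each grading because $V$ has rank $2$, and since the spectral sequence changes rank by an even number on each graded piece, the source has even rank on each slice as well.

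It then remains to prove the \emph{combinatorial lemma}: a nonzero finitely supported $f\colon\Z^n\to\F_2$ all of whose axis-parallel line sums vanish satisfies $|\operatorname{supp}(f)|\ge 2^n$. I would argue by induction on $n$. For $n=1$, a single nonzero value would make the line sum equal to $1$, so the support has at least two points. For the inductive step, decompose $f$ along the last coordinate into slices $f_c\colon\Z^{n-1}\to\F_2$; the line-sum conditions in the first $n-1$ directions restrict to the same conditions on each $f_c$, so every nonzero slice has support at least $2^{n-1}$ by induction, while the line-sum condition in the last direction reads $\sum_c f_c=0$, which rules out there being exactly one nonzero slice. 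Thus at least two slices are nonzero, giving $|\operatorname{supp}(f)|\ge 2\cdot 2^{n-1}=2^n$. Applying this to our $f$, which is nonzero because $f(\vec x)=1$, closes the count of the first paragraph. (This is exactly the statement that the minimum distance of the $n$-fold tensor power of the even-weight code is $2^n$, which one could cite instead.)

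The main obstacle is the parity lemma, and more precisely pinning down the normalization of the Euler-characteristic formula so that the slice sums genuinely vanish for $n>1$ — the combinatorial lemma and the reduction to $\widehat{\HFL}$ are routine. I would also remark that non-triviality of $L$ is what keeps the hypothesis from being vacuous: for any split link (in particular an unlink) $\Delta_L=0$, forcing every $\chi\big(\widehat{\HFL}(L)(\vec A)\big)$, and hence every value of $f$, to vanish, so that no multi-grading can carry odd rank. Thus the whole content lives in links for which some multi-Alexander grading already supports odd rank, and for those the argument above applies verbatim.
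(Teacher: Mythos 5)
Your proof is correct and takes essentially the same approach as the paper's: evenness of the total rank of $\widehat{\HFL}(L)$ along axis-parallel slices forces at least $2^{n}$ multi-gradings of odd rank, and summing the ranks over these gradings yields $(2m+1)+(2^{n}-1)=2^{n}+2m$. Your slice-by-slice induction on the mod-$2$ rank function is a clean formalization of the paper's recursive hyperplane count, and your justification of the parity input via the factorization $\chi(\widehat{\HFL}(L))=\pm\Delta_L\prod_k(t_k^{1/2}-t_k^{-1/2})$ (or via the spectral sequence to $\widehat{\HFL}(L-L_i)\otimes V$) supplies a detail that the paper simply asserts.
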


For instance, this proposition yields non-trivial lower bounds on the ranks of fibered links of two or more components -- they must have rank at least $2^n$. Note that this improves the lower bound $\rank(\widehat{\HFL}(L))\geq 2^{n-1}$ coming from the spectral sequence from $\widehat{\HFL}(L)$ to $V^{n-1}$.

\begin{proof}[Proof of Proposition~\ref{generalrankbound}]
   Suppose $L$ is an $n>1$ component link non-trivial such that $\rank(\widehat{\HFK}(L,(x_1,x_2,\dots x_n)))$ $=2m+1$. For each choice of $i$ there exist an odd number of generators with $A_j$ grading $x_j$ for every $i\neq j$, and $A_i\neq x_j$, since $\widehat{\HFL}(L)$ must have even rank in each such hyperplane. Recursively, we can find additional generators for each choice of hyperplane defined by $A_i=x_i$ for $i$ in any subset of $\{1,2,\dots n\}$. Since each of the prior stages have odd number of generators, we obtain an additional odd numbers of generators at each stage -- again since the rank of $\widehat{\HFL}(L)$ must be even. Thus we obtain at least an additional $2^n-1$ generators.
\end{proof}

Note that if $L$ is an $n$ component link and $\rank(\widehat{\HFK}(L,A))$ is odd for some Alexander grading $A$, then there is a multi-Alexander grading $(A_1,A_2\dots A_n)$ such that $\rank(\widehat{\HFL}(L,A_1,A_2,\dots A_n))$ is odd, where $A=A_1+A_2+\dots A_n$ whence it follows that $\rank(\widehat{\HFK}(L))\geq 2^n$.

\end{section}

\begin{section}{Detection Results for $(2,2n)$-cables}\label{Detection Results for $(2,2n)$-cables}

In this section we provide detection results for the simplest $2$ component cables of the trefoils and figure eight knot. Some care with coefficients is required.

\begin{theorem}\label{HFL2detectsRHTcables}
Link Floer homology with $\Z_2$ coefficients detects $T(2,3)_{2,2n}$ for all $n$.
\end{theorem}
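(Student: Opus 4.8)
The plan is to prove that link Floer homology detects the $(2,2n)$-cable of the right-handed trefoil, which I will write as $C = T(2,3)_{2,2n}$, by combining the structural properties of $\widehat{\HFL}$ reviewed in Section~\ref{A Brief Review of Link Floer homology} with the rank bounds of Section~\ref{Rank bounds in link Floer homology}. Let me think about what invariants of $C$ are forced to be shared by any link $L$ with $\widehat{\HFL}(L)\cong\widehat{\HFL}(C)$. First, $\widehat{\HFL}$ determines the Thurston polytope of the link exterior, the total rank, the multi-Alexander graded ranks, and via the Euler characteristic the multivariable Alexander polynomial (hence the linking number of the two components, by the Hoste result). It also detects whether each component is fibered and the genus of each component via the $\widehat{\HFK}$ of sublinks obtained from the component-deleting spectral sequences.

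So the first step would be to catalog these invariants for $C$ explicitly. The cable $C$ has two components, each a copy of $T(2,3)$ (the companion knot), with linking number $2n$ coming from the cabling. Each component is a fibered knot of genus one, and the whole link is fibered. I would compute $\widehat{\HFL}(C)$ directly — or at least its graded ranks and polytope — perhaps using a genus/fiberedness argument together with the known $\widehat{\HFK}(T(2,3))$ and the cabling formulas, and record the maximal Alexander gradings in each variable together with the ranks there.

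The strategy for the detection itself would then be: given $L$ with $\widehat{\HFL}(L)\cong\widehat{\HFL}(C)$, first show $L$ is a two-component link with the correct linking number, both components fibered of genus one, and the total complex fibered. The key geometric input is Martin's fiberedness-and-braidedness criterion quoted in Section~\ref{A Brief Review of Link Floer homology}: if a component $L_i$ is fibered and $L$ is non-split, then $L-L_i$ is braided about $L_i$ precisely when $\widehat{\HFL}(L)$ has rank $2^{n-1}$ in the top $A_i$-grading. I would verify that $C$ realizes this extremal rank, conclude that each component of $L$ is a braid axis complement for the other, and then identify the braid closure. Because each component is a genus-one fibered knot, it must be a trefoil or figure eight; the sign and the linking number $2n$ then pin down the companion as the right-handed trefoil and the pattern as $T(2,2n)$, giving $L\cong C$.

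The hardest step, I expect, is going from the purely algebraic data (graded ranks, polytope, fiberedness, linking number, braidedness) to the honest topological identification of $L$ as exactly the cable $C$ rather than some other two-component fibered link with the same Floer data. Martin's braidedness criterion reduces this to recognizing a braid closure inside a solid-torus companion, and the genus-one fibered constraint on the companion restricts it to the trefoils or figure eight; ruling out the figure-eight companion (and the wrong chirality) will rely on matching the Maslov gradings and the sign of the linking number, which the Floer data determines but which requires care to extract. I would handle the companion-identification by invoking that knot Floer homology detects $T(2,3)$ and the figure eight, applied to the sublinks, together with the linking-number constraint to fix the number of strands and hence the pattern.
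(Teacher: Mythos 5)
There is a genuine gap, in two places. First, your mechanism for identifying $L$ as a cable does not work. You propose to use Martin's criterion to conclude each component is ``a braid axis complement for the other'' and then ``identify the braid closure inside a solid-torus companion,'' but when the fibered component $L_i$ is a trefoil, braidedness of $L-L_i$ about $L_i$ means transversality to the fibration of $S^3\setminus L_i$ --- it does not place the other component in a solid torus, and braidedness plus linking number plus genus-one fibered components does not certify that $L$ is a $2$-cable at all. The paper obtains the cable structure by a different route (Lemma~\ref{algebraiccharachterizationofcables}): since $\widehat{\HFL}(T(2,3)_{2,2n})$ is supported on the three lines $A_2=A_1$, $A_2=A_1\pm 1$, any $L$ with the same link Floer homology bounds an Euler-characteristic-zero surface (an annulus) after reversing the orientation of one component, and \emph{that} is what exhibits $L$ as $K_{2,2n}$; the maximal $A_i$ grading then forces $g(K)\leq 1$, rank $2$ at the top forces $K$ to be an unknot, trefoil, or figure eight, the unknot companion (i.e., the torus links $T(2,2n)$, which you never rule out) is excluded by the polytope vertex at $(1+\frac{n}{2},1+\frac{n}{2})$, and only then do the Maslov gradings of the two top-$A_i$ generators ($0,-1$ versus $1,2$ versus $0,1$) distinguish $T(2,3)$ from $T(2,-3)$ and the figure eight, roughly as in your last step. (Also note the linking number of the two components of $K_{2,2n}$ is $n$, not $2n$ as you state; the paper uses this via Hoste's theorem exactly as you intend.)

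Second, your uniform strategy fails outright at $n=0$, which the theorem includes. The components of $T(2,3)_{(2,0)}$ have linking number zero, so neither is braided about the other, the link is not fibered, and $\rank\bigl(\widehat{\HFL}\bigr)$ in the top $A_i$ grading is at least four --- so the ``extremal rank'' you plan to verify simply does not hold. The paper devotes a separate argument to this case (Lemma~\ref{(2,0)trefoilcable}): a computation of $\widehat{\HFK}(T(2,3)_{(2,0)})$ via Hanselman--Watson's cabling formula for the $(2,\pm 1)$-cables and the skein exact triangle, a case analysis showing a link with this knot Floer homology is either the cable or a disjoint union of an unknot with a specific genus-one knot (the latter ruled out at the link Floer level, since $\widehat{\HFL}(L)\cong\widehat{\HFL}(T(2,3)_{(2,0)})$ forbids an unknotted component), and mirror/duality arguments to exclude $T(2,-3)_{(2,0)}$ and the $(2,0)$-cable of the figure eight. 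Your proposal would need both the annulus-from-Thurston-norm step and a dedicated $n=0$ argument before the remaining outline (which otherwise parallels the paper's treatment of $n\neq 0$, including Lemma~\ref{genusof22ncables} for fiberedness and genus of the cables) could go through.
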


\begin{theorem}\label{HFLdetects(2,2n)cablesofthefigureeight}
Let $K$ be the figure eight knot. Link Floer homology with $\Z_2$ coefficients detects $K_{2,2n}$ for all $n$.
\end{theorem}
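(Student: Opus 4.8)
The plan is to run the pipeline already used for the right handed trefoil cables in Theorem~\ref{HFL2detectsRHTcables}, substituting the figure eight for the companion-identification step. Write $K$ for the figure eight and $J=K_{2,2n}$. This is a two component fibered link: each component $J_i$ is isotopic to $K$, the pattern $T(2,2n)$ is fibered in the solid torus, and $K$ is fibered of genus one, so $J$ fibers. From a cabling computation together with $\widehat{\HFK}(K)$ one obtains the total rank of $\widehat{\HFL}(J)$, its link Floer polytope, the pairwise linking number $\ell$, and the crucial fact that $\widehat{\HFL}(J)$ has rank $2^{2-1}=2$ in the maximal $A_i$ grading for each $i$.

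First I would pin down the coarse topology of any $L$ with $\widehat{\HFL}(L)\cong\widehat{\HFL}(J)$. The number of components and the pairwise linking number are detected by link Floer homology, so $L=L_1\cup L_2$ with linking number $\ell$; a split link would give a K\"unneth splitting of $\widehat{\HFL}(L)$ incompatible with the computed polytope, so $L$ is non-split. Since link Floer homology detects fiberedness and the link Floer polytope determines the Thurston polytope of the exterior \cite{ozsvath2008linkFloerThurstonnorm}, $L$ is fibered with the same Thurston norm as $J$.

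Next I would identify the pieces. A rank and Euler characteristic count forces the component spectral sequence from $\widehat{\HFL}(L)$ to $\widehat{\HFK}(L_i)\otimes V[\tfrac{\ell}{2}]$ to degenerate, so that $\widehat{\HFK}(L_i)\cong\widehat{\HFK}(K)$; since knot Floer homology detects the figure eight, each component $L_i$ is isotopic to $K$ and in particular is fibered. Feeding ``$L_i$ fibered, $L$ non-split'' into Martin's criterion (Section~\ref{A Brief Review of Link Floer homology}), the rank $2$ in the maximal $A_i$ grading forces $L-L_i$ to be braided about $L_i$ for each $i$. It then remains to upgrade ``each component is a figure eight and the two components braid about one another, with the fiber and Thurston norm of $J$'' to ``$L$ is the $(2,2n)$-cable of $K$'': the braiding exhibits $L_2$ on the boundary of a neighborhood of $L_1\cong K$, so $L$ is a satellite with companion $K$ whose pattern has the winding number and linking number of $T(2,2n)$, and the genus data recorded by the polytope pins the pattern down to $T(2,2n)$, giving $L\cong K_{2,2n}$.

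The main obstacle is this last geometric recognition step. Knowing abstractly that each component is a figure eight and that the components braid about one another does not by itself exhibit the cabling annulus, and one must rule out competing satellites with the same two sublinks, the same linking number, and the same Thurston polytope. Controlling the monodromy and exploiting the rigidity of the once-punctured-torus fiber of $K$ is the delicate point; moreover, because the figure eight is amphichiral, I would have to track orientations and mirror images carefully, since $K_{2,2n}$ can coincide with mirrors in ways that do not arise for the chiral trefoil.
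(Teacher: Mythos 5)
Your pipeline matches the paper's in outline (it runs the trefoil-cable argument with the figure eight substituted), but three steps fail as written. First, the case $n=0$ is silently excluded: the pattern $T(2,0)$ is not fibered in the solid torus, $K_{2,0}$ is not fibered (the paper's skein-triangle computation shows $\widehat{\HFK}(K_{2,0})$ has rank two in its top Alexander grading), the linking number is zero so neither component can be braided about the other, and the rank of $\widehat{\HFL}(K_{2,0})$ in the top $A_i$ grading is at least four, not two. The paper therefore treats $n=0$ by a separate argument, Lemma~\ref{20cablefigureeight}, computing $\widehat{\HFK}(K_{2,\pm 1})$ via Hanselman--Watson's immersed-curves cabling formula and pinning down $\widehat{\HFK}(K_{2,0})$ with the skein exact triangle. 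Second, your identification of the components is broken: the spectral sequence from $\widehat{\HFL}(L)$ to $\widehat{\HFK}(L_i)\otimes V$ does \emph{not} degenerate for these links, since $\rank(\widehat{\HFL}(K_{2,2n}))$ grows linearly in $|n|$ (already visible from the multivariable Alexander polynomial of the cable) and so exceeds $2\rank(\widehat{\HFK}(K))=10$; no rank or Euler characteristic count yields $\widehat{\HFK}(L_i)\cong\widehat{\HFK}(K)$. The paper instead bounds $g(L_i)\leq 1$ directly from the polytope, via $g(L_i)+\frac{n}{2}\leq\max\{A_i\}=1+\frac{n}{2}$, and uses rank two in the top $A_i$ grading to force each component to be genus-one fibered or unknotted; trefoil companions are then excluded not by figure-eight detection but by the Maslov gradings ($0$ and $1$) of the two generators in the top $A_i$ grading, which is the single modification the paper makes to the trefoil proof.

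Most importantly, the step you yourself flag as ``the main obstacle'' --- producing the cabling annulus --- is a genuine gap in your proposal, and the paper's resolution is entirely different: braiding is never used to recognize the cable. In Lemma~\ref{algebraiccharachterizationofcables}, condition $3$ (support of $\widehat{\HFL}(L)$ on the lines $A_2-A_1\in\{0,\pm 1\}$) forces the link obtained by reversing the orientation of one component to bound a Seifert surface of Euler characteristic zero; since the linking number is $n>1$, no component bounds a disjoint disk, so this surface is an annulus and $L$ \emph{is} a $(2,2n)$-cable outright, with companion of genus at most one. Martin's braid criterion enters only on the other side of the argument, when verifying that the honest cables $K_{2,2n}$ with $n\geq 1$ satisfy the hypotheses (rank two at the top $A_i$ grading, polytope vertex at $(1+\frac{n}{2},1+\frac{n}{2})$), together with Lemma~\ref{genusof22ncables} (Murasugi sums and Gabai's results) giving fiberedness and $\chi(K_{2,2n})=-2n$. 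So your plan of extracting the annulus from ``each component is a figure eight and the components braid about one another,'' plus monodromy rigidity, is both unnecessary and, as you concede, not closed; without the paper's annulus argument the proposal does not prove the theorem. (Your amphichirality worry, by contrast, is harmless for $n\neq 0$, since $K_{2,2n}$ and its mirror $K_{2,-2n}$ are already distinguished by the linking number, which link Floer homology detects.)
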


Each of these results is proven in three cases: where $n>0$, $n<0$ and $n=0$. Only the proof of the $n=0$ cases requires $\Z_2$ coefficients. Throughout this section we will use $K_{2,2n}$ to indicate the $(2,2n)$-cable of $K$ oriented so that the two components of $K$ are oriented in parallel, and $K_{\widetilde{(2,2n)}}$ to indicate the $(2,2n)$-cable of $K$ oriented so that it bounds an annulus. We take $\F$ to be $\Z,\Z_2$ or $\Q$ unless otherwise stated.

\begin{lemma}\label{(2,0)trefoilcable}
   Suppose $L$ is a link such that $\widehat{\HFK}(L;\Z_2)\cong\widehat{\HFK}(T(2,3)_{\widetilde{(2,0)}};\Z_2)$. Then $L$ is either isotopic to $T(2,3)_{\widetilde{(2,0)}}$ or the disjoint union of an unknot and a knot $K$ with knot Floer homology given by:\footnote{Since this article first appeared, Baldwin-Sivek have shown that the only knot with this knot Floer homology is the mirror of $5_2$~\cite{baldwin2022floer}}\begin{align*}
        \widehat{\HFK}(K)\cong(\Z_2)_0^2[1]\oplus(\Z_2)_{-1}^3[0]\oplus(\Z_2)_{-2}^2[-1].
    \end{align*}
\end{lemma}

 Note that this result implies that link Floer homology with $\Z_2$ coefficient detects $T(2,3)_{(2,0)}$, since if a link $L$ satisfies $\widehat{\HFL}(L)\cong\widehat{\HFL}(T(2,3)_{(2,0)})$, then $L$ does not contain an unlinked, unknotted component. This follows from the fact that for any link $L'$ and unknot $U$, the following is true: 

 \[\widehat{\HFL}(L' \sqcup U) = \widehat{\HFL}(L') \otimes (\mathbb{F}_{0} \oplus \mathbb{F}_{-1})\]

\vspace{.1in}

where here $\mathbb{F}_{0} \oplus \mathbb{F}_{-1}$ is supported in multi-Alexander grading $0$.

    We first give a partial computation of $\widehat{\HFK}(T(2,3)_{\widetilde{(2,0)}};\Z_2)$. 
    
    \begin{lemma}\label{20trefoilcomp}
  Let $\F$ be $\Z_2$. $ \widehat{\HFK}(T(2,3)_{\widetilde{2,0}};\F)$ is given by either:  \begin{align*}
 (\F_{\frac{1}{2}}^2\oplus\F_{-\frac{1}{2}}^2)[1]\oplus(\F_{-\frac{1}{2}}^3\oplus\F_{-\frac{3}{2}}^3)[0]\oplus(\F_{-\frac{3}{2}}^2\oplus\F_{-\frac{5}{2}}^2)[-1]   \end{align*}

or

\begin{align*}(\F_{\frac{1}{2}}^2\oplus\F_{-\frac{1}{2}}^2)[1]\oplus(\F_{-\frac{1}{2}}^4\oplus\F_{-\frac{3}{2}}^4)[0]\oplus(\F_{-\frac{3}{2}}^2\oplus\F_{-\frac{5}{2}}^2)[-1]  \end{align*}
    \end{lemma}

    To prove this we compute $\widehat{\HFK}(T(2,3)_{(2,1)};\Z_2)$ and $\widehat{\HFK}(T(2,3)_{(2,-1)};\Z_2)$ using Hanselman-Watson's cabling formula in the theory of immersed curves~\cite{hanselman2023cabling}, then apply the skein exact triangle for knot Floer homology to determine $\widehat{\HFK}(T(2,3)_{(2,0)};\Z_2)$, 
    and thereby $\widehat{\HFK}(T(2,3)_{\widetilde{(2,0)}};\Z_2)$.    The reason we use $\Z_2$ coefficients is that the immersed curves formulation of bordered Floer homology with integer coefficients and indeed bordered Floer homology with integer coefficients itself have not yet been developed. 
    
\begin{proof}[Proof of Lemma~\ref{20trefoilcomp}] Hanselman--Watson's cabling formula implies that: \begin{align*}
        \widehat{\HFK}(T(2,3)_{(2,1)};\F)\cong \F_0[2]\oplus(\F_{-1}\oplus\F_0)[1]\oplus\F_{-1}[0]\oplus (\F_{-2}\oplus\F_{-3})[-1]\oplus\F_{-4}[-2]
    \end{align*}
    
    \begin{align*}
        \widehat{\HFK}(T(2,3)_{(2,-1)};\F)\cong \F_2[2]\oplus(\F_{1}\oplus\F_0)[1]\oplus(\F_{-1}^2\oplus\F_0)[0]\oplus (\F_{-2}\oplus\F_{-1})[-1]\oplus\F_{-2}[-2]
    \end{align*}
    
    \vspace{.2in}
    
    \noindent We apply the skein exact triangle from~\cite[Equation 7]{Holomorphicdisksandknotinvariants}, taking $L_-$ to be $T(2,3)_{(2,-1)}$, $L_+$ to be $T(2,3)_{(2,1)}$ and $L_0$ to be $T(2,3)_{(2,0)}$. Thus we can deduce that $\widehat{\HFK}(T(2,3)_{(2,0)},i)$ is trivial for $i> 2$ and $\widehat{\HFK}(T(2,3)_{(2,0)},2)\cong \F_{\frac{3}{2}}\oplus\F_{\frac{1}{2}}$. To determine $\widehat{\HFK}(T(2,3)_{(2,0)},1)$ we note that it follows immediately from the skein exact triangle that it is either $\F_{\frac{1}{2}}\oplus\F_{-\frac{1}{2}}$ or $\F_{\frac{1}{2}}^2\oplus\F_{-\frac{1}{2}}^2$. To exclude the former case first note that $\widehat{\HFL}(T(2,3)_{(2,0)})$ is supported on the lines $A_2=A_1$, $A_2=A_1+1$, $A_2=A_1-1$, since $T(2,3)_{\widetilde{(2,0)}}$ bounds an annulus. It follows that the component of $\widehat{\HFL}(T(2,3)_{(2,0)})$ with Alexander gradings satisfying $A_1+A_2=2$ is $(\F_{1}\oplus\F_{0})[1,1]$, where $[1,1]$ indicates the Alexander multi-grading . Since $T(2,3)$ is fibered, but neither component of $T(2,3)_{(2,0)}$ is braided with respect to the other (since the linking number is zero), it follows from \cite{martin2022khovanov} that the rank of $\widehat{\HFL}(T(2,3)_{(2,0)}))$ is at least four in each of the maximal $A_i$ gradings (namely $A_i=1$). Thus $\widehat{\HFK}(T(2,3)_{(2,0)},1)\cong \F_{-\frac{1}{2}}^2\oplus\F_{\frac{1}{2}}^2$ and indeed: \begin{align*}\widehat{\HFL}(T(2,3)_{(2,0)},(1,0))\cong\widehat{\HFL}(T(2,3)_{(2,0)};(0,1))\cong\F_0\oplus\F_{-1}.\end{align*}
    
    To conclude the computation it suffices to determine $\widehat{\HFK}(T(2,3)_{(2,0)},0)$ by the symmetries of link Floer homology. Note again that the skein exact triangle implies that $\widehat{\HFK}(T(2,3)_{(2,0)},0)$ is either $\F_{-\frac{1}{2}}\oplus\F_{-\frac{3}{2}}$ or $ \F_{-\frac{1}{2}}^2\oplus\F_{-\frac{3}{2}}^2$.
    
    It follows that either: \begin{align*}
\widehat{\HFK}(T(2,3)_{\widetilde{2,0}})\cong (\F_{\frac{1}{2}}^2\oplus\F_{-\frac{1}{2}}^2)[1]\oplus(\F_{-\frac{1}{2}}^3\oplus\F_{-\frac{3}{2}}^3)[0]\oplus(\F_{-\frac{3}{2}}^2\oplus\F_{-\frac{5}{2}}^2)[-1]   \end{align*}

or

\begin{align*} \widehat{\HFK}(T(2,3)_{\widetilde{2,0}})\cong (\F_{\frac{1}{2}}^2\oplus\F_{-\frac{1}{2}}^2)[1]\oplus(\F_{-\frac{1}{2}}^4\oplus\F_{-\frac{3}{2}}^4)[0]\oplus(\F_{-\frac{3}{2}}^2\oplus\F_{-\frac{5}{2}}^2)[-1]  \end{align*}

\end{proof}

We can now proceed to the proof of Lemma~\ref{(2,0)trefoilcable}
\begin{proof}[Proof of Lemma~\ref{(2,0)trefoilcable}]
 Suppose $\widehat{\HFK}(L)\cong\widehat{\HFK}(T(2,3)_{\widetilde{(2,0)}})$. It suffices to show that $L$ bounds an annulus, has linking number $0$ and has  a trefoil component.

We first note that $L$ has at most two components since the maximal Maslov grading of $\widehat{\HFK}(L)$ is $\frac{1}{2}$. Since $\rank(\widehat{\HFK}(L))$ is even it follows that $L$ cannot be a knot, so $L$ is a two component link. The fact that the linking number is zero follows from the fact that the Conway polynomial detects the linking number of two component links~\cite{hoste1985firstcoefficientoftheconwaypolynomial}. Since the maximal Alexander grading of $\widehat{\HFK}(L)$ is $1$, $L$ either bounds an annulus or it is the disjoint union of an unknot and a genus one knot. If $\rank(\widehat{\HFK}(L))=16$ then $L$ cannot be a two component link with a split, unknotted component as in this case the genus one knot would have to have knot Floer homology of even rank, a contradiction. Thus the genus one knot must have knot Floer homology of the form: \begin{align*}
    \F_0^2[1]\oplus\F_{-1}^3[0]\oplus\F_{-2}^2[-1]
\end{align*} as required.

Suppose now that $L$ bounds an annulus. Note that a two component link containing a knot of genus $g$ must have knot Floer homology containing generators of Maslov grading which differ by at least $1+2g$. Since the Maslov gradings of $\widehat{\HFK}(L)$ differ by at most $3$ it follows that each component of $L$ is of genus at most one. Note that the only link with unknotted components bounding an annulus and of linking number zero is the two component unlink, which is of the incorrect Euler characteristic. Thus $L$ contains genus one components. To see that each of these components is fibered, we note that if $L$ has a component $K$ with $g(K)=1$ which is not fibered then $L$ contains at least two pairs of generators which differ in Maslov grading by $3$. There are only two such pairs, namely the pairs of generators of Maslov grading $\frac{1}{2}$ and $-\frac{5}{2}$. Since the linking number of $L$ is zero we see that in link Floer homology the two Maslov index $0$ generators have, without loss of generality, $A_1$ grading equal to $1$, so we have that they are supported in bi-Alexander grading $(1,0)$. But since $L$ bounds an annulus, both components of $L$ are isotopic and we must have two generators of $A_2$ grading $1$, a contradiction. Thus $L$ has genus one fibered components.

It thus suffices to show these components are neither $T(2,-3)$ nor the figure eight. Note that $T(2,-3)_{\widetilde{(2,0)}}$ is the mirror of $T(2,3)_{(2,0)}$, therefore  $$\widehat{\HFK}(T(2,-3)_{\widetilde{(2,0)}})\cong(\widehat{\HFK}(T(2,3)_{\widetilde{(2,0)}}))^*\not\cong\widehat{\HFK}(L).$$ Similarly, the $\widetilde{(2,0)}$ cable of the figure eight is isotopic to the $\widetilde{(2,0)}$-cable of the mirror of the figure eight, so if $L$ where the $\widetilde{(2,0)}$-cable of the figure eight then its knot Floer homology would be isomorphic to its dual, which is not the case. Thus $L$ is isotopic to $T(2,3)_{\widetilde{(2,0)}}$ as desired.

\end{proof}

We now prove a similar result for the $(2,0)$ cable of the figure eight knot.

\begin{lemma}\label{20cablefigureeight}
    Let $K$ be the figure eight knot. Suppose $\widehat{\HFL}(L)\cong\widehat\HFL(K_{2,0},\Z_2)$. Then $L$ is isotopic to $K_{2,0}$.
\end{lemma}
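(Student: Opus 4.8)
The plan is to follow the template established in the proof of Lemma~\ref{(2,0)trefoilcable} very closely, since $K_{2,0}$ for $K$ the figure eight has analogous structural features: it bounds an annulus, has linking number zero, and its companion is a fibered genus one knot. First I would carry out a partial computation of $\widehat{\HFK}(K_{2,0})$ using exactly the strategy outlined before Lemma~\ref{(2,0)trefoilcable}: compute $\widehat{\HFK}(K_{2,1})$ and $\widehat{\HFK}(K_{2,-1})$ via Hanselman--Watson's cabling formula~\cite{hanselman2019cabling}, and then apply the knot Floer skein exact triangle from~\cite[Equation 7]{Holomorphicdisksandknotinvariants} with $L_0 = K_{2,0}$ to pin down the Alexander gradings. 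As in the trefoil case, the skein triangle will leave an ambiguity in the middle Alexander gradings (two possible ranks), which I would resolve using Martin's fiberedness criterion~\cite{GageT26}: since the figure eight is fibered but the two components of $K_{2,0}$ are not braided about one another (linking number zero), the rank of $\widehat{\HFL}(K_{2,0})$ must be at least four in each maximal $A_i$ grading, forcing the larger of the two options.

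Once the target homology is computed, the botany argument proceeds as in Lemma~\ref{(2,0)trefoilcable}. Suppose $\widehat{\HFK}(L)\cong\widehat{\HFK}(K_{2,0})$. I would first bound the number of components using the range of Maslov gradings, conclude $L$ has two components (the rank being even rules out a knot), and deduce that the linking number is zero via the fact that the Conway polynomial detects linking number~\cite{hoste1985firstcoefficientoftheconwaypolynomial}. The maximal Alexander grading being $1$ means $L$ either bounds an annulus or splits as an unknot together with a genus one knot. The key structural difference from the trefoil case is that the figure eight has \emph{symmetric} Alexander polynomial and is amphichiral, so I expect the split case to be excluded (or handled) by a parity/rank argument on the putative genus one summand, and the Maslov grading spread to again force genus at most one and fiberedness of the components. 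Here I would reuse the argument that a non-fibered genus one component would create too many pairs of generators differing by Maslov grading $3$, contradicting the computed homology, and that unknotted annulus-bounding components give only the unlink, which has the wrong Euler characteristic.

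The final and most delicate step is to upgrade ``$L$ bounds an annulus with a fibered genus one companion'' to ``$L$ is isotopic to $K_{2,0}$.'' This is where the figure eight case is genuinely easier than the trefoil case: the only fibered genus one knots are the two trefoils and the figure eight, so I must rule out the trefoil companions. In the trefoil lemma the exclusion used that $T(2,-3)_{(2,0)}$ is the mirror of $T(2,3)_{(2,0)}$ and that knot Floer homology is not self-dual there; for the figure eight, amphichirality means $K_{2,0}$ \emph{is} its own mirror and its knot Floer homology \emph{is} self-dual, so a different invariant must separate it from the trefoil cables. I would distinguish the companion by its Alexander polynomial or Maslov grading data: the figure eight has $\Delta(t)=-t+3-t^{-1}$ while the trefoils have $\Delta(t)=t-1+t^{-1}$, and these differences propagate to the cable's knot Floer homology (both total rank and graded Euler characteristics), so the computed $\widehat{\HFK}(K_{2,0})$ simply does not match $\widehat{\HFK}(T(2,\pm 3)_{(2,0)})$.

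The main obstacle I anticipate is the last paragraph: ensuring that the graded homology genuinely separates the figure eight cable from the trefoil cables, since amphichirality removes the clean mirror/duality argument available in Lemma~\ref{(2,0)trefoilcable}. I would resolve this by comparing the explicit computed groups (their ranks in each Maslov and Alexander grading) rather than appealing to duality, using the fact that knot Floer homology detects both the trefoils and the figure eight as companions to conclude that a fibered genus one companion realizing the computed cable homology must be the figure eight.
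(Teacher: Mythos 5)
Your overall route is the paper's: the same Hanselman--Watson plus skein-triangle computation of the target homology, the same reduction to ``two components, linking number zero, bounding an Euler characteristic zero surface,'' the same fiberedness argument for the components, and the same endgame in which the mirror/duality trick of Lemma~\ref{(2,0)trefoilcable} is replaced by a direct comparison of the computed groups with $\widehat{\HFK}(T(2,\pm3)_{(2,0)})$ -- the paper concludes in exactly this way (``the $(2,0)$-cables of the trefoils have distinct knot Floer homology, as shown previously''), so your worry that the final step needs something beyond explicit graded comparison is unfounded. One minor misattribution: Martin's braid-detection criterion (equivalently, the annulus support lines together with evenness of rank in each $A_i$ grading, which is what the paper actually invokes in the figure-eight computation) resolves the skein-triangle ambiguity in the \emph{top} $A_i$ grading, not the middle Alexander grading; the paper deliberately leaves the middle rank undetermined, $(\F_{\frac{1}{2}}\oplus\F_{-\frac{1}{2}})^n$ with $n\in\{2,3\}$, flagging it as ``unnecessary,'' and neither its proof nor yours needs it.

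The genuine gap is your exclusion of the split case ``by a parity/rank argument on the putative genus one summand.'' Plain parity fails: precisely because the middle rank is not pinned down, the total rank could be $14$, in which case a split $L = U \sqcup K'$ would require $\rank(\widehat{\HFK}(K')) = 7$, which is odd and hence parity-consistent -- this is exactly the loophole that forced the weaker two-case conclusion of Lemma~\ref{(2,0)trefoilcable}, and amphichirality of the figure eight gives you no leverage against it. The paper instead excludes the split case at the bi-graded level: a split link has $\widehat{\HFL}$ supported in the single plane $A_i = 0$ for its unknotted component, whereas the computed homology has rank-four support along the line $A_1 = 1$, with generators in both bi-gradings $(1,0)$ and $(1,1)$, so the support is not confined to any plane $A_i = 0$. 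If you want an argument staying closer to your parity idea, it can be repaired by refining it: the Maslov gradings of the computed homology force the putative $K'$ to be $\delta$-thin with the $\delta$-grading of the figure eight (i.e.\ $\delta = 0$ up to overall shift) and of rank $7$, and the classification of thin complexes into box summands and a single staircase -- where $\delta = 0$ forces $\tau = 0$ and hence a staircase of length one -- makes the rank congruent to $1 \pmod 4$, contradicting rank $7$. Note the contrast with the trefoil case, where the leftover knot has $\delta = -1$, so a rank $4+3$ decomposition is consistent and the split case genuinely cannot be excluded; as written, your proposal does not register this asymmetry, and parity alone does not close the case.
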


As before we begin with a partial computation of $\widehat\HFK(K_{2,0})$.

\begin{lemma}
    Let $\F$ be $\Z_2$ and $K$ be the figure eight knot. $\widehat{\HFK}(K_{{(2,0)}},i;\F)$ is $0$ for $i> 2$, $\F_{\frac{3}{2}}\oplus\F_{\frac{5}{2}}$ for $i=2$, $\F_{\frac{3}{2}}^2\oplus\F_{\frac{1}{2}}^2$ for $i=1$, and $(\F_{\frac{1}{2}}\oplus\F_{-\frac{1}{2}})^n$ for $i=0$ and some $n\leq 3$.
\end{lemma}
\begin{proof}
Hanselman-Watson's cabling formula for knot Floer homology in terms of immersed curves~\cite{hanselman2023cabling} shows that: \begin{align*}
\widehat{\HFK}(K_{2,1})\cong \widehat{\HFK}(K_{2,-1})\cong \F_1[2]\oplus(\F_1\oplus\F_0)[1]\oplus\F_0^3[0]\oplus(\F_{-2}\oplus\F_{-1})[-1]\oplus\F_{-3}[-2]\end{align*}

Again we apply the skein exact triangle from~\cite[Equation 7]{Holomorphicdisksandknotinvariants} -- with $L_0=K_{2,0}, L_+=K_{2,1}, L_-=K_{2,1}$. This immediately implies that $\widehat{\HFK}(K_{2,0},i)\cong 0$ for $|i|>2$ and $\widehat{\HFK}(K_{2,0},2) \cong 
\F_{\frac{3}{2}}\oplus\F_{\frac{5}{2}}$. Indeed we see that $\widehat{\HFK}(K_{2,0},1)$ is either $\F_{\frac{3}{2}}^2\oplus\F_{\frac{1}{2}}^2$ or $\F_{\frac{3}{2}}\oplus\F_{\frac{1}{2}}$. To rule out the latter case, we apply similar tactics as in after stating Lemma \ref{(2,0)trefoilcable}. In brief, note that $K_{\widetilde{2,0}}$ bounds an annulus, so that $\widehat{\HFL}(K_{2,0})$ is supported on the lines $A_2=A_1$, $A_2=A_1\pm 1$. Thus since $\widehat{\HFL}(K_{2,0})$ must be of even rank in each $A_i$ grading, and in the gradings $A_i=1$ the rank of the link Floer homology of $L$ must be at least $4$ since $L$ is not fibered, the case that $\widehat{\HFK}(K_{2,0},1)\cong \F_{\frac{3}{2}}\oplus\F_{\frac{1}{2}}$ is excluded. While unnecessary we note that $\widehat{\HFK}(K_{2,0},0)\cong(\F_{\frac{1}{2}}\oplus\F_{-\frac{1}{2}})^n$ for some $0\leq n\leq 3$. In fact $n\geq 2$, as else $\widehat{\HFK}(K_{2,0})$ cannot admit a spectral sequence to $\widehat{\HFL}(K)\otimes V$.
\end{proof}

\begin{proof}[Proof of Lemma~\ref{20cablefigureeight}]
Suppose $L$ is as in the statement of the Lemma. Then $L$ is a two component link with linking number $0$. Note that upon reversing the orientation of a component, $L$ bounds a surface of Euler characteristic $0$. Thus either $L$ consists of the disjoint union of an unknot and a genus one knot, or $L$ bounds an annulus. The former case is excluded since $\widehat{\HFL}(L)$ is not supported along a single $A_i=0$ grading. Thus $L$ bounds an annulus, and is therefore the $(2,0)$-cable of some knot $K$.

To see that $K$ is the figure eight knot, note that $K$ can be of genus at most one since the maximal $A_i$ grading is $1$ for each $i$. Indeed, it is plainly not the case that $\widehat{\HFL}(K_{2,0})\cong\widehat{\HFL}(T(2,0))$, so $K$ must be a genus one knot. To see that $K$ is fibered, note that the Maslov gradings of the $A_1=1$ part of $\widehat{\HFL}(L)$ is given by $\F_0\oplus\F_2\oplus\F_1^2$ while the $A_1=-1$ part is given by $\F_{-1}\oplus\F_{-2}^2\oplus\F_{-3}$. It follows that the Maslov index $2$ generator, as well as the Maslov index $-3$ generator cannot persist under the spectral sequence to $\widehat{\HFL}(L_1)$, where $L_1$ is the component of $L$ corresponding to $A_1$. Thus $\widehat{\HFL}(L_1)$ is of rank one in the top Alexander grading and $L_1$ is fibered~\cite{ghiggini2008knot}. Thus $L$ is a $(2,0)$-cable of a trefoil or the figure eight. But the $(2,0)$-cables of the trefoils have distinct knot Floer homology, as shown previously. Thus $L$ is $K_{2,0}$ as desired.
\end{proof}

To prove Theorem~\ref{HFL2detectsRHTcables} and Theorem~\ref{HFLdetects(2,2n)cablesofthefigureeight} we first show that if the link Floer homology of a link satisfies certain properties then the link is a $(2,2n)$ cable of a trefoil or the figure eight knot. Then we show that the link Floer homology of each of these three links satisfies these properties for the given $n$.

\begin{lemma}\label{algebraiccharachterizationofcables}
Let $L$ be a two component link with linking number $n>1$ such that:
\begin{enumerate}
        \item $\widehat{\HFL}(L)$ is of rank $2$ in the maximal non-zero $A_1$ or $A_2$ grading.
        \item The link Floer homology polytope has a vertex at multi-Alexander grading $(1+\frac{n}{2},1+\frac{n}{2})$.
        \item $\widehat{\HFL}(L)$ has support on the lines $A_2=A_1,A_2=A_1+1, A_2=A_1-1$.
         \end{enumerate}
         
Then $L$ is isotopic to $K_{2,2n}$ where $K$ is a trefoil or the figure eight knot.

\end{lemma}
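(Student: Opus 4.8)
The plan is to convert the three homological hypotheses into geometric information, first showing that $L\cong K_{2,2n}$ for a genus-one knot $K$, and then upgrading this to the assertion that $K$ is fibered, at which point the classification of genus-one fibered knots finishes the proof.

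\emph{Step 1: $L$ is a cable.} Since the linking number $n>1$ is nonzero, $L$ is non-split. Hypothesis (3) says the link Floer polytope is contained in the slab $|A_1-A_2|\le 1$. Feeding this into Ozsv\'ath--Szab\'o's identification of the link Floer polytope with the Thurston polytope of the exterior~\cite{ozsvath2008linkFloerThurstonnorm}, and reading off the width in the anti-diagonal direction $(1,-1)$ after accounting for the meridional correction terms, I would deduce that the Thurston norm of the class represented by a surface with boundary $L_1\cup\overline{L_2}$ vanishes. Such a norm-zero cobounding surface exists because $L$ is non-split, and being connected with one boundary circle on each component and nonnegative Euler characteristic it must be an annulus. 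Hence the two components of $L$ are isotopic parallel copies; matching linking numbers identifies the framing and gives $L\cong K_{2,2n}$ for some knot $K$.

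\emph{Step 2: the companion has genus one.} Applying the same polytope-to-Thurston-norm dictionary in the direction $(1,0)$ — equivalently, reading off from the vertex in hypothesis (2) that the maximal $A_1$-grading of $\widehat{\HFL}(L)$ is $1+\tfrac{n}{2}$ — pins the Seifert genus of the companion. For a $(2,2n)$-cable the top $A_1$-grading equals $g(K)+\tfrac{n}{2}$ (as one checks from the Thurston-norm formula for the class $(1,0)$), so $g(K)=1$. In particular $\widehat{\HFK}(K,1)\neq 0$, since knot Floer homology detects the Seifert genus.

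\emph{Step 3: the companion is fibered.} Here I would run the spectral sequence from $\widehat{\HFL}(L)$ to $\widehat{\HFK}(K)\otimes V[\tfrac{n}{2}]$ obtained by forgetting $L_2$. Its differentials strictly drop the forgotten $A_2$-grading while preserving $A_1$, so the complex splits over $A_1$ and the top grading $A_1=1+\tfrac{n}{2}$ supports its own spectral sequence. On the first page this top slice has rank equal to the rank of $\widehat{\HFL}(L)$ in the maximal $A_1$-grading, namely $2$ by hypothesis (1), while on the $E_\infty$ page it is $\widehat{\HFK}(K,1)\otimes V$, of rank $2\cdot\rank\widehat{\HFK}(K,1)$. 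As a spectral sequence cannot raise rank, $\rank\widehat{\HFK}(K,1)\le 1$; combined with Step 2 this forces $\rank\widehat{\HFK}(K,1)=1$, so $K$ is fibered by Ghiggini's genus-one fiberedness detection~\cite{ghiggini2008knot} (or Ni's theorem~\cite{ni2007knot}).

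\emph{Conclusion and main difficulty.} A genus-one fibered knot is one of $T(2,3)$, $T(2,-3)$, or the figure eight, so $K$ is a trefoil or the figure eight and $L\cong K_{2,2n}$, as claimed. I expect the main obstacle to be Step 1 together with the genus bookkeeping in Step 2: translating the purely combinatorial polytope hypotheses into the geometric conclusions requires careful tracking of the meridional correction terms and orientation conventions in the Ozsv\'ath--Szab\'o Thurston-norm theorem. By contrast, once $L$ is known to be a genus-one cable, Step 3 is a clean rank count through the sublink spectral sequence.
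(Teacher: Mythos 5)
Your proposal is correct and takes essentially the same route as the paper: condition (3) gives a Euler-characteristic-zero cobounding surface and hence a cable structure (the paper argues directly from the maximal Alexander grading of the reoriented link rather than through the dual Thurston polytope, but this is the same norm-detection fact), condition (2) bounds the companion's genus via $g(K)+\frac{n}{2}\leq\max A_i=1+\frac{n}{2}$ and excludes the unknot, and condition (1) plus the rank count in the sublink spectral sequence gives fiberedness, exactly the paper's compressed argument. One small correction to your Step 1: connectedness of the norm-zero surface does not follow from non-splitness but from the observation that a disk component would force $\lk(L_1,L_2)=0$, contradicting $n>1$ --- which is how the paper rules out the disjoint disk-plus-punctured-torus case.
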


\begin{proof}[Proof of Lemma~\ref{algebraiccharachterizationofcables}] Suppose $L$ is as in the hypotheses of the theorem. Condition $3$ implies that $L$ bounds an annulus upon reversing the orientation of either component -- it bounds an Euler characteristic zero surface with two boundary components, neither of which is a disk. Since the maximal Alexander grading is $2+n$, it follows that the maximal $A_i$ gradings are $1+\frac{n}{2}$.

Since  $g(L_i)+\frac{n}{2}\leq\max\{A_i:\widehat{\HFL}(L,(A_1,A_2))\neq 0\}$, it follows that $g(L_i)\leq 1$ for each $i$. Indeed since, $\widehat{\HFL}(L)$ is of rank $2$ in $A_1$ grading $\frac{n}{2}+1$, $L_i$ is genus one and fibered or genus zero, and hence an unknot, a trefoil or the figure eight knot. It follows that $L\cong K_{2,2n}$ where $K$ is the unknot, a trefoil, or the figure eight knot. $(2,2n)$-cables of the unknot are the torus links $T(2,2n)$ which do not satisfy property $2$. Thus $L$ is a $(2,2n)$ cable of a trefoil knot or the figure eight knot, as desired.
\end{proof}
 
 To apply Lemma~\ref{algebraiccharachterizationofcables}, we need to show that $\widehat{\HFL}(K_{2,2n})$ satisfies the conditions of the hypothesis of Lemma~\ref{algebraiccharachterizationofcables} for $K$ a trefoil or figure eight knot. The easiest way to proceed is via the following remark:
 
 \begin{remark}
Rudolph shows that $K_{n,m}$ can be written as a Murasugi sum of $K_{n,\text{sign}(m)}$, which is fibered by a result of Stallings~\cite{stallings1978constructions}, and $T(n,m)$ along minimal genus Seifert surfaces~\cite{rudolph2002non}. Note that Rudolph's result is only stated for $n,m$ coprime, but the proof works for arbitrary $n,m$. Results of Gabai show that Murasugi sums of fiber surfaces are fibered, and that Murasugi sums of minimal genus Seifert surfaces are minimal genus Seifert surfaces~\cite{gabai1986detecting}. It follows that $(2,2n)$ cables of genus one fibered knots are fibered and of genus $n$, that is Euler characteristic $-2n$. \end{remark}

 If one wishes to only use Rudolph's stated result, for knots, one can alternately proceed via the following lemma:
 
 \begin{lemma}\label{genusof22ncables}
 Let $K$ be a genus one fibered knot. For $n\geq 1$, $K_{2,2n}$ is fibered and $\chi(K_{2,2n})=-2n$.
 \end{lemma}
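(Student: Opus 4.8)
The plan is to obtain the even, and hence non-coprime, cable $K_{2,2n}$ from its coprime neighbour $K_{2,2n+1}$, so that Rudolph's result need only be invoked in its \emph{stated} (coprime) form, and then to transfer fiberedness across a single Hopf band using Gabai's theorem. Throughout I would take $n>0$; the cases $n<0$ follow by mirroring, and the degenerate case is handled separately.

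First I would dispose of $K_{2,2n+1}$. Since $\gcd(2,2n+1)=1$ this is a genuine cable knot, so Rudolph's stated result~\cite{rudolph2002non} presents a minimal genus Seifert surface $S$ for $K_{2,2n+1}$ as a Murasugi sum $S=S_0 *_D S_1$ of a Seifert surface $S_0$ for $K_{2,1}$ and the fiber $S_1$ of the torus knot $T(2,2n+1)$. Both summands are fiber surfaces: $S_1$ because $T(2,2n+1)$ is a fibered torus knot, and $S_0$ because $K_{2,1}$ is fibered, being the $(2,1)$-cable of the fibered knot $K$~\cite{stallings1978constructions}. By Gabai's theorem that a Murasugi sum of fiber surfaces is again a fiber surface~\cite{gabai1986detecting}, $K_{2,2n+1}$ is fibered with fiber $S$, and $\chi(S)=\chi(S_0)+\chi(S_1)-\chi(D)=\chi(K_{2,1})+\chi(T(2,2n+1))-1$, which is routine to evaluate.

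Next I would relate $K_{2,2n}$ to $K_{2,2n+1}$. On the cabling torus the pattern link $T(2,2n)$ and the pattern knot $T(2,2n+1)$ differ by a single half-twist, and this half-twist appears as a plumbed Hopf band lying in the cabling region away from the companion. Concretely, I would exhibit the fiber $S$ of $K_{2,2n+1}$ as a Hopf plumbing $S=S' *_{D'} H$, where $H$ is a Hopf band and $S'$ is a Seifert surface with $\partial S'=K_{2,2n}$, the de-plumbing splitting the single boundary component of $S$ into the two components of $K_{2,2n}$. A Hopf band is a fiber surface, so Gabai's theorem in its sharp form---a Murasugi sum is a fiber surface if and only if both summands are---forces $S'$ to be a fiber surface. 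Hence $K_{2,2n}$ is fibered with fiber $S'$, and $\chi(K_{2,2n})=\chi(S')=\chi(S)+1=\chi(K_{2,1})+\chi(T(2,2n+1))$, completing the computation of the Euler characteristic.

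The main obstacle is the geometric step in the previous paragraph: rigorously identifying the extra crossing of the $T(2,2n+1)$ pattern with a plumbed Hopf band and checking that the de-plumbing disk $D'$ meets $S$ so as to be disjoint from the companion fiber and to recover exactly $K_{2,2n}$ as the new boundary. In other words, one must verify the hypotheses of Gabai's de-plumbing (``only if'') direction, namely that $S$ genuinely decomposes as a Murasugi sum along $D'$ in the prescribed way. Once this decomposition is established, both the fiberedness of $K_{2,2n}$ and the Euler-characteristic bookkeeping are immediate; everything else is standard additivity of $\chi$ under plumbing and the classical fiberedness of torus knots and of cables of fibered knots.
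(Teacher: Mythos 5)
Your first step coincides with the paper's: both treat the coprime cable $K_{2,2n+1}$ via Rudolph's stated decomposition as a Murasugi sum of $K_{2,1}$ and $T(2,2n+1)$, with fiberedness of the summands coming from Stallings~\cite{stallings1978constructions} and of the sum from Gabai~\cite{gabai1986detecting}. After that you genuinely diverge. The paper never touches the fiber surface of $K_{2,2n}$ again: it applies the knot Floer skein exact triangle to the triple $(L_+,L_0,L_-)=(K_{2,2n+1},K_{2,2n},K_{2,2n-1})$, using $g(K_{2,2n+1})=2+n$ and $g(K_{2,2n-1})=1+n$ to pin down the top Alexander grading of $\widehat{\HFK}(K_{2,2n})$ and its rank there, and then invokes fiberedness detection~\cite{ghiggini2008knot,ni2007knot} to conclude $K_{2,2n}$ is fibered. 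Your route---de-plumbing a single Hopf band from the fiber $S$ of $K_{2,2n+1}$ and citing the ``only if'' direction of Gabai's Murasugi sum theorem---is purely geometric, avoids Floer theory entirely, and exhibits the fiber of $K_{2,2n}$ explicitly. The price is the verification you flag yourself: that $S$ really decomposes as a Murasugi sum $S'\ast H$ with $\partial S'=K_{2,2n}$. This is true and standard (the fiber of $T(2,2n+1)$ is an iterated plumbing of $2n$ Hopf bands supported in a ball in the cabling solid torus, disjoint from Rudolph's summing disk, so the last band de-plumbs within $S$), but note that carrying it out is essentially the same work as the non-coprime extension of Rudolph that the paper's preceding Remark already asserts, so the detour through $K_{2,2n+1}$ buys less than it appears to. You also implicitly use that a fiber surface realizes the maximal Euler characteristic when you set $\chi(K_{2,2n})=\chi(S')$; that is standard but should be cited (Thurston-norm minimization by fibers, or Gabai's minimal-genus Murasugi sum theorem).

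There is one arithmetic point you must confront: completing your bookkeeping gives $\chi(K_{2,2n})=\chi(K_{2,1})+\chi(T(2,2n+1))=(1-4g(K))+(1-2n)=-2n-2$ for $g(K)=1$, \emph{not} the stated $-2n$. This is not a defect of your method; the constant in the lemma as printed is a slip. Indeed, with $g(K_{2,2n\pm 1})$ as above, the skein triangle forces $\widehat{\HFK}(K_{2,2n})$ to be rank one in top Alexander grading $2+n$ (the paper's line $\frac{2-\chi(K_{2,2n})}{2}=1+n$ should read $2+n$), which again gives $\chi(K_{2,2n})=-2n-2$; and it is this corrected value that the paper actually uses downstream, e.g.\ in the proof of Theorem~\ref{HFL2detectsRHTcables}, where the maximal Alexander grading of $\widehat{\HFK}(L)$ is taken to be $2+n$ and the link Floer polytope is given a vertex at $(1+\frac{n}{2},1+\frac{n}{2})$. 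So your argument, carried to completion, proves the corrected statement; do not massage the final number to match the misprinted $-2n$.
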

 
 \begin{proof}[Proof of Lemma~\ref{genusof22ncables}]
 Note that, for $n>0$, $K_{2,2n+1}$ is the Murasugi sum of $K_{2,1}$ and $T(2,2n+1)$~\cite{neumann1987unfoldings, rudolph2002non} along their fiber surfaces, which are minimal genus. Let $g$ be a genus one fibered knot. It follows from a result of Gabai~\cite{gabai1986detecting} that $K_{2,2n+1}$ is fibered and that $g(K_{2,2n+1})=2+n$. From here, using the Skein exact triangle for knot Floer homology, we see that $\dfrac{2-\chi(K_{2,2n})}{2}=1+n$ for all $n\geq 1$ -- that is $\chi(K_{2,2n})=-2n$ -- and that ${\rank(\widehat{\HFK}(K_{2,2n},1+n))=1}$ so that $K_{2,2n}$ is fibered, as desired.
 \end{proof}

It follows that conditions 1, 2 and 3 of Lemma~\ref{algebraiccharachterizationofcables} hold for $(2,2n)$ cables of the trefoils, and figure eight. To see that condition $3$ holds note that $(2,2n)$ cables of the three links bound an annuli under reversing the orientation of one of the components.

 We can now proceed to the detection results.

\begin{proof}[Proof of Theorem~\ref{HFL2detectsRHTcables}]

Suppose $L$ is as in the statement of the Theorem. We proceed by cases: when $n=0$, $n>0$ and when $n<0$. The $n=0$ case was dealt with in Lemma~\ref{(2,0)trefoilcable}.

Suppose now that $n>0$. We first show that $\widehat{\HFL}(T(2,3)_{2,2n})$ satisfies the conditions in the hypothesis of Lemma~\ref{algebraiccharachterizationofcables}. Observe that for $n\geq 1$, each component of $T(2,3)_{2,2n}$ is a braid in the complement of the other, so $\widehat{\HFL}(T(2,3)_{2,2n}))$ must be rank two in the maximal $A_i$ grading, by braid detection ~\cite[Proposition 1]{martin2022khovanov}, so condition 1 of Lemma~\ref{algebraiccharachterizationofcables} holds. By Lemma~\ref{genusof22ncables} the maximal Alexander grading of $\widehat{\HFK}(L)$ is $2+n$. $T(2,-3)_{2,2n+1}$ is fibered by Lemma~\ref{genusof22ncables}. Therefore, since $\widehat{\HFL}(T(2,-3)_{2,2n})$ is rank one in the maximal Alexander grading, and must be symmetric under interchanging the components, it follows that the link Floer homology polytope of $L$ has a vertex at $(1+\frac{n}{2},1+\frac{n}{2})$, so that condition 2 holds. We note that the two generators of the top $A_i$ grading of $\widehat{\HFL}(T(2,-3)_{2,2n})$ must be of Maslov gradings $0$ and $-1$.

Suppose now that $\widehat{\HFL}(L)\cong\widehat{\HFL}(T(2,3)_{2,2n})$ for some $n\geq 1$. Since link Floer homology detects the linking number of two component links~\cite{hoste1985firstcoefficientoftheconwaypolynomial}, it follows that the components of $L$ have linking number $n$. Lemma~\ref{algebraiccharachterizationofcables} implies that $L$ is the $(2,2n)$ cable of a trefoil or the figure eight. To see that $L$ is the cable of $T(2,3)$, observe that the $\widehat{\HFL}(L)$ has a Maslov index $0$ generator with $A_1$ grading $1+\frac{n}{2}$ which persists under the spectral sequence to $\widehat{\HFL}(L_i)$, whence $L_i$ is $T(2,3)$.

We now proceed to the case $n<0$, i.e. we show that Link Floer homology detects $T(2,-3)_{2,2n}$ for $n>0$.

Consider $\widehat{\HFL}(T(2,-3)_{2,2n})$. Observe that for $n\geq 1$, each component of $L$ is a braid in the complement of the other, so $\widehat{\HFL}(L)$ must be rank two in the maximal $A_i$ grading, by braid detection~\cite{martin2022khovanov}, so Condition 1 of Lemma~\ref{algebraiccharachterizationofcables} holds. By Lemma~\ref{genusof22ncables} the maximal Alexander grading of $\widehat{\HFK}(L)$ is $2+n$. Since $T(2,-3)_{2,2n+1}$ is fibered, and $\widehat{\HFL}(T(2,-3)_{2,2n+1})$ must be symmetric under interchanging the components, it follows that the link Floer homology polytope of $L$ has a vertex at $(1+\frac{n}{2},1+\frac{n}{2})$, so that condition 2 of Lemma \ref{alexandergradingcharachterizationofunlinks} holds. Condition 3 holds since $T(2,-3)_{2,2n}$ bounds an annulus upon reversing the orientation of a component. Note that the two generators of the top $A_i$ grading of $\widehat{\HFL}(T(2,-3)_{2,2n})$ must be of Maslov gradings $1$ and $2$.

Suppose now that $\widehat{\HFL}(L)\cong\widehat{\HFL}(T(2,-3)_{2,2n})$. Lemma~\ref{algebraiccharachterizationofcables} implies that $L$ is a $(2,2n)$ cable of a trefoil or the figure eight knot. Since the two generators of the top $A_i$ grading of $\widehat{\HFL}(L)$ must be of Maslov gradings $1$ and $2$, it follows that each component is $T(2,-3)$ and we have the desired result.
\end{proof}

The proof of Theorem~\ref{HFLdetects(2,2n)cablesofthefigureeight} is again similar.

\begin{proof}[Proof of Theorem~\ref{HFLdetects(2,2n)cablesofthefigureeight}]
The proof follows the proof of Theorem~\ref{HFL2detectsRHTcables} verbatim, aside from to note that the top $A_i$ generators must be of Maslov index $0$ and $1$ respectively for $n\geq 0$.
\end{proof}

\end{section}

\begin{section}{$E_2$ collapsed chain complexes}
\label{Technical Results}

	In this section we prepare the ground for our detection results for $T(2,8)$ and $T(2,10)$ in subsequent sections by proving a purely algebraic result for $E_2$ collapsed chain complexes.

We will rely on a structure theorem for \emph{$E_2$-collapsed chain complexes over $\Q$} i.e. $\Z\oplus\Z$ graded finitely generated chain complexes with coefficients in $\Q$ and differentials of length at most one. The $\Z_2$ coefficient version is provided in \cite[Section 12]{HolomorphicdiskslinkinvariantsandthemultivariableAlexanderpolynomial}, see \cite[Lemma 7]{petkova2013cables} for a similar result. In particular Ozsv\'ath-Szab\'o note that each $E_2$-collapsed spectral sequences over $\Z_2$ split as the direct sum of complexes of one of five forms, $B_d, V_d^l,H_d^l,X_d^l$ and $Y_d^l$. These are shown in the Figure \ref{e2col}. For the $\Q$ coefficient version the only necessary modification is that the arrows correspond to maps $\mathbbm{1}:\Q\to\Q$, save for the top arrow of the $B_d$ complex, which is given by $-\mathbbm{1}$.

\begin{figure}[h!]
    \centering
    \includegraphics[width=7cm]{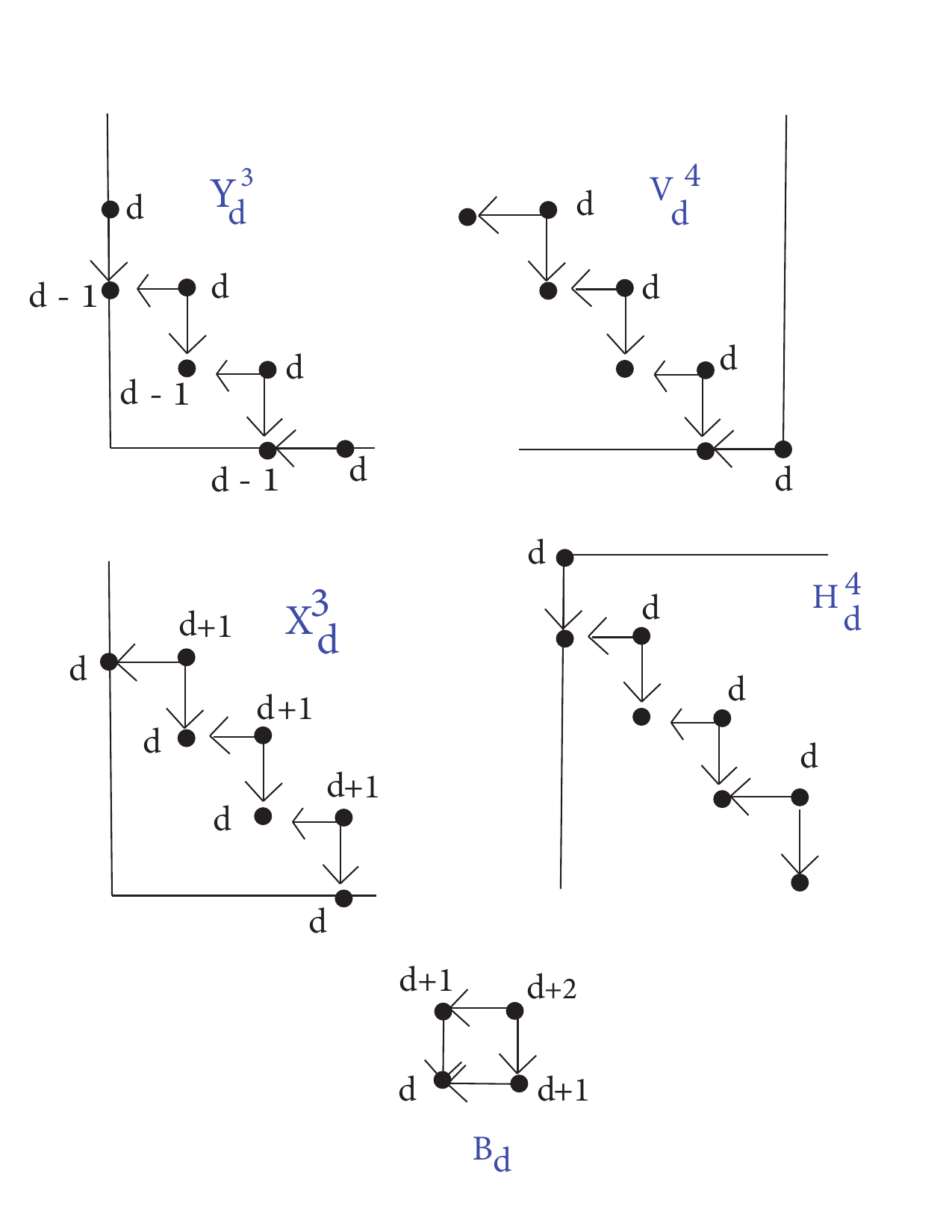}
    \caption{Examples of possible summands of an $E_2$-collapsed chain complex.}
    \label{e2col}
    \end{figure}

    \begin{proposition}\label{E2collapsed}
    Suppose $(C,\partial)$ is a bi-filtered $E_2$ collapsed chain complex with coefficients in $\Q$. Then $(C,\partial)$ is bi-filtered chain homotopy equivalent to a complex which decomposes as a direct sum of copies of chain complexes of the type $B_d[i,j]\oplus V_d^l[i,j],H_d^l[i,j],X_d^l[i,j], Y_d^l[i,j]$.
    \end{proposition}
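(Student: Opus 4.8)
The plan is to run the argument of Ozsv\'ath--Szab\'o \cite[Section 12]{HolomorphicdiskslinkinvariantsandthemultivariableAlexanderpolynomial} over $\Q$ in place of $\Z/2$, keeping careful track of the scalars that appear on the arrows. The essential new content over a field of characteristic zero is sign bookkeeping: over $\Z/2$ one has $1=-1$, so the distinction between $\mathbbm{1}$ and $-\mathbbm{1}$ is invisible, and the whole point of the $\Q$-statement is to pin down exactly where an honest sign is forced.

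First I would fix a bi-homogeneous $\Q$-basis for $C$ and decompose the differential as $\partial=\sum_{(a,b)}\partial_{(a,b)}$ according to the shift it induces on the bi-filtration, where $\partial_{(a,b)}$ lowers the two filtration levels by $a$ and by $b$. The hypothesis that $(C,\partial)$ is $E_2$ collapsed of length at most one means that the only components occurring are the filtration-preserving part together with the two parts that drop a single filtration level by one; writing $\partial_v$ and $\partial_h$ for these two length-one components, the identity $\partial^2=0$ separates by bidegree into $\partial_v^2=0$, $\partial_h^2=0$, and the anticommutativity relation $\partial_v\partial_h+\partial_h\partial_v=0$, once the filtration-preserving part has been removed.

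Next, since $\Q$ is a field and $C$ is finite dimensional, I would reduce the complex by repeatedly applying the cancellation lemma (Gaussian elimination): whenever a component of $\partial$ is an isomorphism between two basis vectors, cancel that pair, replacing $(C,\partial)$ by a bi-filtered chain homotopy equivalent complex with two fewer generators. Cancelling first along the filtration-preserving components yields a reduced complex, and then cancelling the surviving isomorphism components of $\partial_v$ and $\partial_h$ splits $C$, up to bi-filtered chain homotopy equivalence, as a direct sum of indecomposable bi-filtered pieces. A bidegree count shows that each indecomposable is supported either on a single unit square or on a staircase, i.e.\ is one of the shapes $B_d$, $V_d^l$, $H_d^l$, $X_d^l$, $Y_d^l$ of Figure~\ref{Summands}; this classification of shapes is identical to the $\Z/2$ case and is insensitive to the ground field. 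Throughout I would be careful that every cancellation and every change of basis is realized by a \emph{bi-filtered} homotopy equivalence, so that the filtered chain homotopy type is preserved.

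Finally I would normalize the scalars on each summand. On a staircase summand $V_d^l$, $H_d^l$, $X_d^l$, $Y_d^l$ the nonzero arrows form a path, so rescaling the basis vectors one at a time sets every coefficient equal to $\mathbbm{1}$; no obstruction arises because a path carries no nontrivial loop. The single exception is the square $B_d$, where the four arrows form a cycle: here the relation $\partial_v\partial_h+\partial_h\partial_v=0$ forces the two compositions around the square to be \emph{negatives} of one another, and the corresponding ratio of arrow-coefficients is invariant under rescaling of basis vectors. Hence rescaling can set three of the four arrows to $\mathbbm{1}$ but then pins the remaining (top) arrow at $-\mathbbm{1}$, exactly as claimed. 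I expect this gauge-invariance computation for the box to be the only genuine subtlety; everything else is a characteristic-free transcription of Ozsv\'ath--Szab\'o's argument.
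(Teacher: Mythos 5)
Your outline agrees with the paper's proof at both ends: the paper likewise begins by removing the grading-preserving part of $\partial$ (via a bi-filtered version of the Reduction Lemma), and it likewise locates the sign in the box, where $\partial_1\partial_2+\partial_2\partial_1=0$ forces the product of the four coefficients around the square to be $-1$. Your observation that this ``holonomy'' is invariant under rescaling of basis vectors, so that three arrows can be normalized to $\mathbbm{1}$ and the fourth is then pinned at $-\mathbbm{1}$, is a correct and clean packaging of what the paper extracts from $\partial^2=0$ at the moment it splits off a $B_d$ summand.

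The gap is in the middle step, in two respects. First, the mechanism you propose for the decomposition --- ``cancelling the surviving isomorphism components of $\partial_v$ and $\partial_h$'' --- is not available: Gaussian elimination along an arrow that strictly drops a filtration level is \emph{not} a bi-filtered chain homotopy equivalence. The corrected differential and the homotopy equivalences involve the inverse of the cancelled arrow composed with other components of $\partial$, and such compositions strictly raise one of the two filtrations; this is precisely why cancelling length-one arrows computes a later page of the spectral sequence rather than preserving the filtered type. If it were permitted, it would annihilate the very summands you are trying to exhibit: cancelling the arrow of $V_d^l$ replaces it by the zero complex, which is not bi-filtered homotopy equivalent to $V_d^l$. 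Only the filtration-preserving part may be cancelled; after that, the decomposition must be produced by changes of basis, i.e., as an isomorphism of reduced complexes. Second, ``a bidegree count'' does not classify the indecomposables --- that classification \emph{is} the content of the proposition. For instance, two generators $a_1,a_2$ in the same bidegree with $\partial a_1=b_1+b_2$ and $\partial a_2=b_1+\lambda b_2$ form a configuration supported on unit segments that is not visibly one of the five shapes; one needs a base change to split it (into a $V$ and an $H$ when $\lambda\neq 1$, into a free generator plus a hook when $\lambda=1$), and one must show that whenever $\partial_1\circ\partial_2(C(x,y))\cap\partial_2\circ\partial_1(C(x,y))\neq \mathbf{0}$ a box splits off as a genuine direct summand. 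The paper's proof consists of exactly this work: an induction on $\rank(C)$ that, in the overlapping case, chooses bases splitting off a $B_d$ summand (with the $-\mathbbm{1}$ forced by $\partial^2=0$), and otherwise iteratively splits off staircase summands. To repair your argument, replace your second cancellation step either by this explicit splitting induction or by an appeal to Krull--Schmidt for finite-dimensional bigraded modules over $\Q[\partial_1,\partial_2]/(\partial_1^2,\partial_2^2,\partial_1\partial_2+\partial_2\partial_1)$ together with an honest classification of the bigraded indecomposables; your normalization-of-scalars paragraph can then be kept as is.
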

    
    Here and henceforth, $[i,j]$ indicates a shift up in $A_1$ grading by $i$ and a shift up in $A_2$ grading by $j$. Since for a two component $\delta$-thin link $L$ $\widehat{\CFL}(L)$ is $E_2$ collapsed, this result is of great utility. Indeed, the filtered chain homotopy types of the components of $L$ yield significant restraints on the filtered chain homotopy type of $\widehat{\CFL}(L)$. For instance, since the full filtered $\widehat{\CFL}$ complex has homology $\F_0\oplus\F_{-1}$, we see that $\widehat{\CFL}(L)$ has a summand $X_0^l$ or $Y_0^l$ and a summand $X_{-1}^l$ or $Y_{-1}^l$, and no other summands of the form $X_d^l,Y_d^l$. Similarly, it is readily seen that the chain homotopy type of $\widehat{\CFL}(L_1)$ determines the $V_d^l$ summands, while $\widehat{\CFL}(L_2)$ determines the $H_d^l$ summands. Specifically, if there are generators $x,y\in\widehat{\CFL}(L_1)$, with $\langle\partial_{\widehat{\CFL}(L_1)} x,y\rangle\neq 0$ then we must have a summands of $\widehat{\CFL}(L)$ of the form $V_{M(x)}^{A(x)-A(y)}$, as no other summand induces the correct differential on $\widehat{\CFL}(L_1)$.

    The proof of Proposition~\ref{E2collapsed} is entirely elementary. \begin{proof}[Proof of Proposition~\ref{E2collapsed}]
    
    We prove this by showing that $(C,\partial)$ admits a graded basis $\{v_i\}$ for $C$ such that $ \partial v_j$
    is either trivial, a single generator in $\{v_i\}\cap C(x-1,y)\oplus C(x,y-1)$, or the sum of a generator in $\{v_i\}\cap C(x,y-1)$ and a generator in $\{v_i\}\cap C(x-1,y)$. For the purposes of this proof we call a chain complex with a choice of such a basis~\emph{simplified}. The result then follows.

    By a bi-filtered version of the reduction lemma~\cite[Lemma 4.1]{hedden2013khovanov} we can assume that the grading preserving part of $\partial$ is trivial.
    
    Indeed, by repeatedly changing basis we may assume that each summand $C(x,y)$ splits as a direct sum $A(x,y)\oplus B(x,y)\oplus D(x,y)$, where $\partial_2|_{A(x,y)}:A(x,y)\to B(x,y-1)$ is represented by the identity matrix under a choice of basis and $\partial_2$ is trivial when restricted to $B(x,y)\oplus D(x,y)$.
    
    We now proceed to produce a basis for which $\bigoplus_{x,y}C(x,y)$ is simplified. Starting with the basis as above, we proceed by a double induction: we assume the subcomplex $\bigoplus_{(x,y): x< n} C(x,y)$ is simplified and then in turn assume that each summand $\bigoplus_{x<n,y=m} C(x,y)$ is simplified then proceeding by induction on $n$ then $m$.

    Consider $z$ a basis element in  $C(n,m)$. Suppose $\partial_1(z)=\sum q_ia_i+\sum r_jb_j+\sum s_kd_k$, where $a_i$ are basis elements for $A(n-1,m)$, $b_j$ are basis elements for $B(n-1,m)$ and $d_k$ are basis elements for $D(n-1,m)$ and $q_i,r_j,s_k\in\Q$.

    Suppose $r_j=0$ for all $j$. Then we can change basis for $A(n-1,m)\oplus D(n-1,m)\oplus B(n-1,m-1)$ so that $\partial_1 z=q$ for $q$ a basis element in $A(n-1,m)$, as desired.
    
     Likewise if $q_i=0$ for all $i$. We can change basis for $B(n-1,m)\oplus D(n-1,m) \oplus A(n-1,m+1)$ so that $\partial_1 z=q$ for $q$ a basis element in $B(n-1,m)$, as desired.
     
     In general, we may assume after making a change of basis for $A(n-1,m)\oplus D(n-1,m)\oplus B(n-1,m-1)$, and subsequently $B(n-1,m)\oplus D(n-1,m) \oplus A(n-1,m+1)$ that $\partial z=-\epsilon_1a+\epsilon_2b$ with $a$ a basis vector in $A(n,m)$, $b$ a basis vector in $B(n,m)$, $\epsilon_1,\epsilon_2\in\{0,1\}$. If either $\epsilon_1$ or $\epsilon_2$ is non-zero then we are done. Suppose they are both $1$. Note that in order that $\partial_1^2\neq 0$ we must have that $z$ is not in the image of $\partial_1$. Moreover, In order that $\partial^2=0$, we must have that $z\in A(n,m)$. Indeed by induction we have that $\partial_1(a)\neq\partial_1(b)$ so that in fact $\partial_1(a)=\partial_1(b)=0$ in order that $\partial^2=0$. Consider the basis in which we exchange $\{a+b,b\}$ for $\{a,b\}$. Applying this argument for the remaining basis elements of $C(n,m)$ completes the proof.

    \end{proof}

\end{section}

\begin{section}{Knot Floer Homology Detects $T(2,8),T(2,10)$}
\label{Knot Floer Homology Detects $T(2,8),T(2,10)$}

In this section we prove the following detection results:

\begin{theorem}
\label{HFKT28}
	Knot Floer homology detects $T(2,8)$.
\end{theorem}

	\begin{theorem}\label{HFKT210}
	Knot Floer homology detects $T(2,10)$.
\end{theorem}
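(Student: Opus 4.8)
The plan is to recover the full bigraded link Floer homology $\widehat{\HFL}(L)$ from the diagonally collapsed invariant $\widehat{\HFK}(L)$ using the $E_2$-collapsed classification, and then to recognise $L$ geometrically as a $(2,10)$-cable of the unknot. I will write the argument for $T(2,10)$; the proof of Theorem~\ref{HFKT28} for $T(2,8)$ is identical with $n=4$ in place of $n=5$. Suppose $\widehat{\HFK}(L)\cong\widehat{\HFK}(T(2,10))$. First I would extract the elementary invariants carried by $\widehat{\HFK}$. The graded Euler characteristic of $\widehat{\HFK}$ is the Conway-normalised Alexander polynomial times $(t^{1/2}-t^{-1/2})^{\,|L|-1}$, so $L$ has exactly two components; the Conway polynomial then fixes the linking number $\lk(L_1,L_2)=5$~\cite{hoste1985firstcoefficientoftheconwaypolynomial}; the top nonzero Alexander grading fixes $\chi(L)=-8$ by~\cite{ni2006note}; and rank one in that grading shows $L$ is fibered by~\cite{ghiggini2008knot,ni2007knot}. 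Since $T(2,10)$ is alternating it is $\delta$-thin, and because $\widehat{\HFK}$ is literally the diagonal projection of $\widehat{\HFL}$ up to a Maslov shift, the same single value of $\delta=M-(A_1+A_2)$ is forced on $\widehat{\HFL}(L)$, so $L$ is $\delta$-thin and $\widehat{\CFL}(L)$ is $E_2$-collapsed.

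The heart of the proof is to promote $\widehat{\HFK}(L)$ to $\widehat{\HFL}(L)$. Since $\widehat{\CFL}(L)$ is $E_2$-collapsed, Proposition~\ref{E2collapsed} presents it, up to bifiltered chain homotopy equivalence over $\Q$ and over $\Z/2$, as a direct sum of the standard summands $B_d$, $V_d^l$, $H_d^l$, $X_d^l$, $Y_d^l$ with bigrading shifts $[i,j]$. I would then pin down these shifts with the constraints at hand: collapsing the two Alexander gradings onto their diagonal must reproduce $\widehat{\HFK}(T(2,10))$; the total homology of $\widehat{\CFL}(L)$ is $\widehat{\HF}(S^3)\otimes V\cong\F_0\oplus\F_{-1}$ for a two-component link, which forces exactly one staircase in each internal grading $0$ and $-1$ and no other $X_d^l$ or $Y_d^l$ summands; and link Floer homology must be symmetric under interchanging $L_1\leftrightarrow L_2$ and under conjugation of the bigrading. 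To bound how wide the resulting polytope can be I would feed in the two geometric inputs from the previous section: the rank in the maximal $A_i$ grading determines, via Martin's braid detection~\cite{GageT26} using that $L$ is fibered and non-split, whether $L-L_i$ is braided about $L_i$, and when it is, Proposition~\ref{Braidpolytope} confines the dual Thurston polytope to the strip $(-1,1)$ along the $A_2$ axis. Together these rule out the over-wide candidate polytopes and leave $\widehat{\HFL}(L)\cong\widehat{\HFL}(T(2,10))$; in particular $\widehat{\HFL}(L)$ is supported on the three diagonals $A_2=A_1$ and $A_2=A_1\pm 1$.

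With the multi-graded invariant in hand I would finish as in the complementary (unknot) case of Lemma~\ref{algebraiccharachterizationofcables}. Support on the three diagonals shows that, after reversing the orientation of one component, $L$ bounds an Euler characteristic zero surface with two non-disk boundary components, i.e. an annulus; hence the two components are parallel and $L$ is the $(2,2m)$-cable of its core knot $K$. The linking number $5$ gives $m=5$, so $L\cong K_{2,10}$. Finally $\chi(L)=-8=2-2\cdot 5$ is exactly the Euler characteristic of the unknot cable, whereas a genus-one fibered companion would give $\chi=-10$ by Lemma~\ref{genusof22ncables}; thus $K$ is the unknot and $L\cong T(2,10)$.

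I expect the reconstruction of the second paragraph to be the main obstacle. Many genuinely distinct bigraded groups collapse to the same diagonal $\widehat{\HFK}$, and although Proposition~\ref{E2collapsed} cuts the possibilities down to staircases and boxes, fixing the shifts $[i,j]$ exactly and discarding the wider polytopes is delicate; it is precisely this bookkeeping, together with the interplay of Martin's braid detection and the Thurston-polytope bound of Proposition~\ref{Braidpolytope}, that is likely to confine the method to the small cases $T(2,8)$ and $T(2,10)$ rather than to all $T(2,2n)$.
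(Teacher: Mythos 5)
Your plan has the same skeleton as the paper's proof: establish that $L$ has two components with linking number $5$, that $\widehat{\HFK}(L)$ is $\delta$-thin so $\widehat{\CFL}(L)$ is $E_2$-collapsed, and then use Proposition~\ref{E2collapsed} together with Martin's braid detection and Proposition~\ref{Braidpolytope} to force $\widehat{\HFL}(L)\cong\widehat{\HFL}(T(2,10))$. But the decisive step --- actually reconstructing $\widehat{\HFL}(L)$ --- is exactly where you stop, and the constraints you list do not suffice to carry it out. The total-homology constraint ($\F_0\oplus\F_{-1}$) eliminates only superfluous $X_d^l$ and $Y_d^l$ staircases; it says nothing about $V_d^l$ and $H_d^l$ summands, which are acyclic in the total complex and so invisible to that constraint. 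Those summands are precisely what record knottedness of the individual components, and ruling them out is the content of the paper's Lemma~\ref{T22nunknottedcomponents}: one first observes that the rank bound forces the non-staircase part to consist of $B_d$ summands (with $d$ even) and $V^1_d[x,y]\oplus V^1_{d-1}[x,y-1]$ or $H^1_d[x,y]\oplus H^1_{d-1}[x-1,y]$ pairs, then takes a putative $V$-pair with $x$ maximal and $d$ minimal and derives a Maslov-parity contradiction among the generators surviving the spectral sequence to $\widehat{\HFK}(L_1)\otimes V$. Without this lemma you do not know the components are unknotted, and your appeal to braid detection is then circular: Martin's criterion as stated in the paper requires the \emph{component} $L_i$ to be fibered (not merely $L$), which you only get once the components are known to be unknots, and the input that the rank in the maximal $A_i$ grading is $2$ is likewise a consequence of the structure lemma you have not established. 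So the chain ``symmetry + total homology + braid detection + Proposition~\ref{Braidpolytope} $\Rightarrow$ over-wide polytopes excluded'' has a genuine hole at its first link --- the hole you yourself flag as ``the main obstacle'' in your closing paragraph. The paper fills it: Lemma~\ref{T22nunknottedcomponents} determines the complex up to a single box $B_{-4}[x+\frac{1}{2},\frac{1}{2}-x]$, braid detection pins the maximal $A_i$ grading and forces $x\in\{-1,0,1\}$, and Proposition~\ref{Braidpolytope} kills $x=\pm 1$.

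For completeness: once $\widehat{\HFL}(L)\cong\widehat{\HFL}(T(2,10))$ is in hand, your geometric finish (annulus after reversing one orientation, hence a $(2,10)$-cable, with the Euler characteristic pinning the companion) is a legitimate alternative to the paper's one-line appeal to the link Floer detection result of~\cite{binns2020knot}, though as written it only excludes genus-one companions; you should note that a companion of any genus $g\geq 1$ gives $\chi\leq -10<-8$, or more simply that the two components of a $2$-cable are parallel copies of the companion and are by then known to be unknots. That endgame difference is cosmetic; the substantive missing piece is the unknotted-components structure lemma above.
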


From here the following corollary follows immediately from work in~\cite{binns2020knot}.

\begin{corollary}
Annular Khovanov homology detects the closures of the braids $\sigma_1\sigma_2\dots\sigma_7$ and $\sigma_1\sigma_2\dots\sigma_{9}$, where $\sigma_i$ are the standard Artin generators of the braid group.
\end{corollary}

Here Annular Khovanov homology is a combinatorial invariant of links in a thickened annulus. For details we refer the reader to~\cite{binns2020knot}.
 
For the readers convenience first we recall that:
    $$\widehat{\HFK}(T(2,2n),i)\cong\begin{cases}\F_{\frac{1+2i-2n}{2}}&\text{ for } i=\pm n\\
    \F^2_{\frac{1+2i-2n}{2}}&\text{ for } -n<i<n\\
    0&\text{ otherwise}
    
    \end{cases}$$
    
    Here $\F$ can be $\Q,\Z$ or $\Z_2$.

We start by showing that in general if $\widehat{\HFK}(L)\cong\widehat{\HFK}(T(2,2n))$ then $L$ consists of two unknotted components and $\widehat{\CFL}(L)$ is of an especially simple form.
\begin{lemma}\label{T22nunknottedcomponents}
    Suppose $\widehat{\HFK}(L;\F)\cong\widehat{\HFK}(T(2,2n);\F)$. Then $(\widehat{\CFL}(L),\partial)$ consists of  $B_d$ summands and a single $Y_0^0[\frac{n}{2},\frac{n}{2}]\oplus Y_{-1}^1[\frac{n-1}{2},\frac{n-1}{2}]$ summand, and $L$ has unknotted components.
\end{lemma}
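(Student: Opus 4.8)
The plan is to feed the hypothesis into the $E_2$-collapsed structure theorem (Proposition~\ref{E2collapsed}) and then use the symmetries of link Floer homology, together with a grading-anchored analysis of the support, to pin down the bifiltered chain homotopy type. First I would extract the elementary consequences of $\widehat{\HFK}(L;\F)\cong\widehat{\HFK}(T(2,2n);\F)$. The graded Euler characteristic of $\widehat{\HFK}(L)$ is the Conway polynomial $\nabla_L$, and $\nabla_{T(2,2n)}(z)=nz+O(z^{3})$ has $z$-adic valuation exactly one; as $z^{c-1}\mid\nabla_L$ for a $c$-component link while $\nabla_L(0)\neq 0$ forces $c=1$, we conclude $L$ has two components with $\lk(L_1,L_2)=n$. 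Because every generator of $\widehat{\HFK}(T(2,2n))$ lies on a single diagonal, $L$ is $\delta$-thin, so $\widehat{\CFL}(L)$ is $E_2$-collapsed and Proposition~\ref{E2collapsed} splits it into summands of the types $B_d,V_d^l,H_d^l,X_d^l,Y_d^l$ pictured in Figure~\ref{Summands}.

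Next I would cut down the list. Since the total complex $\widehat{\CFL}(L)$ of a two-component link has homology $\F_0\oplus\F_{-1}$, and $B,V,H$ are acyclic while each $X/Y$ carries a one-dimensional homology in its own Maslov grading, there is exactly one homology-carrying summand in Maslov grading $0$ and one in grading $-1$, and no further $X/Y$ summands. Thinness forces both $\partial_1$ and $\partial_2$ to drop the total Alexander grading $A=A_1+A_2$ by exactly one, so the associated graded of the total-$A$ filtration is trivial and $\widehat{\HFK}(L)$ is precisely the anti-diagonal collapse of $\widehat{\HFL}(L)$; in particular all generators in a fixed total grading share one Maslov grading, and the extremal grading $A=n$ holds a single generator. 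That generator is a cycle that nothing in higher filtration can hit, carries the top Maslov grading, and survives to homology, so it is the Maslov-zero class and splits off as $Y_0^0$ at the corner $[\tfrac n2,\tfrac n2]$.

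The heart of the proof is to show that the support of $\widehat{\HFL}(L)$ is the width-one diagonal strip $|A_1-A_2|\le 1$. The rank-one extremal gradings $A=\pm n$ pin a single generator at each swap-fixed corner $(\pm\tfrac n2,\pm\tfrac n2)$. Working up from the bottom corner, which must be a boundary, the only length-one (hence $E_2$-collapsed) differentials that can hit it issue from $(-\tfrac n2+1,-\tfrac n2)$ and its $A_1\!\leftrightarrow\!A_2$ mirror; as $\rank\widehat{\HFK}(L,-n+1)=2$, these are all the generators there, so that level is thin. I would then induct upward: at each grading the rank-two bound, the swap and conjugation symmetries, and the vanishing of homology below the top two gradings force the generators onto the strip and organize the length-one arrows into $B_d$ boxes together with one staircase, whose surviving Maslov $(-1)$ class is the indecomposable three-generator piece $Y_{-1}^1[\tfrac{n-1}2,\tfrac{n-1}2]$; in particular no $V_d^l$ or $H_d^l$ summands occur. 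Finally, thinness bounds $\max A_i=\tfrac n2$, so the genus bound from the link Floer polytope (Ozsv\'ath--Szab\'o) gives $g(L_i)=0$ and each component is unknotted.

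The step I expect to be the genuine obstacle is the support analysis above. The point is that $\widehat{\HFK}(L)$ only records the total-$A$ collapse and does not by itself determine the finer bigraded complex, so one cannot exclude $V,H$ (or $X$) summands, nor fix the off-diagonal positions, by rank bookkeeping alone: the piece $Y_{-1}^1$ and the honestly split pair ``$Y_{-1}^0\oplus$ domino'' collapse to the same thing, yet are distinct as bifiltered complexes. The real work is the inductive control of the support anchored at the rank-one corners, using the length-one differentials and both symmetries, with the Thurston-norm input (compare Proposition~\ref{Braidpolytope}) converting ``thin support'' into ``unknotted components.''
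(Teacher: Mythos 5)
Your reduction to Proposition~\ref{E2collapsed}, your count of the homology-carrying summands (exactly one $X/Y$ piece in Maslov grading $0$ and one in grading $-1$, since the total homology is $\F_0\oplus\F_{-1}$), and your observation that thinness makes $\widehat{\HFK}(L)$ the anti-diagonal collapse of $\widehat{\HFL}(L)$ all match the paper; your Conway-polynomial valuation argument for ``two components, linking number $n$'' is a reasonable substitute for the paper's route via the spectral sequence to $\widehat{\HF}(\#^{m-1}S^1\times S^2)$ and Maslov-grading parity. But there are two genuine gaps. First, the symmetry you repeatedly invoke does not exist: link Floer homology carries only the conjugation symmetry $\widehat{\HFL}_m(L)(A_1,A_2)\cong\widehat{\HFL}_{m-2A_1-2A_2}(L)(-A_1,-A_2)$; invariance under the swap $A_1\leftrightarrow A_2$ holds only for links admitting an isotopy exchanging the components, and for the unknown $L$ exchangeability is part of what must be proved, not a hypothesis. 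So anchoring the extremal generators at ``swap-fixed corners'' $(\pm\frac{n}{2},\pm\frac{n}{2})$, and the upward induction that uses ``the swap and conjugation symmetries,'' has no foundation. (Your assertion that the top-degree generator ``is a cycle'' also needs an argument --- the top corner of a $B_d$ box is not one --- though this is repairable: by thinness the nonzero Maslov-$0$ class of the total homology can only be supported on the unique $A=n$ generator, which is therefore forced to be a cycle that is not a boundary.)

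Second, and more structurally, your route to unknottedness --- establish support in the strip $|A_1-A_2|\le 1$, deduce $\max A_i=\frac{n}{2}$, then apply the genus bound --- attempts to prove something the hypothesis does not imply. The bigraded positions of the $B_d$ summands are invisible to $\widehat{\HFK}$: a box contributes even rank to every line $A_i=c$, dies in every spectral sequence to $\widehat{\HFK}(L_i)\otimes V$, and is constrained only to occur in conjugation-symmetric positions, so a box $B_d[x,-x]$ off the diagonal is consistent with all of your algebraic inputs. The paper is explicit about this: in the proof of Theorem~\ref{HFKT28} the present lemma pins down $\widehat{\CFL}(L)$ only ``up to a choice of $x$ in a summand $B_{-4}[x,-x]$,'' the accompanying figure exhibits exactly such an off-strip complex with the correct $\widehat{\HFK}$, and the values $x=\pm1$ are excluded only afterwards, using Martin's braid-detection result~\cite{GageT26} together with Proposition~\ref{Braidpolytope} --- geometric tools that presuppose a fibered (there, unknotted) component and a braid axis, so feeding them into the proof of this lemma, as your last paragraph suggests, would be circular. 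The paper's actual mechanism for unknottedness needs no support control: since each Alexander grading has rank at most two, thinness forces the non-$Y$ summands to be boxes or conjugate pairs $V^1_d\oplus V^1_{d-1}$ and $H^1_d\oplus H^1_{d-1}$ with $d$ even; the $V$-pairs are precisely the acyclic summands persisting to $\widehat{\HFK}(L_1)\otimes V$, and a Maslov-parity contradiction at the extremal surviving $A_1$-grading (an even-graded survivor in the minimal grading would require an odd-graded survivor one grading lower) rules out all $V$-pairs, and symmetrically all $H$-pairs; then the surviving rank is two and $\rank(\widehat{\HFK}(L_i))=1$ for both components. Without replacing your strip-support induction by an argument of this kind, your proposal does not close.
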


Here $\F$ is either $\Q$ or $\Z_2$.

\begin{proof}[Proof of Lemma~\ref{T22nunknottedcomponents}]
Let $L$ be as in the Lemma, and $m$ be the number of components of $L$. We first show that $m=2$. To see this note that the maximal Maslov grading of a generator of $\widehat{\HFK}(L)$ is $\frac{1}{2}$, so since $\widehat{\HFK}(L)$ admits a spectral sequence to $\widehat{\HF}(\#^{m-1}S^1\times S^2)$, $L$ can have at most two components. $L$ then has exactly $2$ components since the Maslov grading of $\widehat{\HFK}(L)$ is $\Z+\frac{m-1}{2}$ valued.

Let $L_1$ and $L_2$ be the two components of $L$. It follows that there is a unique Maslov grading $0$ generator. Consider $\widehat{\HFL}(L)$. Since the linking number is $n$ and the Alexander grading of the Maslov index $0$ generator is $n$, it follows that there is a summand $Y_0^0[\frac{n}{2},\frac{n}{2}]$ and since there are two Maslov index $-1$ generators there is a summand $Y_{-1}^1[\frac{n-1}{2},\frac{n-1}{2}]$. As observed in the proof of~\cite[Theorem 12.1]{HolomorphicdiskslinkinvariantsandthemultivariableAlexanderpolynomial}, $\widehat{\HFL}(L)$ consists of pairs of summands $V^l_{d}(x,y)\oplus V^l_{d-1}(x,y-1)$, $H^l_{d}(x,y)\oplus H^l_{d-1}(x,y-1)$, $Y_0^l(x,y)\oplus Y_{-1}^{l-1}(x+1,y+1)$, $X_0^l(x,y)\oplus X^{l-1}_{-1}(x,y)$, as well as a collection of copies of $B_d$.
Note that for the case at hand the rank in each Alexander grading is at most two, so we have that the rest of the complex is given by summands of the form $B_d$ or $V_d^1[x,y]\oplus V_{d-1}^1[x,y-1]$ or $H_d^1[x,y]\oplus H_{d-1}^1[x-1,y]$. Indeed we readily see that $d$ is even.

Suppose $L_1$ is not unknotted. Consider the $V_d^1[x,y]\oplus V_{d-1}^1[x,y-1]$ summands with maximal $x$. Note that $x>\frac{n}{2}$. Amongst these consider the summands with minimal $d$. It follows that there is a generator in $\widehat{\HFK}(L_1)$ of minimal Alexander grading with Maslov grading $d-2(x-\frac{n}{2})$. Note that this is an even number. But this is a contradiction, since if there is a generator of even Maslov index in $A_1$ grading $\frac{n}{2}-x$ which persists under the spectral sequence to $\widehat{\HFK}(L_1)\otimes V$ then there must be a generator of odd Maslov index in $A_1$ grading $\frac{n}{2}-x-1$, since the summands of $\widehat{\HFL}(L)$ that persist under the spectral sequence to $\widehat{\HFK}(L_1)\otimes V$ are of the form $V_d^1[x,y]\oplus V_{d-1}^1[x,y-1]$ with $d$ even. 

Thus there are no $V_d^1[x,y]\oplus V_{d-1}^1[x,y-1]$ summands. A similar proof shows that there are no $H_d^1[x,y]\oplus H_{d-1}^1[x-1,y]$ summands. Thus the remaining summands are all of the form $B_d$ and each component of $L$ is unknotted, as desired.
\end{proof}

We can now prove the two Knot Floer homology detection results.

\begin{proof}[Proof of Theorem~\ref{HFKT28}]
Suppose $\widehat{\HFK}(L)\cong\widehat{\HFK}(T(2,8))$. Lemma~\ref{T22nunknottedcomponents} implies that $L$ is a two component link with unknotted components, and indeed determines the complex up to a choice of $x$ in a summand $B_{-4}[y,-y]$, by the symmetry of link Floer homology. Also since the link Floer homology is of rank $2$ in at least one of the maximal $A_i$ gradings, by \cite[Proposition 1]{martin2022khovanov} it follows that at least one component is a braid axis, and therefore that the maximal $A_i$ grading is exactly $2$.  It follows that $x\in\{1,0,-1\}$. The $y=1$ case can be obstructed using Proposition~\ref{prop:algebraicni}. Specifically observe that $y$ can be taken to be the generator of Alexander multi-grading $(2,-1)$. The $y=-1$ can be obstructed similarly. The result then follows from the fact that Link Floer homology detects $T(2,8)$~\cite[Theorem 3.2]{binns2020knot}.

\end{proof}

The proof of Theorem~\ref{HFKT210} is virtually identical:

\begin{proof}[Proof of Theorem~\ref{HFKT210}]

Suppose $\widehat{\HFK}(L)\cong\widehat{\HFK}(T(2,10))$. Lemma~\ref{T22nunknottedcomponents} implies that $L$ is a two component link with unknotted components, and indeed determines the complex up to a choice of $y$ in a summand $B_{-4}[y+\frac{1}{2},\frac{1}{2}-y]$, by the symmetry of link Floer homology. Since the link Floer homology is of rank $2$ in at least one of the maximal $A_i$ gradings, it follows that at least one component is a braid axis, and therefore that the maximal $A_i$ grading is exactly $2$. It follows that $y\in\{1,0,-1\}$. 
The $y=1$ case can be obstructed using Proposition~\ref{prop:algebraicni}. Specifically observe that $y$ can be taken to be the generator of Alexander multi-grading $(\frac{5}{2},-\frac{1}{2})$. The $y=-1$ can be obstructed similarly. The result then follows from the fact that Link Floer homology detects $T(2,10)$~\cite[Theorem 3.2]{binns2020knot}.
\end{proof}

\end{section}

\begin{section}{Rank Detection Results for Knot Floer Homology }
\label{Rank Detection Results for Knot Floer homology}
	
In this section we show that there are only a small number of links with knot Floer homology of low rank. In particular we show  the following sequence of theorems:
\begin{theorem}\label{rank4}
	If $\rank(\widehat{\HFK}(L))=4$ then $L$ is the Hopf link or the three component unlink.
\end{theorem}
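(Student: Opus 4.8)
\textbf{Proof proposal for Theorem~\ref{rank4}.}

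The plan is to exploit the fact that $\rank(\widehat{\HFK}(L)) = 4$ is extremely restrictive, and to funnel the analysis through the spectral sequence relating knot Floer homology to $\widehat{\HF}(\#^{m-1} S^1 \times S^2)$, exactly as in the proof of Lemma~\ref{T22nunknottedcomponents}. First I would bound the number of components $m$ of $L$. The total rank of $\widehat{\HF}(\#^{m-1}S^1 \times S^2)$ is $2^{m-1}$, and since $\widehat{\HFK}(L)$ admits a spectral sequence converging to it, we need $4 = \rank(\widehat{\HFK}(L)) \geq 2^{m-1}$, forcing $m \leq 3$. This immediately splits the argument into three cases, $m = 1, 2, 3$, which I would treat in turn.

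For $m = 3$ I expect equality $2^{m-1} = 4$ to force $\widehat{\HFK}(L)$ to be as simple as possible: the spectral sequence must degenerate, so $\widehat{\HFK}(L)$ has the same rank as $\widehat{\HF}(\#^2 S^1 \times S^2)$ and is supported in a single (multi-)Alexander grading, which I would argue must be the zero grading. A non-trivial three component link would have a Seifert genus contribution or non-trivial linking forcing generators in distinct Alexander gradings; ruling this out pins $L$ down to the three component unlink (whose knot Floer homology is indeed $V^{\otimes 2}$ of rank $4$). The cleanest route is to observe that the maximal Alexander grading detects the Thurston norm / genus, so rank $4$ concentrated at the top of the spectral sequence forces every component to be unknotted and pairwise unlinked. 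For $m = 1$ I would use that a knot has $\rank(\widehat{\HFK}(L))$ odd (the Euler characteristic is the Alexander polynomial, evaluating to $\pm 1$ at the unknot, and more directly $\widehat{\HFK}$ of a knot has odd total rank), contradicting rank $4$; hence $L$ is not a knot and the $m=1$ case is vacuous.

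The main case, and the main obstacle, is $m = 2$. Here $2^{m-1} = 2 < 4$, so the spectral sequence need not degenerate and I cannot immediately read off that $L$ is trivial. Instead I would argue that a two component link with $\widehat{\HFK}$ of rank $4$ must be the Hopf link. The strategy is to use the linking number: by the discussion in section~\ref{A Brief Review of Link Floer homology}, knot Floer homology determines the Conway polynomial and hence the linking number $\ell = \lk(L_1, L_2)$. The rank $4$ constraint, together with the Maslov grading taking values in $\Z + \tfrac{m-1}{2} = \Z + \tfrac12$ and the symmetry of $\widehat{\HFK}$, forces the Alexander gradings to be confined to a small window; combined with the genus detection of knot Floer homology this forces each component to be unknotted and the genus of $L$ to be minimal. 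I would then identify the possibilities: either $\ell = 0$, giving the two component unlink (rank $4$, which is consistent), or $|\ell| = 1$, giving the Hopf link. The two component unlink would have $\widehat{\HFK}$ supported in multiple Alexander gradings forming a $2 \times 2$ square of rank one each, whereas rank $4$ with the correct grading distribution matches the Hopf link; I would compare the detailed bigraded structure against $\widehat{\HFK}$ of the Hopf link and the unlink to see which is compatible, concluding that $L$ is the Hopf link.

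The delicate point will be separating the two component unlink from the Hopf link purely from the rank and grading data, and confirming that no other two component link (e.g.\ one with a non-trivial knotted component absorbing the rank) can sneak in. To close this I would invoke genus detection to force unknotted components and the linking number computation to pin down $\ell$, after which only finitely many candidates remain and each is checked directly. I expect the bookkeeping of Maslov and Alexander gradings in the $m=2$ case to be where the real content lies, while the $m = 1$ and $m = 3$ cases are dispatched quickly by parity and by degeneration of the spectral sequence respectively.
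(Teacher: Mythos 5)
Your opening moves match the paper: the spectral sequence to $\widehat{\HF}(\#^{m-1}S^1\times S^2)$ bounds $m\leq 3$, parity of the rank kills $m=1$, and the deduction that each component of a two-component rank-$4$ link is unknotted (rank of a sublink at most $2$, knots have odd rank) is sound. But the two-component case, which you correctly flag as the crux, has a genuine gap: the tools you propose — linking number via the Conway polynomial, genus detection, symmetry, Maslov parity — do not suffice to confine the Alexander gradings. Concretely, consider four generators in Alexander gradings $2,1,-1,-2$ with Maslov gradings $\tfrac12,-\tfrac12,-\tfrac52,-\tfrac72$: this respects the symmetry $\widehat{\HFK}_m(L,A)\cong\widehat{\HFK}_{m-2A}(L,-A)$, has graded Euler characteristic vanishing at $t=1$, and admits a spectral sequence to $\widehat{\HF}(S^1\times S^2)$ (cancel the $-\tfrac52$ generator against the $-\tfrac72$ one, leaving survivors in gradings $\pm\tfrac12$). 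Nothing in your proposal excludes it. The paper excludes it by splitting on $\rank(\widehat{\HFK}(L)(g))\in\{4,2,1\}$ in the top grading $g$: the rank-$1$ case forces $L$ fibered, so the Legendrian (LOSS) invariant is non-zero in grading $g$, and the paper's new Theorem~\ref{HFKrank} then forces rank at least $2$ in grading $g-1$, which with symmetry pushes the total rank to at least $6$ unless $g=1$; the rank-$2$ and rank-$4$ subcases are killed by separate Maslov-grading arguments. This Legendrian input is the paper's essential new ingredient, and your sketch has no substitute for it. Moreover, even after the gradings are pinned to those of $T(2,2)$, ``comparing the detailed bigraded structure'' is not a conclusion — matching $\widehat{\HFK}$ is exactly the botany problem being solved; the paper closes by determining $\widehat{\HFL}(L)$ and invoking the link Floer detection of $T(2,2)$ from~\cite{BG}, and your proposal contains no detection input of this kind.

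Two further corrections. First, your assertion that the two-component unlink has rank $4$ (arranged in a $2\times 2$ square) is false: $\rank\widehat{\HFK}(U_2)=2$, supported in Alexander grading $0$, so the $\ell=0$ unlink branch simply cannot occur at rank $4$ — your confusion here is symptomatic of the unresolved case analysis above, since unknotted components plus a known linking number leave infinitely many candidate links, not finitely many. Second, in the three-component case your mechanism is a non sequitur: degeneration of the spectral sequence constrains ranks, not the Alexander gradings in which generators sit. The paper's Lemma~\ref{unlinkdetection} instead argues through sublinks — each two-component sublink has rank exactly $2$ by the sublink spectral sequence, hence is an unlink by induction, forcing $\widehat{\HFL}(L)$ to be supported at the origin, at which point Ni's Lemma~\ref{alexandergradingcharachterizationofunlinks} concludes. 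That repair is routine; the two-component gap is not.
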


\begin{theorem}\label{rank6}
	Suppose $L$ is a link with $\rank(\widehat{\HFL}(L))=6$. Then $L$ is a disjoint union of a trefoil and an unknot.
\end{theorem}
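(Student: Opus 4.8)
The plan is to first pin down the number of components and then treat each case with the spectral sequences recorded in Section~\ref{A Brief Review of Link Floer homology}, the genus/fiberedness detection of knot Floer homology, and the Thurston polytope bounds of Section~\ref{Technical Results}. First I would note that $\widehat{\HFK}(L)$ admits a spectral sequence to $\widehat{\HF}(\#^{n-1}(S^1\times S^2))$, whose rank is $2^{n-1}$. Since $6$ is even and the knot Floer homology of a knot has odd rank (its spectral sequence converges to $\widehat{\HF}(S^3)=\F$), we must have $n\geq 2$; and $2^{n-1}\leq 6$ forces $n\leq 3$. So $L$ has either two or three components.

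Next I would eliminate $n=3$. If $L$ is split, then a K\"unneth splitting contributes a rank-$2$ factor $V$ together with a two-component sublink of even rank, so $\rank(\widehat{\HFK}(L))$ is divisible by $4$ and cannot equal $6$. If $L$ is non-split, I would apply the spectral sequence $\widehat{\HFL}(L)\Rightarrow\widehat{\HFL}(L-L_i)\otimes V$ for each $i$: the target has rank $2\,\rank(\widehat{\HFL}(L-L_i))\leq 6$, and since $L-L_i$ is a two-component link of even rank, each two-component sublink has rank $2$ and is therefore the two-component unlink. Hence all pairwise linking numbers vanish and every proper sublink is unlinked, so $L$ is a non-split Brunnian link; such links have rank at least $8$ (already the Borromean rings have $\rank=8$), contradicting $\rank=6$. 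Making this last exclusion airtight is one of the two main obstacles.

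For $n=2$ the linking number $\ell$ is determined by the Conway polynomial~\cite{hoste1985firstcoefficientoftheconwaypolynomial}, and the spectral sequence $\widehat{\HFL}(L)\Rightarrow\widehat{\HFK}(L_i)\otimes V$ gives $\rank(\widehat{\HFK}(L_i))\leq 3$; being the rank of a knot it is odd, so it lies in $\{1,3\}$ and each component is an unknot or a trefoil. If some component, say $L_1$, is a trefoil, then deleting $L_2$ gives a spectral sequence whose target $\widehat{\HFK}(L_1)\otimes V$ has rank $2\cdot 3=6=\rank(\widehat{\HFL}(L))$, so it degenerates and $\widehat{\HFL}(L)\cong\widehat{\HFK}(L_1)\otimes V$ is supported in the single grading $A_2=0$. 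By the identification of the link Floer polytope with the Thurston polytope~\cite{ozsvath2008linkFloerThurstonnorm}, the $A_2$-direction then has vanishing Thurston norm, forcing $L_2$ to bound a disk disjoint from $L_1$; thus $L_2$ is a split unknot and $L$ is the disjoint union of a trefoil and an unknot. (In particular both components cannot be trefoils, since support in $A_2=0$ is incompatible with $g(L_2)=1$.)

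The only remaining case is that both components are unknots. If $\ell=0$, the genus-zero constraint pins the support to $(0,0)$, so the Thurston norm vanishes identically and $L$ is the two-component unlink, of rank $2\neq 6$. If $\ell\neq 0$, the maximal $A_i$-grading equals $\tfrac{|\ell|}{2}$, and I would combine Martin's braid detection~\cite{GageT26} with the dual Thurston polytope estimate of Proposition~\ref{Braidpolytope} to force $L$ to be a torus link $T(2,2m)$, whose rank is $4|m|\neq 6$. Either way one reaches a contradiction, so no two-component link with both components unknotted has rank $6$. Assembling the cases shows that $\rank(\widehat{\HFK}(L))=6$ forces $L$ to be the disjoint union of a trefoil and an unknot. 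I expect the hardest parts to be the exclusion of all non-split three-component (Brunnian) links and the exclusion of two-component links with two unknotted components, both of which require the polytope and braid-detection machinery rather than the elementary rank counts used elsewhere.
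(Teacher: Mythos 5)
Your proposal has genuine gaps at two of the three places you yourself flag as delicate, and at one place you do not flag. First, the three-component non-split case: after correctly reducing to a link all of whose two-component sublinks are unlinks, you assert that such a non-split (Brunnian) link must have $\rank(\widehat{\HFK}(L))\geq 8$, citing only the Borromean rings as evidence. No such classification exists to invoke, and nothing in your argument supplies it; as written this case is simply open. The paper avoids the issue entirely: it first proves Lemma~\ref{rank6fibered} (no fibered link has rank six, using the Legendrian rank bound of Theorem~\ref{HFKrank} together with cabling/braid-detection arguments), and then, for non-fibered $L$, runs a link Floer polytope argument --- non-fiberedness forces every face of the polytope to carry at least two generators, so for three components a collection of four faces meeting only in vertices already gives rank at least eight unless $\widehat{\HFL}(L)$ has support at a vertex; and vertex support forces, via the triviality of the spectral sequences to $\widehat{\HFL}(L_i)\otimes V^{n-1}$ and the symmetry of $\widehat{\HFL}$, that $2^n\leq 6$, i.e.\ $n=2$.

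Second, in the two-component case your key deduction fails: collapse of the spectral sequence from $\widehat{\HFL}(L)$ to $\widehat{\HFK}(L_1)\otimes V$ (rank $6=6$) only says no generators cancel; it does \emph{not} localize the $A_2$ grading, so ``$\widehat{\HFL}(L)$ is supported in $A_2=0$'' does not follow, and with it collapses your disk-splitting conclusion and your exclusion of two trefoil components. Relatedly, your claim in the two-unknot, $\ell=0$ case that ``the genus-zero constraint pins the support to $(0,0)$'' is false: by~\cite{ozsvath2008linkFloerThurstonnorm} the width of the support measures the Thurston norm of classes in the exterior of the \emph{whole link}, not the genera of the individual components --- the Whitehead link (two unknots, linking number zero) has $\widehat{\HFL}$ with wide support. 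Finally, in the two-unknot, $\ell\neq 0$ case, ``the maximal $A_i$-grading equals $\tfrac{|\ell|}{2}$'' presupposes rank two in the top grading (Martin's braid criterion~\cite{GageT26}), which you have not established at that point, and braidedness alone does not force $L\cong T(2,2m)$ without producing an annulus. The paper's actual proof --- the fibered exclusion via Theorem~\ref{HFKrank}, then vertex/face bookkeeping on the polytope combined with Lemma~\ref{alexandergradingcharachterizationofunlinks} and braid detection --- sidesteps all three of these steps, and your outline would need substantively new arguments, not just tightening, to close them.
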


\begin{theorem}\label{rank8}
The only links $L$ with $\rank(\widehat{\HFL}(L))=8$ are $T(2,4)$, $T(2,-4)$, the four component unlink, and the disjoint union of a Hopf link and an unknot.
\end{theorem}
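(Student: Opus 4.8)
The plan is to first pin down the number of components $n$ of $L$ and then treat the cases $n=4,3,2$ in turn, with $n=2$ being the heart of the matter. Throughout I use that $\rank(\widehat{\HFL}(L))=\rank(\widehat{\HFK}(L))$, since $\widehat{\HFK}(L)$ is obtained from $\widehat{\HFL}(L)$ by collapsing the multi--Alexander grading onto the diagonal. To bound $n$ I would use the spectral sequence from $\widehat{\HFK}(L)$ to $\widehat{\HF}(\#^{n-1}S^1\times S^2)$, whose $E_\infty$ page has total rank $2^{n-1}$. On the one hand this gives $\rank(\widehat{\HFK}(L))\geq 2^{n-1}$, so $8\geq 2^{n-1}$ and $n\leq 4$; on the other hand the differentials cancel in pairs, so $\rank(\widehat{\HFK}(L))\equiv 2^{n-1}\pmod 2$, and since $8$ is even this forces $n\geq 2$. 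Hence $n\in\{2,3,4\}$, and the reader should note there is deliberately no one--component answer.

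For $n=4$ we sit at the minimal possible rank $2^{n-1}=8$, so the spectral sequence above degenerates; combined with Proposition~\ref{generalrankbound} (which rules out any odd multi--Alexander grading, as that would force rank $\geq 2^{4}=16$) and the conjugation symmetry, this pins the support of $\widehat{\HFL}(L)$ to a single multi--Alexander grading, so the link Floer polytope is a point. Since link Floer homology detects the Thurston norm~\cite{ozsvath2008linkFloerThurstonnorm}, $L$ has trivial Thurston norm and unknotted, split components, i.e. $L$ is the four component unlink. For $n=3$ I would use the disjoint--union Künneth formula $\rank(\widehat{\HFK}(K_1\sqcup\cdots\sqcup K_m))=2^{m-1}\prod_i\rank(\widehat{\HFK}(K_i))$: a totally split three component link then has rank $\equiv 4\pmod 8$, excluding rank $8$, while a link split as $L'\sqcup K$ with $L'$ genuinely two--component and $K$ a knot has rank $2\cdot\rank(\widehat{\HFK}(L'))\cdot\rank(\widehat{\HFK}(K))$. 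Granting that rank $8$ produces a split unknotted component, this forces $\rank(\widehat{\HFK}(K))=1$ (so $K$ is an unknot) and $\rank(\widehat{\HFK}(L'))=4$, whence $L'$ is the Hopf link by Theorem~\ref{rank4}, giving $L=\text{Hopf}\sqcup U$.

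The case $n=2$ is the crux. First, a split two--component link has rank $2r_1r_2$ with each $r_i$ odd, hence $\equiv 2\pmod 4$; since $8\equiv 0\pmod 4$, $L$ is non-split, and I would read off its linking number from the Conway polynomial via~\cite{hoste1985firstcoefficientoftheconwaypolynomial}. I would then run the argument behind Lemma~\ref{T22nunknottedcomponents} — the $E_2$--collapsed structure theorem Proposition~\ref{E2collapsed} together with the spectral sequences to $\widehat{\HFK}(L_i)\otimes V$ — to show from the bare rank that both components are unknotted and that $\widehat{\HFL}(L)$ consists of $B_d$ summands plus a single $Y_0\oplus Y_{-1}$ summand. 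Since $\widehat{\HFL}(L)$ must have rank $2$ in a maximal $A_i$ grading, Martin's braid detection~\cite{GageT26} presents $L$ as a closed $2$--braid, i.e. a torus link $T(2,2k)$, and matching $\rank(\widehat{\HFK}(T(2,2k)))=4|k|$ to $8$ forces $|k|=2$. Finally I would invoke the knot Floer detection of $T(2,\pm4)$, proved exactly as Theorem~\ref{HFKT28}: Lemma~\ref{T22nunknottedcomponents} leaves only the placement $x$ in a single $B_{-4}[x,-x]$ summand, braid detection caps the maximal $A_1$ grading, and Proposition~\ref{Braidpolytope} excludes $x=\pm1$, leaving $L\cong T(2,4)$ or $T(2,-4)$.

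The main obstacle is precisely this $n=2$ step: extracting, from nothing but the total rank being $8$, both that the two components are unknotted and that the only admissible linking pattern is the full twisting of a $2$--braid, so that no other non-split two--component link survives. The combination of the $E_2$--collapsed classification Proposition~\ref{E2collapsed}, braid detection~\cite{GageT26}, and the dual Thurston polytope bound Proposition~\ref{Braidpolytope} is what makes this tractable. A secondary difficulty is the exclusion of genuinely non-split three--component links in the $n=3$ case — the delicate point also addressed independently in~\cite{kim2020links} — which I would handle by the same mixture of split--parity constraints and the braid/polytope restrictions used above.
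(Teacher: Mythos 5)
Your preliminary reductions (the bound $n\in\{2,3,4\}$ from the spectral sequence to $\widehat{\HF}(\#^{n-1}S^1\times S^2)$, the parity argument, non-splitness of a rank-eight two-component link via rank $\equiv 2 \pmod 4$ for split links, and the linking number from the Conway polynomial) are correct, but the core of your argument has a genuine gap in the two-component case. You propose to ``run the argument behind Lemma~\ref{T22nunknottedcomponents} \dots from the bare rank,'' but that lemma, and the structure theorem Proposition~\ref{E2collapsed} on which it rests, require $\widehat{\CFL}(L)$ to be $E_2$-collapsed, i.e.\ $\widehat{\HFK}(L)$ to be $\delta$-thin. Thinness is available in the $T(2,8)$ and $T(2,10)$ detection theorems because there one assumes $\widehat{\HFK}(L)\cong\widehat{\HFK}(T(2,2n))$, which is thin; it does \emph{not} follow from $\rank(\widehat{\HFK}(L))=8$. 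A priori a rank-eight two-component link can carry differentials of length $\geq 2$ --- for instance a genus-two fibered configuration with top generator in bi-Alexander grading $(x,g-x)$ --- and these are exactly the cases the paper's Proposition~\ref{rank8fibered} must exclude by hand, using the Legendrian rank bound of Theorem~\ref{HFKrank}/Lemma~\ref{chainlevel} together with Maslov-grading bookkeeping and the spectral sequences to $\widehat{\HFK}(L_i)\otimes V$. Your proposal never invokes Theorem~\ref{HFKrank} and never makes the paper's fibered/non-fibered dichotomy, so it has no mechanism for ruling out these complexes; relatedly, your assertion that $\widehat{\HFL}(L)$ ``must have rank $2$ in a maximal $A_i$ grading'' (needed to trigger Martin's braid detection) is precisely what requires proof --- in the paper's non-fibered analysis the rank at a vertex of the link Floer polytope can be four, and that branch leads to split links, not braids.

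The other two cases also fall short as written. For $n=3$ you say ``granting that rank $8$ produces a split unknotted component'' --- but that is the entire difficulty of the case. The paper handles it by showing each two-component sublink has $\rank(\widehat{\HFK})\leq 4$ via the component-deleting spectral sequence, hence is a Hopf link or unlink by Theorem~\ref{rank4}, and then running linking-number, braid-axis, and multi-grading support arguments; your proposed substitutes (split-parity constraints and Proposition~\ref{Braidpolytope}) do not apply, since the latter is stated only for two-component links with an unknotted braid axis. For $n=4$, evenness of all multi-graded ranks (Proposition~\ref{generalrankbound}) plus conjugation symmetry does not pin the support of $\widehat{\HFL}(L)$ to a single multi-Alexander grading as you claim --- support at $\pm A$ and $\pm B$, each of rank two, is consistent with everything you cite; this case is easily repaired by quoting Lemma~\ref{unlinkdetection}, which is exactly how the paper disposes of it. So while your scaffolding matches the paper's, the theorem is not established: the $n=2$ heart of the argument rests on a thinness hypothesis you do not have, and the non-split $n=3$ analysis is assumed rather than proved.
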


Indeed, this yields a complete list of links with at least two components of rank at most eight, in combination with the following lemma:

\begin{lemma}\label{unlinkdetection}
Suppose $L$ is an $n+1$ component link with $\rank(\widehat{\HFK}(L))= 2^n$. Then $L$ is an unlink.
\end{lemma}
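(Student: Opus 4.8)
The plan is to induct on the number of components $m=n+1$. The base case $m=1$ is the statement that $\widehat{\HFK}$ of rank one detects the unknot, a result of Ozsv\'ath--Szab\'o. For the inductive step I would combine two rank inequalities. First, every $m$-component link $L'$ satisfies $\rank(\widehat{\HFK}(L'))\geq 2^{m-1}$, since there is a spectral sequence from $\widehat{\HFK}(L')$ to $\widehat{\HF}(\#^{m-1}(S^1\times S^2))$, whose target has rank $2^{m-1}$. Second, for each component $L_i$ of $L$ the component-deleting spectral sequence from $\widehat{\HFL}(L)$ to $\widehat{\HFL}(L-L_i)\otimes V$ (recall $V\cong\F_0\oplus\F_{-1}$) gives $\rank(\widehat{\HFL}(L))\geq 2\,\rank(\widehat{\HFL}(L-L_i))$. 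Since $\rank\widehat{\HFK}=\rank\widehat{\HFL}$, chaining these yields $2^{n}=\rank(\widehat{\HFK}(L))\geq 2\,\rank(\widehat{\HFK}(L-L_i))\geq 2\cdot 2^{n-1}=2^{n}$. Hence every inequality is an equality: each $L-L_i$ has $\widehat{\HFK}$ of rank $2^{n-1}$ and is therefore an $n$-component unlink by the inductive hypothesis, and moreover each component-deleting spectral sequence collapses (no nonzero differential on any page), so that $\widehat{\HFL}(L)\cong\widehat{\HFL}(L-L_i)\otimes V$ as multi-graded vector spaces.

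Next I would extract the multi-Alexander support. Because $L-L_i$ is an unlink, $\widehat{\HFL}(L-L_i)\otimes V$ is supported in the single multi-Alexander grading $\mathbf 0$ in the remaining $A_j$, $j\neq i$, after the linking shifts $[\frac{\lk(L_i,L_j)}{2}]$, which vanish since all relevant linking numbers are zero for $m\geq 3$. As the component-deleting spectral sequence is induced by the $A_i$-filtration, its differentials preserve the $A_j$ gradings for $j\neq i$; combined with the collapse this forces $\widehat{\HFL}(L)$ to be supported at $A_j=0$ for every $j\neq i$. Letting $i$ range over all components (possible since $m\geq 2$) pins every coordinate, so $\widehat{\HFL}(L)$ is supported in a single multi-Alexander grading and the link Floer homology polytope is a point. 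By Ozsv\'ath--Szab\'o's identification of this polytope with the Thurston polytope of the exterior, the Thurston norm of the exterior $X_L$ vanishes identically. For $m=2$ one argues identically, the polytope being the single point $(\frac{\ell}{2},\frac{\ell}{2})$ with $\ell=\lk(L_1,L_2)$, which still has vanishing norm.

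Finally I would convert vanishing Thurston norm into a splitting. The homology class $e_m\in H_2(X_L,\partial X_L)$ carrying the Seifert longitude of $L_m$ has Thurston norm zero, so it is represented by a norm-minimizing surface with $\chi_-=0$; after discarding closed and inessential pieces, the component bounded by the longitude of $L_m$ is a disk disjoint from the other components. Thus $L_m$ bounds an embedded disk in the complement of $L-L_m$, so $L$ is the split union $(L-L_m)\sqcup U$. Since $L-L_m$ is the $n$-component unlink, $L$ is the $(n+1)$-component unlink, completing the induction. The main obstacle is the middle paragraph: carefully justifying that the collapse of the spectral sequence together with its grading behavior forces the polytope to degenerate to a point, and invoking the polytope--Thurston-norm correspondence in a form valid for a priori possibly split links; the disk extraction in the last step also requires the usual care in simplifying a taut representative so that the relevant boundary curve bounds an honest disk.
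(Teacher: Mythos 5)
Your induction, the two rank inequalities, the forced collapse of the component-deleting spectral sequences, and the conclusion that $\widehat{\HFL}(L)$ is supported in a single multi-Alexander grading reproduce the paper's proof of this lemma essentially verbatim, and up to that point your argument is correct (including the observation that all pairwise linking numbers vanish for $m\geq 3$ because every pair of components lies in an unlink sublink, and the separate treatment of $m=2$). Where you diverge is the endgame, and there the argument as written has a genuine gap. The Ozsv\'ath--Szab\'o polytope/Thurston-norm correspondence is not the statement ``point polytope implies vanishing norm'': the theorem contains a meridian correction term, reading $x(PD[h])+\sum_i\left|\langle h,\mu_i\rangle\right| = 2\max_{s\in P}\langle h,s\rangle$, so a single-point polytope would force $x(PD[h])=-\sum_i\left|\langle h,\mu_i\rangle\right|<0$ for suitable $h$, which is absurd since $x\geq 0$. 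The resolution is that the theorem excludes exactly the degenerate links you are trying to detect: it fails for links with trivial (split unknotted) components --- already for the unknot, whose polytope is the single point at the origin while the left-hand side above equals $1$. Since your $L$ is, in the end, an unlink, you are invoking the correspondence precisely where it is unavailable; you flagged this caveat as ``the main obstacle'' but did not repair it, and your final disk-extraction paragraph rests on the faulty ``norm vanishes identically'' claim.

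The repair is short, and it is what the paper does: once $\widehat{\HFK}(L)$ is supported in a single Alexander grading, symmetry places that grading at $0$, and Ni's detection of the maximal Euler characteristic of surfaces bounding $L$ (\cite{ni2006note}, valid for all links, split or not) gives $0=n-\sum_{i=1}^m(2-2g_i-n_i)$ for a maximal-$\chi$ Seifert surface with $m$ components; since $n\geq m$ this forces $n=m$ and all $g_i=0$, so $L$ bounds $n$ disjoint disks and is an unlink --- this is exactly Lemma~\ref{alexandergradingcharachterizationofunlinks}, which the paper cites at this point. Alternatively, staying within your framework, one can argue by contraposition: for a link with no trivial component the polytope is never a single point, so a point polytope forces a trivial component, which you split off (using $\widehat{\HFK}(L'\sqcup U)\cong\widehat{\HFK}(L')\otimes V$, which halves the rank) and absorb into the induction. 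Either patch closes the gap; without one, your final two paragraphs do not stand.
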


We can also provide a partial classification of links with knot Floer homology of rank ten:

\begin{proposition}\label{3componentrank12}
There is no three component link with $\rank(\widehat{\HFK}(L))=10$.
\end{proposition}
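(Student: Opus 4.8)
The plan is to show that a three component link $L$ with $\rank(\widehat{\HFK}(L)) = 10$ cannot exist by a parity and spectral sequence argument analogous to the proofs of Theorems~\ref{rank4}, \ref{rank6}, and \ref{rank8}. First I would record the two main structural constraints. Since $L$ has three components, $\widehat{\HFK}(L)$ admits a spectral sequence to $\widehat{\HF}(\#^2 S^1\times S^2) \cong V^{\otimes 2}$, which has rank $4$; hence $\rank(\widehat{\HFK}(L)) \geq 4$, consistent with $10$, but more importantly the surviving page must have rank exactly $4$, giving strong constraints on how the $10$ generators pair up under the differentials. Second, by the remark following Proposition~\ref{generalrankbound}, if $\rank(\widehat{\HFK}(L,A))$ is odd for some total Alexander grading $A$, then there is a multi-Alexander grading where the rank of $\widehat{\HFL}(L)$ is odd, which forces $\rank(\widehat{\HFK}(L)) \geq 2^3 = 8$; combined with the $2^n$-type divisibility I would extract parity information about how the rank $10$ distributes across Alexander gradings.

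The key steps, in order, are as follows. First I would argue that $\widehat{\HFL}(L)$ must have even rank in each multi-Alexander grading hyperplane (from the spectral sequences relating $\widehat{\HFL}(L)$ to $\widehat{\HFL}(L - L_i)\otimes V$), and use Proposition~\ref{generalrankbound} to see that an odd rank in any single multi-Alexander grading would already force rank at least $2^3 + 2 = 10$ with equality only in a very rigid configuration. Second, I would analyze the possible total ranks: since $L$ is a three component link, either all components are unknotted and unlinked (forcing rank $2^2 = 4$, the unlink, handled by Lemma~\ref{unlinkdetection}) or there is nontrivial topology. I would rule out rank $10$ by showing the Euler characteristic constraints coming from the multivariable Alexander polynomial (equivalently the Conway polynomial and linking numbers) are incompatible with exactly $10$ generators distributed with the required even-rank-per-hyperplane condition and the symmetry $\widehat{\HFL}_m(L)(A_1,A_2,A_3) \cong \widehat{\HFL}_{m - 2\sum A_i}(L)(-A_1,-A_2,-A_3)$.

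The cleanest route is probably to combine the parity bound with the observation from Proposition~\ref{generalrankbound}: if some multi-Alexander grading of $\widehat{\HFL}(L)$ has odd rank $2m+1$, then $\rank(\widehat{\HFK}(L)) \geq 2^3 + 2m = 8 + 2m$, so rank $10$ forces $m \leq 1$, i.e. at most one hyperplane of odd rank and it must have rank exactly $3$. But the recursive argument in the proof of Proposition~\ref{generalrankbound} shows a single odd-rank grading propagates odd ranks across all $2^n$ coordinate hyperplane intersections, generating at least $2^3 - 1 = 7$ additional generators beyond the first odd stack; tallying these against a budget of $10$ total generators, together with the requirement that the whole complex survive to rank $4$ on the $E_\infty$ page of the $\widehat{\HF}(\#^2 S^1\times S^2)$ spectral sequence, should produce a contradiction. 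I would therefore conclude every multi-Alexander grading has even rank, whence $\rank(\widehat{\HFK}(L))$ is forced to be of the form $2^3 = 8$ (giving the unlink) and the value $10 = 8 + 2$ is inaccessible without an odd-rank hyperplane, a contradiction.

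The main obstacle I anticipate is bookkeeping the interaction between the three coordinate spectral sequences simultaneously: each $\widehat{\HFL}(L) \rightsquigarrow \widehat{\HFL}(L - L_i) \otimes V$ imposes even-rank conditions on two-dimensional slices, and ensuring these constraints are jointly consistent with a total rank of exactly $10$ — rather than merely pairwise consistent — requires care. The symmetry and the integrality of the Maslov gradings (the Maslov grading lives in $\Z + \tfrac{m-1}{2} = \Z + 1$ for $m = 3$) should pin down enough structure, but verifying that no exotic distribution of $10$ generators evades all the parity constraints is where the real work lies, and I expect to handle it by exhausting the small number of possible rank-distributions across the relevant Alexander gradings.
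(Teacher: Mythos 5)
Your proposal has a genuine gap: the concluding deduction is a non sequitur, and the purely algebraic constraints you assemble do not exclude rank $10$. Proposition~\ref{generalrankbound} gives only a lower bound, and it is consistent with equality: an odd multi-Alexander grading of rank $3$ yields $\rank(\widehat{\HFK}(L))\geq 2^3+2=10$, which is precisely the value at issue, so no contradiction follows; moreover odd gradings of rank $1$ (your $m=0$) are also permitted, since $10\geq 8$, so your claim that an odd grading ``must have rank exactly $3$'' and that the propagation argument overshoots the budget does not hold. Worse, the final step --- ``every multi-Alexander grading has even rank, whence the rank is forced to be $2^3=8$'' --- is false: evenness per grading does not constrain the total to be a power of two. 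A configuration of five rank-two multi-gradings arranged symmetrically (for instance the pattern of the disjoint union of a Hopf link and an unknot, which contributes four rank-two gradings, together with one additional self-symmetric rank-two stack) satisfies every condition you impose: even rank in each hyperplane slice, the symmetry $\widehat{\HFL}_m(L)(A_1,A_2,A_3)\cong\widehat{\HFL}_{m-2\sum A_i}(L)(-A_1,-A_2,-A_3)$, the correct Maslov grading lattice, and survival of rank $4$ on the $E_\infty$ page of the spectral sequence to $\widehat{\HF}(\#^2 S^1\times S^2)$. Parity and spectral-sequence bookkeeping alone therefore cannot finish the proof; ruling out such configurations requires geometric input.

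The paper's proof supplies exactly that input, via a fibered/non-fibered dichotomy that is absent from your outline. If $L$ is fibered, the top Alexander grading $2+g$ has rank one, Theorem~\ref{HFKrank} (the LOSS-invariant rank bound) forces rank at least $3$ in the adjacent grading, symmetry produces a mirror pair of such blocks ($8$ generators), and the requirement that the Alexander polynomial vanish appropriately forces at least four further generators, giving rank $\geq 12$. If $L$ is not fibered under any orientation, each face of the polytope cut out by the four planes $A_1\pm A_2\pm A_3=\pm g_i$ carries at least two generators; since $10<16$, some generator must lie on an edge, where it persists under the spectral sequence to $\widehat{\HFL}(L-L_3)\otimes V$. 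The classification of two-component links with $\rank(\widehat{\HFK})\leq 5$ (only the Hopf link and the unlink) then pins all pairwise linking numbers to $\{-1,0,1\}$, forcing $g_1=g_2=1$, hence $\chi(L)=1$ under a suitable orientation, and each two-component sublink to be a Hopf link or unlink --- so $L$ is the disjoint union of a Hopf link and an unknot, whose rank is $8$, a contradiction. None of these mechanisms (Theorem~\ref{HFKrank}, the polytope/edge analysis, the sublink classification, the Euler characteristic constraint) appears in your sketch, and your proposed final step of ``exhausting the small number of rank distributions'' has no tool capable of eliminating the all-even configurations described above.
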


By similar methods we can show a detection result for knot Floer homology:

\begin{theorem}\label{HFKdetcetstrefoilmeridian}
Let $L_0$ be $T(2,3)$ with a meridian. Suppose $\widehat{\HFK}(L)\cong\widehat{\HFK}(L_0)$. Then $L$ is isotopic to $L_0$.
\end{theorem}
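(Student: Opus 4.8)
The plan is to follow the template of the rank detection results and of the $T(2,8)$ argument: first pin down the coarse topology of $L$ (number of components, linking number, genera) from $\widehat{\HFK}(L)$, then reconstruct the full multigraded $\widehat{\HFL}(L)$, and only at the end recognise $L$ geometrically. Write $L_0 = K \cup \mu$ with $K = T(2,3)$ and $\mu$ a meridian. First I would argue exactly as in the proof of Lemma~\ref{T22nunknottedcomponents}: one checks that the maximal Maslov grading of $\widehat{\HFK}(L_0)$ is $\tfrac12$ and that its Maslov gradings lie in $\Z + \tfrac{m-1}{2}$, so the spectral sequence from $\widehat{\HFK}(L)$ to $\widehat{\HF}(\#^{m-1}S^1\times S^2)$ forces $m = 2$. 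Since knot Floer homology determines the Conway polynomial, which detects the linking number of a two–component link~\cite{hoste1985firstcoefficientoftheconwaypolynomial}, we obtain $\lk(L_1,L_2)=\pm 1$; after reversing an orientation we may assume it is $1$.

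Next I would identify the two components. The extremal Alexander gradings of $\widehat{\HFK}(L)$ bound the Seifert genera of the $L_i$ via genus detection~\cite{ni2006note}, forcing one component to be an unknot and the other to have genus one, and fiberedness detection~\cite{ghiggini2008knot} applied to the extremal summand shows that the genus–one component is fibered, hence a trefoil or the figure eight. Because $\widehat{\HFK}(L_0)$ is thin (both $T(2,3)$ and the meridian contribute on a single diagonal, just as the Hopf link does), $\widehat{\CFL}(L)$ is $E_2$–collapsed, so Proposition~\ref{E2collapsed} decomposes it into $B_d, V_d^l, H_d^l, X_d^l, Y_d^l$ summands. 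As in the discussion following that proposition, the chain homotopy types of $\widehat{\CFL}(L_1)$ and $\widehat{\CFL}(L_2)$ are read off from the $V$– and $H$–summands, which determines $\widehat{\HFL}(L)$ up to the finitely many placements of the remaining $B_d$ summands permitted by the symmetry of link Floer homology. The total rank, together with the rank in the middle Alexander grading, excludes the figure eight (whose genus–one summand has rank three rather than one), and the Maslov gradings of the extremal generators then single out the right–handed trefoil over its mirror, exactly as chirality is used in Lemma~\ref{(2,0)trefoilcable}.

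To remove the residual ambiguity in the positions of the $B_d$ summands I would invoke Proposition~\ref{Braidpolytope}, precisely as in the proofs of Theorems~\ref{HFKT28} and~\ref{HFKT210}: since $\widehat{\HFL}(L)$ has rank $2$ in at least one extremal $A_i$ grading, that component is a braid axis by~\cite{GageT26}, which caps the extremal grading, and the dual Thurston polytope constraint of Proposition~\ref{Braidpolytope} then forces $\widehat{\HFL}(L)\cong\widehat{\HFL}(L_0)$. It remains to recognise $L$ geometrically, and here the fibredness of $K=T(2,3)$ is the key. By Martin's braid detection~\cite{GageT26}, rank $2$ in the top $A_1$ grading means the unknotted component is braided about the trefoil; as the linking number is $1$ it meets each page of the open book of $K$ exactly once, i.e.\ it is a $1$–braid. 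The main obstacle is the final upgrade from ``unknotted $1$–braid about $T(2,3)$'' to ``meridian of $T(2,3)$''. My plan is to regard such a $1$–braid as a section of the fibration of $S^3\setminus K$ and to argue that, for the trefoil's periodic once–punctured–torus monodromy, an unknotted section of linking number one is isotopic to the binding meridian — concretely, that it bounds a disk meeting $K$ exactly once. I expect this geometric rigidity statement, rather than any of the Floer–theoretic bookkeeping, to be the crux, and it is the step I would spend the most care on, either by exploiting the Seifert fibred structure of the trefoil complement or by reducing to the corresponding detection statement for $L_0$ in link Floer homology.
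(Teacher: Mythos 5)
Your opening moves match the paper's: both arguments get two components from the maximal Maslov grading $\tfrac12$ together with the parity of $\rank(\widehat{\HFK}(L))$, and both get linking number one from the Conway polynomial via~\cite{hoste1985firstcoefficientoftheconwaypolynomial}. The genuine gap is in your mechanism for eliminating the residual ambiguity in $\widehat{\HFL}(L)$: Proposition~\ref{Braidpolytope} is inapplicable here. Its hypotheses require the braid axis to be an \emph{unknotted} component and $|\lk(L_1,L_2)|>1$; in the present situation the linking number is exactly $1$, and the only component that Martin's criterion~\cite{GageT26} could exhibit as a braid axis is the fibered (trefoil) one, so both hypotheses fail. (This is precisely why the paper invokes Proposition~\ref{Braidpolytope} for $T(2,8)$ and $T(2,10)$, where the linking numbers are $4$ and $5$, but never in this proof.) The paper instead pins down $\widehat{\HFL}(L)$ by purely internal bookkeeping: the absence of positive Maslov gradings together with $A_1(0)+A_2(0)+\lk(L_1,L_2)=2$ places the Maslov index $0$ generator at bi-Alexander grading $(\tfrac32,\tfrac12)$; Lemma~\ref{chainlevel} (the chain-level LOSS lemma, not the $E_2$-collapsed machinery) supplies Maslov index $-1$ generators at $(\tfrac12,\tfrac12)$ and $(\tfrac32,-\tfrac12)$; the spectral sequences to $\widehat{\HFL}(L_i)\otimes V$ force a Maslov index $-2$ generator at $(\tfrac12,-\tfrac12)$; and then symmetry alone forces the last stray Maslov index $-1$ generator at $(x,1-x)$ to have $x=\tfrac12$. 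Your thinness observation and Proposition~\ref{E2collapsed} are consistent with this (the homology is indeed supported in a single $\delta$-grading), but they do not by themselves fix the placement of the leftover $B_d$ summand, and the tool you chose for that job does not apply. Your intermediate step identifying the components as an unknot and a genus-one fibered knot is also more than the paper needs and is itself only sketched (fiberedness detection applies to $\widehat{\HFK}(L_i)$, which you only access through spectral sequences, not directly).

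On the final geometric step: the rigidity statement you flag as the crux --- that an unknotted $1$-braid about $T(2,3)$ with linking number one is a meridian --- is neither proved by you nor attempted by the paper. The paper's very first sentence reduces the whole theorem to showing $\widehat{\HFL}(L)\cong\widehat{\HFL}(L_0)$ and then cites the link Floer homology detection result for $L_0$ from~\cite{binns2020knot}, which is exactly the fallback you mention in your last sentence. You should simply take that fallback: as written, your primary plan leaves the hardest step as an expectation, and even granting it, the route to $\widehat{\HFL}(L)\cong\widehat{\HFL}(L_0)$ runs through the broken Braidpolytope step, so the argument does not close without the repairs above.
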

     Here we take the linking number of the right handed trefoil and its meridian to be one. Again this result should be compared to the corresponding link Floer homology detection result in~\cite{binns2020knot}.

The proofs of Theorems \ref{rank4}, \ref{rank6}, \ref{rank8} and \ref{3componentrank12} each essentially follow from case analysis in which we use algebraic and geometric properties of link Floer homology to exclude the unwanted cases, as well as the detection results for Knot Floer homology found in \cite{binns2020knot}. Throughout this section, we take $\F$ to be $\Q$ or $\Z_2$.

We first prove Lemma~\ref{unlinkdetection}, as it is useful in proving the other theorems.

We will find the following characterization of unlinks useful:

\begin{lemma}[Ni~\cite{ni2006note}]\label{alexandergradingcharachterizationofunlinks}
Suppose $\widehat{\HFK}(L)$ is supported in a single Alexander grading. Then $L$ is an unlink.
\end{lemma}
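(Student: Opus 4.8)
The plan is to deduce the statement from the fact, already quoted in Section~\ref{A Brief Review of Link Floer homology}, that $\widehat{\HFK}(L)$ determines the maximal Euler characteristic $\chi_{\max}(L)$ of a surface bounding $L$~\cite{ni2006note}, together with the classical observation that a link bounding disjoint disks is an unlink. First I would pin down the location of the single Alexander grading. The symmetry of knot Floer homology, $\widehat{\HFK}_d(L,A)\cong\widehat{\HFK}_{d-2A}(L,-A)$, shows that the support of $\widehat{\HFK}(L)$ is invariant under $A\mapsto -A$; so if this support is a single Alexander grading $A_0$ then $A_0=-A_0$, forcing $A_0=0$. Hence $\widehat{\HFK}(L)$ is supported entirely in Alexander grading $0$.

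Next I would convert the vanishing of the Alexander width into a statement about Euler characteristic. Writing $n$ for the number of components of $L$ and $y_{\max}=\max\{A:\widehat{\HFK}(L,A)\neq 0\}$, Ni's detection of the Thurston norm gives $y_{\max}=\tfrac{n-\chi_{\max}(L)}{2}$; this is exactly the content of~\cite{ni2006note}, and it reproduces $y_{\max}=g$ for a knot of genus $g$ and $y_{\max}=0$ for the $n$-component unlink. Since by the previous paragraph the support lies in the single grading $0$, we have $y_{\max}=0$, and therefore $\chi_{\max}(L)=n$.

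Finally, the topological step. Let $S$ be an embedded surface with no closed components realizing $\partial S = L$ and $\chi(S)=\chi_{\max}(L)=n$. Decomposing $S=\bigsqcup_j S_j$ into connected pieces, with $S_j$ having $b_j\geq 1$ boundary circles and genus $g_j$, one has $\chi(S_j)=2-2g_j-b_j\leq b_j$, with equality precisely when $g_j=0$ and $b_j=1$, i.e. when $S_j$ is a disk. Since $\sum_j\chi(S_j)=n=\sum_j b_j$, every one of these inequalities must be an equality, so each $S_j$ is a disk. Thus $L$ bounds $n$ disjoint embedded disks, and is therefore the $n$-component unlink, as claimed.

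I do not expect any single step to be a serious obstacle here, precisely because the one piece of hard input — that $\widehat{\HFK}$ detects $\chi_{\max}(L)$ — is the cited theorem of Ni. The step most in need of care is the bookkeeping in the last paragraph: one must work with the convention that $\chi_{\max}$ is computed over surfaces with no closed components (so that sphere components, which would spuriously raise the Euler characteristic, are excluded), and one uses $n$ simply as the given number of components of $L$ rather than reading it off the homology.
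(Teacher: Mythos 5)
Your proposal is correct and matches the paper's own argument in all essentials: both use the symmetry of $\widehat{\HFK}$ to place the support in Alexander grading $0$, invoke Ni's result~\cite{ni2006note} to conclude $\chi_{\max}(L)=n$, and then run the same Euler-characteristic bookkeeping over the connected pieces of a maximal surface (your inequality $\chi(S_j)\leq b_j$ with equality exactly for disks is just a repackaging of the paper's identity $0=n-m+\sum_i g_i$ with $n\geq m$). Your closing caveat about excluding closed components is a correct and worthwhile point of care that the paper leaves implicit.
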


\begin{proof}[Proof of Lemma~\ref{alexandergradingcharachterizationofunlinks}]
Observe that if $\widehat{\HFK}(L)$ is supported in a single Alexander grading then that grading must be zero, by the symmetry of $\widehat{\HFK}(L)$. It follows that $0=n-\sum_{i=1}^m(2-2g-n_i)$, where $n$ is the number of components of $L$, and the surface that $L$ bounds has $m$ components, the $i$th of which has $n_i$ boundary components. Thus $0=n-m+\sum_i g_i$. But $n\geq m$, so $0=n-m+\sum_{i=1}^m g_i\geq \sum_{i =1}^{m}g_i\geq 0$. Thus $\sum_{i=1}^m g_i=0$ and $n=m$. It follows that $L$ bounds $n$ disks, whence $L$ is an unlink with $n$ components.
\end{proof}

Throughout this section we use $V$ to denote the rank two vector space of multi-Alexander grading $0$, with one generator of Maslov index $0$ and the other $-1$.

\begin{proof}[Proof of Lemma~\ref{unlinkdetection}]
    We proceed by induction. For the base, $n=0$, the result follows from the fact that knots of genus at least one have rank at least two, since they have rank at least one in Alexander gradings $g$ and $-g$, where $g$ is the genus of the knot. For the inductive step, note that if $L$ is an $n+1$ component link with $\rank(\widehat{\HFK}(L))=2^n$ then for any component $L_i$ of $L$, $L-L_i$ is a link with $\rank(\widehat{\HFK}(L-L_i))=2^{n-1}$. By inductive hypothesis $L-L_i$ is the $n$ component unlink. It follows that the support of $\widehat{\HFL}(L)$ is contained in the line defined by $A_j=0$ for $j\neq i$. But $i$ is arbitrary, so $\widehat{\HFL}(L)$ is supported in multi-Alexander grading $\mathbf{0}$, and the result follows from Lemma~\ref{alexandergradingcharachterizationofunlinks}.
\end{proof}

We now prove Theorem~\ref{rank4}.

\begin{proof}[Proof of Theorem~\ref{rank4}]
Let $L$ be an $n$ component link with $\rank(\widehat{\HFK}(L))=4$. Since $\widehat{\HFK}(L)$ admits a spectral sequence to $\widehat{\HF}(\#^{n-1}S^1\times S^2)$, it follows that $n\leq 3$. Indeed, since $\rank(\widehat{\HFK}(L))$ is even, it follows that $L$ has either two or three components. We evaluate each case separately.

If $L$ has three components then $L$ is the three component unlink by lemma~\ref{unlinkdetection}. It therefore remains only to consider the case that $n=2$. Let $g$ be the maximal Alexander grading of $\widehat{\HFK}(L)$. We consider the following three cases: $\rank(\widehat{\HFK}(L)(g))$ is either four, two or one, in which case the link is fibered by \cite{ni2007knot}.\begin{enumerate}
       
       \item{\textbf{$\rank(\widehat{\HFK}(L)(g))=4$}.} In this case $g=0$, whence $\widehat{\HFK}(L)$ does not admit a spectral sequence to $\widehat{\HF}(S^1\times S^2)$.
          \item{\textbf{$\rank(\widehat{\HFK}(L)(g))=2$}.} In this case we see that 
         $$\widehat{\HFK}(L)\cong (\F_{m_1}\oplus\F_{m_2})[g]\oplus (\F_{m_1-2g}\oplus\F_{m_2-2g})[-g].$$
        
        Observe that $g>0$. Consider the spectral sequence from $\widehat{\HFK}(L)$ to $\widehat{\HF}(S^1\times S^2)$. There must be exactly one generator with non-trivial differential. Without loss of generality we may then take $m_1-1=m_2-2g$, but in that case we cannot have that the remaining generators from the set $\{\frac{1}{2},\frac{-1}{2}\}$, a contradiction.
        
       \item{\textbf{$\rank(\widehat{\HFK}(L)(g))=1$}.} Observe that $g\geq 1$. Let $[x,g-x]$ be the bi-Alexander grading of the contact class in $\widehat{\HFL}(L)$.
       
       If $g>1$, then Theorem~\ref{rankinnequalityforfiberedlinks} implies that $\rank(\widehat{\HFK}(L))\geq 6$. Thus $g=1$, and $$\widehat{\HFL}(L)\cong \F_{m}(\frac{1}{2},\frac{1}{2})\oplus \F_{m-1}(-\frac{1}{2},\frac{1}{2})\oplus \F_{m-1}(\frac{1}{2},-\frac{1}{2})\oplus \F_{m-2}(-\frac{1}{2},-\frac{1}{2})$$
       Since $\widehat{\HFL}(L)$ admits a spectral sequence to $V$ it follows that $m\in\{0,1\}$. These two cases correspond to the two oriented link types of $T(2,2)$, which link Floer homology is known to detect per~\cite{BG}.
    \end{enumerate}

\end{proof}

We now prove Theorem~\ref{rank6}, proceeding with a method different from the rank four case: we first analyse the case that $L$ is fibered under some orientation, and the remaining cases separately.

\begin{lemma}\label{rank6fibered}
There is no fibered link $L$ such that $\rank(\widehat{\HFK}(L))=6$.
\end{lemma}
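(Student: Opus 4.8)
The plan is to bound the number of components and then dispose of each case, the two‑component case being the real content.

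Since $\widehat{\HFK}(L)$ carries a spectral sequence to $\widehat{\HF}(\#^{n-1}(S^1\times S^2))$, whose total rank is $2^{n-1}$, a total rank of $6$ forces $2^{n-1}\le 6$ and hence $n\le 3$. If $n=1$ then $L$ is a knot and the spectral sequence to $\widehat{\HF}(S^3)=\F$ makes $\rank(\widehat{\HFK}(L))$ odd, so it cannot equal $6$. If $n=3$ I would argue by parity. A fibered link is non‑split, hence non‑trivial with connected fiber, so its maximal Alexander grading is $A=\tfrac{3-\chi(L)}{2}=2+g\ge 2$, where $g$ is the fiber genus. By Proposition~\ref{generalrankbound} and the remark following it, if any Alexander grading had odd rank the total rank would be at least $2^3=8$; so every Alexander grading has even rank, and then Corollary~\ref{rankinnequalityforfiberedlinks} gives rank $\ge 3$, hence $\ge 4$, in the grading $A-1$. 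Adding its conjugate‑symmetric copy in the distinct grading $-(A-1)$ already forces total rank $\ge 8>6$.

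The case $n=2$ is the main obstacle. First I would bound the components: the spectral sequences $\widehat{\HFL}(L)\Rightarrow\widehat{\HFK}(L_i)\otimes V$ have targets of rank $2\,\rank(\widehat{\HFK}(L_i))$, so each component satisfies $\rank(\widehat{\HFK}(L_i))\le 3$, and since knot Floer homology has odd rank each $L_i$ is an unknot or a trefoil (a rank‑three knot being a trefoil). If the fiber genus is $0$ the fiber is an annulus, both components are unknots, and $L$ is a torus link $T(2,2k)$ of total rank $4|k|\ne 6$. So I may assume $g\ge 1$, whence $A=g+1\ge 2$. Because $L$ is fibered its supporting contact structure is tight and the contact class $\widehat{\mathcal L}(L)$ is non‑zero in the maximal Alexander grading $A$; applying Proposition~\ref{pbLOSS} at its multi‑Alexander grading $(A_1^0,A_2^0)$, where the $+1$ directions are excluded by maximality, produces non‑zero groups at $(A_1^0,A_2^0)$, $(A_1^0-1,A_2^0)$, $(A_1^0,A_2^0-1)$ and, by conjugation symmetry, at the three negated gradings. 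For $A\ge 2$ these six points are distinct, so total rank $6$ forces $\widehat{\HFL}(L)$ to be supported on exactly these six gradings, each of rank one.

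Next I would pin the configuration down by comparing its $A_i$‑profiles with the component spectral‑sequence targets $\widehat{\HFK}(L_i)\otimes V$: the parities of the resulting multiplicities force $A_1^0=A_2^0=1$, $\lk(L_1,L_2)=0$ and $g=1$, so the support is the hexagon $(1,1),(0,1),(1,0),(-1,-1),(0,-1),(-1,0)$, in particular containing $(0,1)$ and $(0,-1)$. The contradiction now comes from Maslov gradings. Matching the $A_1=0$ slice against $\widehat{\HFK}(L_i)\otimes V$, whose relevant part is a single copy of $V$, shows that the generators at $(0,1)$ and $(0,-1)$ have Maslov gradings differing by exactly $1$; but the symmetry $\widehat{\HFL}_m(A_1,A_2)\cong\widehat{\HFL}_{m-2A_1-2A_2}(-A_1,-A_2)$ identifies these two generators with a Maslov shift of $2(0+1)=2$, which is impossible. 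This Maslov‑grading comparison on the forced hexagon is the crux of the argument; the reduction to unknot or trefoil components and the genus‑$0$ torus‑link case are comparatively routine.
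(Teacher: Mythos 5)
Your skeleton is sound and genuinely different from the paper's in the main case: the paper, having forced the same six-point ``hexagon'' support (its $g=2$, $x=1$ complex), argues geometrically --- reversing a component's orientation makes $L$ bound an annulus, trefoil components are excluded because trivial spectral sequences force $\lk(L_1,L_2)=0$, and then braid detection~\cite{GageT26} gives a contradiction --- whereas you derive the hexagon via Proposition~\ref{pbLOSS} and kill it purely algebraically using the Maslov shift in the symmetry $\widehat{\HFL}_m(A_1,A_2)\cong\widehat{\HFL}_{m-2A_1-2A_2}(-A_1,-A_2)$. That contradiction is correct and more self-contained than the paper's. However, two steps are asserted rather than proved. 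First, in the annulus-fiber case you write that ``both components are unknots,'' but your own component bound only yields unknot or trefoil, and a fibered link bounding an annulus could a priori be a $(2,2k)$-cable of a trefoil; the paper spends real effort here (the equality $6=\rank(\widehat{\HFK}(T(2,\pm3))\otimes V)$ forces the spectral sequences to the components to be trivial, hence $\lk=0$, and then rank two in a top $A_i$ grading forces braidedness or splitness, both contradictions). Second, your claim that ``parities force $\lk(L_1,L_2)=0$'' is false as stated: with unknotted components, $\lk=\pm2$ is consistent with every $A_i$-slice of the hexagon having even rank, since the target $\widehat{\HFK}(L_1)\otimes V[\lk/2]$ is then concentrated in $A_1=\lk/2\in\{-1,0,1\}$ and the two off-slices can simply die in the spectral sequence; your final step, which assumes the $A_1=0$ slice survives to ``a single copy of $V$,'' does not cover this case.

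The second gap is patchable inside your own method in one sentence: if $\lk=\pm2$, the rank-two slice $A_1=0$ must be killed by a spectral-sequence differential from $(0,1)$ to $(0,-1)$, and every differential in this filtration spectral sequence has Maslov degree $-1$, again contradicting the symmetry shift of $2$ (alternatively, invoke Hoste's theorem or the Torres conditions, as the paper does elsewhere). The first gap needs the paper's actual argument, or the observation that an open book with annulus page and power-of-Dehn-twist monodromy gives $S^3$ only for the Hopf link. Two smaller fixes: a fibered link's supporting contact structure need not be tight (the negative trefoil supports an overtwisted one), so ``tight, hence contact class nonzero'' is the wrong justification --- the nonvanishing of $\widehat{\mathcal{L}}(L)$ in the top Alexander grading for fibered links is exactly the input to Corollary~\ref{rankinnequalityforfiberedlinks} and holds independently of tightness; and your $n=3$ case is internally tangled, since fiberedness gives rank one --- odd --- in the top Alexander grading, so the remark after Proposition~\ref{generalrankbound} gives $\rank\geq 2^3=8>6$ immediately, while your intermediate conclusion that every Alexander grading has even rank already contradicts that rank-one top grading.
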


\begin{proof}[Proof of Lemma~\ref{rank6fibered}]
Suppose $L$ is an $n$ component fibered link with $\rank(\widehat{\HFK}(L))=6$. Since $\rank(\widehat{\HFL}(L))$ is even, $L$ cannot be a knot. Moreover, $L$ can have at most $3$ components, since $\widehat{\HFL}(L)$ admits a spectral sequence to $\widehat{\HF}(\#^{n-1}S^1\times S^2)$.

If $L$ has three components, then since $L$ is fibered, the maximal Euler characteristic Seifert surface for $L$ is connected, whence $\chi(L)\leq -1$, and the maximal Alexander grading in which $\widehat{\HFK}(L)$ has support is at least $2$. By Theorem~\ref{rankinnequalityforfiberedlinks}, the next to top Alexander grading is of rank three, whence $\widehat{\HFK}(L)$ is of rank at least eight, a contradiction.

Thus $L$ has two components. Again, as $L$ is fibered, the maximal Euler characteristic Seifert surface for $L$ is connected, whence $\chi(L)\leq 0$, and the maximal Alexander grading, $g$, in which $\widehat{\HFK}(L)$ has support is at least $1$.

If $g=1$ then $L$ bounds an annulus, and hence $L$ is a $(2,2n)$-cable. Note that each component of $L$ has knot Floer homology of rank at most four and is therefore an unknot or a trefoil. None of the links $T(2,2n)$ has knot Floer homology of rank $6$, while if $L$ is the $2$-cable of a trefoil, we can see that the linking number of $L$ must be zero since the spectral sequence to either component must be trivial, since $\rank(\widehat{\HFK}(L))=\rank(\widehat{\HFK}(T(2,\pm 3)\otimes V)$. However, we can see that in the top $A_1$ or $A_2$ grading the link Floer homology is of rank $2$ whence at least one component is braided about the other -- contradicting the fact that the linking number is zero -- or $L$ is a split link, which is a contradiction since split links are not fibered.

Suppose now that $g>1$. Let $[x,g-x]$ be the bi-Alexander grading of the top Alexander grading generator. Lemma~\ref{generalnonzero} implies that the complex is given by

\[\F_m[x,g-x]\oplus\F_{m-1}[x-1,g-x] \oplus \F_{m-1}[x, g-x-1]\] \[ \oplus \F_{m-2g}[-x,x-g]\oplus\F_{m-2g+1}[-x+1,x-g] \oplus \F_{m-2g+1}[-x, x-g+1]\]
         
         To ensure that each Alexander grading is of even rank we must have that $g=2, x=1$, i.e. the complex is 
         $$\F_m[1,1]\oplus\F_{m-1}[1,0]\oplus\F_{m-1}[0,1]\oplus\F_{m-4}[-1,-1]\oplus\F_{m-3}[-1,0]\oplus\F_{m-3}[0,-1] $$

             By the symmetry of $\widehat{\HFL}(L)$, we can see that the link obtained by reversing the orientation of one of the complements bounds an annulus, or the disjoint union of a disk and a punctured torus. Since split links cannot be fibered, the latter option is impossible.
         
          Suppose $L$ bounds an annulus. Again since there is no torus link $T(2,2n)$ of rank $6$, $L$ must have trefoil components. Since the spectral sequence corresponding to either component is trivial, it follows that the linking number is zero. But from the complex we can see that either $L$ is split -- which is again impossible since we are assuming $L$ is fibered -- or the components are exchangeably braided by braid detection \cite{martin2022khovanov}, a contradiction.

\end{proof}

Having eliminated the case that $L$ is fibered, we can proceed to the remaining cases.

\begin{proof}[Proof of Theorem~\ref{rank6}]
Let $L$ be an $n$ component link with link Floer homology of rank $6$. Lemma~\ref{rank6fibered} implies that $L$ is not fibered under any orientation. The fact that $2^{n-1}\leq\rank(\widehat{\HFK}(L))$ implies that $L$ has at most three components. Since $\rank(\widehat{\HFK}(L))$ is even, it follows that $L$ has two or three components.

Consider the Link Floer homology polytope of $L$. Observe that it is contained in the polytope defined by $\sum_{j=1}^n A_j=\pm g_i$ for each $g_i$, where $g_i$ is maximal Alexander grading of $\widehat{\HFK}(L)$ under some orientation of $L$.

Lemma~\ref{alexandergradingcharachterizationofunlinks}, together with the fact that there is no unlink with Knot Floer homology of rank $6$, implies that we may take $g_j>0$ for all $i$.

Suppose that $\widehat{\HFL}(L)$ has support at a vertex $p=(p_1,p_2,\dots p_n)$ of the link Floer polytope. Since there exists an Alexander grading $A_i$ for which the support of $\widehat{\HFL}(L)$ in the plane $A_i=p_i$ is exactly $p$, it follows that the spectral sequence from $\widehat{\HFL}(L)$ to $\widehat{\HFL}(L_i)\otimes V^{n-1}$ is trivial. Thus the rank of $\widehat{\HFL(L)}$ at $p$ is at least $2^{n-1}$. Since the polytope is non-degenerate, the symmetry properties of $\widehat{\HFL}(L)$ implies that $2^n\leq 6$, whence $n=2$. The linking number is zero, as the range of the Alexander gradings cannot change under the spectral sequence to the relevant component, because of symmetry properties of knot Floer homology of knots.  The symmetry of $\widehat{\HFL}(L)$ implies that the rank at $p$ of the Link Floer homology is four so that $L$ is a split link -- and the only such link with link is the disjoint union of an unknot and a trefoil -- or $L_i$ is a braid axis -- which is impossible since the linking number of $L_i$ and the other component is zero. 

It thus remains only to consider the case that $\widehat{\HFL}(L)$ does not have support at any of the vertices. Since $L$ is not fibered, each face of the polytope must have at least two generators. If $L$
 has two components, then it must have at least four faces, each of which has two generators in its interior, a contradiction. If $L$ has three components, notice that the polytope has a collection of four faces that meet only at vertices. Since the polytope does not have support at the vertices, it follows that $\rank(\widehat{\HFL}(L))\geq 8$, a contradiction. \end{proof}

For the rank eight case, we proceed as in the rank six case, dealing with fibered links first, and the remaining cases subsequently.

\begin{proposition}\label{rank8fibered}
Suppose $L$ is fibered and $\rank(\widehat{\HFK}(L))=8$. Then $L\cong T(2,4)$ or $T(2,-4)$.
\end{proposition}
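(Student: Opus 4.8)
The plan is to first bound the number of components and then dispose of the cases $n=4,3,2$ in turn, using fiberedness together with the LOSS rank bounds of Section~\ref{Rank bounds in link Floer homology} throughout. For the preliminaries: since $\rank(\widehat{\HFK}(L))=8$ is even, $L$ is not a knot (a knot has odd total knot Floer rank, as its graded Euler characteristic is $\Delta_K$ with $\Delta_K(1)=\pm1$). The spectral sequence to $\widehat{\HF}(\#^{n-1}S^1\times S^2)$ gives $2^{n-1}\le 8$, so $n\in\{2,3,4\}$. Because $L$ is fibered its fiber is a connected Seifert surface, so the top Alexander grading is $g_{\mathrm{top}}=\tfrac{n-\chi(L)}{2}=n-1+g$ with $g$ the fiber genus; moreover $\rank(\widehat{\HFK}(L)(g_{\mathrm{top}}))=1$ by fiberedness detection~\cite{ghiggini2008knot,ni2007knot}, and Corollary~\ref{rankinnequalityforfiberedlinks} gives $\rank(\widehat{\HFK}(L)(g_{\mathrm{top}}-1))\ge n$. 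The symmetry $\widehat{\HFL}_m(L)(A)\cong\widehat{\HFL}_{m-2\sum_i A_i}(L)(-A)$ yields the mirror statements in the two lowest gradings.

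For $n=4$ we have $g_{\mathrm{top}}\ge 3$, so the four gradings $g_{\mathrm{top}},g_{\mathrm{top}}-1,-(g_{\mathrm{top}}-1),-g_{\mathrm{top}}$ are distinct and carry rank at least $1,4,4,1$, forcing $\rank\ge 10>8$. For $n=3$ the same count gives $\rank\ge 1+3+3+1=8$, so equality must hold: the top grading has a single generator, at a multi-grading $v$, and the next-to-top grading has rank exactly $3$. Running the chain-level invariant of Lemma~\ref{chainlevel} (through Proposition~\ref{pbLOSS}) in each coordinate direction, and using that $v$ is maximal so $v+e_i$ is impossible, produces nonzero classes exactly at $v-e_1,v-e_2,v-e_3$; with the central symmetry this places all eight generators at $v,v-e_i$ and $-v,-v+e_i$. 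I would then invoke the fact that $\widehat{\HFL}(L)$ has even rank along every line fixing all but one coordinate $A_k$ (the input behind Proposition~\ref{generalrankbound}, coming from the $V$-factor in the spectral sequence to $\widehat{\HFL}(L-L_k)$). Projecting off each coordinate in turn, the isolated image of $v-e_k$ can acquire even multiplicity only through coincidences with the $-v$-cluster, and these force each coordinate of $v$ to equal $\tfrac12$; but then $\sum_i(v)_i=\tfrac32\neq g_{\mathrm{top}}\ge 2$, a contradiction. Hence $n\notin\{3,4\}$.

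It remains to treat $n=2$. The spectral sequences $\widehat{\HFL}(L)\to\widehat{\HFK}(L_i)\otimes V$ give $\rank(\widehat{\HFK}(L_i))\le 4$, so each component has odd rank $1$ or $3$ and is an unknot or a trefoil. If $g=0$ the fiber is an annulus, so $L$ is a Hopf link of rank $4\neq 8$; thus $g\ge 1$ and $g_{\mathrm{top}}\ge 2$. As in the case $n=3$, the LOSS invariant and symmetry force generators at $v,v-e_1,v-e_2$ and their negatives, and the even-rank-on-lines constraint pins down the two remaining generators: either $v=(1,1)$ with the extra pair at $(0,0)$ (the profile of $T(2,4)$), or the extra pair sits at $v-e_1-e_2$ and $-v+e_1+e_2$. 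In the latter configuration the rank in a maximal $A_i$-grading is $2=2^{n-1}$, so by Martin's braid detection~\cite{GageT26} the components are exchangeably braided; combined with the dual Thurston polytope bound of Proposition~\ref{Braidpolytope} and $g(L_i)\le 1$, I would deduce that $L$ bounds an annulus after reversing the orientation of a component, hence is the $(2,2m)$-cable of the annulus core. Since each component is an unknot or trefoil and the total rank is $8$, the core must be the unknot, so $L=T(2,2m)$ with $4|m|=8$, giving $L\cong T(2,\pm4)$.

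The main obstacle is precisely this last case, $n=2$ with $g\ge 1$. Unlike the rank-six analysis of Lemma~\ref{rank6fibered}, where no $T(2,2m)$ of the relevant rank exists and the case closes in contradiction, here $T(2,\pm4)$ genuinely occurs, so the argument cannot terminate in a contradiction but must instead funnel every admissible configuration—including potential higher-genus fibers with $g_{\mathrm{top}}\ge 3$ and trefoil components—down to the annular cable $T(2,\pm4)$. Controlling the two free generators and ruling out a nontrivial core is the delicate step, and is where I expect the bulk of the bookkeeping, carried out via orientation reversal, Proposition~\ref{Braidpolytope}, and braid detection, to be required.
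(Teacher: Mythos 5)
Your reductions for $n=4$ and $n=3$ are correct and take a different route from the paper: the paper disposes of $n=4$ via Lemma~\ref{unlinkdetection} (a four-component link of rank $8$ is an unlink, which is not fibered) and of $n=3$ by writing down the forced complex $\F_m[g]\oplus\F_{m-1}^3[g-1]\oplus\F_{m+1-2g}^3[1-g]\oplus\F_{m-2g}[-g]$ and observing that its graded Euler characteristic does not vanish at $t=1$, as it must for a multi-component link. Your counting argument ($1+4+4+1=10>8$) and your multi-grading parity argument both work -- though in the $n=3$ step the coincidences actually force every coordinate of $v$ to be \emph{at most} $\tfrac12$, giving $\sum_i v_i\le\tfrac32<g_{\mathrm{top}}$, which is what you should say rather than ``each coordinate equals $\tfrac12$.''

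The genuine gap is in $n=2$, which is the heart of the proposition. First, your claimed dichotomy for the two remaining generators is incomplete: with $v=(x,g_{\mathrm{top}}-x)$, the odd lines are $A_1\in\{x-1,\,1-x\}$ and $A_2\in\{g_{\mathrm{top}}-x-1,\,x+1-g_{\mathrm{top}}\}$, and besides the pair $\{v-e_1-e_2,\,-v+e_1+e_2\}$ the \emph{anti-diagonal} symmetric pair $\{(x-1,\,x+1-g_{\mathrm{top}}),\,(1-x,\,g_{\mathrm{top}}-x-1)\}$ also renders every coordinate line even-rank, so parity alone does not pin down the configuration. Excluding it, and controlling the free parameter $x$, is exactly the Maslov-grading bookkeeping you defer: the paper fixes the Maslov gradings of the extra pair to $m-2$ and $m+2-2g$ using symmetry and the spectral sequences to $\widehat{\HFK}(L_i)\otimes V$, shows for $(x,g)\notin\{(\tfrac32,2),(1,2)\}$ that the complex splits as two squares, counts the three horizontal and three vertical arrows those spectral sequences require to force a corner of one square onto the diagonal at $(\tfrac{\lk(L_1,L_2)}{2},\tfrac{\lk(L_1,L_2)}{2})$, and only then concludes via orientation reversal that $L$ bounds an annulus; the exceptional cases $(1,2)$ and $(\tfrac32,2)$ need separate arguments. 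Second, your closing tools do not deliver the assertions made of them: Proposition~\ref{Braidpolytope} applies only when the braid axis is \emph{unknotted} and $|\lk(L_1,L_2)|>1$, neither of which you have established in your ``latter configuration,'' and it bounds the dual Thurston polytope -- it does not by itself produce an annulus. Finally, ``the core must be the unknot'' does not follow from $\rank(\widehat{\HFK}(L))=8$ together with each component being an unknot or trefoil: the $2$-cable of a trefoil has trefoil components, of rank $3\le 4$, so it passes your component test; the paper rules out trefoil cables by showing (from the grading span) that the linking number would have to be zero, contradicting braidedness and non-splitness of a fibered link. Without these steps the two-component case -- the only one in which $T(2,\pm4)$ must actually emerge rather than a contradiction -- does not close.
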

\begin{proof}
Let $L$ be a fibered link with link Floer homology of rank eight. Since $\widehat{\HFK}(L)$ admits a spectral sequence to $\widehat{\HF}(\#^{n-1}S^1\times S^2)$, where $n$ is the number of components of $L$, it follows that $n$ has at most four components. Indeed, $L$ cannot be a knot since $\rank(\widehat{\HFK}(L))$ is even. Suppose $L$ has four components. Then by Theorem~\ref{unlinkdetection}, $L$ must be a four component unlink, which is not fibered.

Suppose $L$ has three components. Since $L$ is fibered, the minimal genus Seifert surface for $L$ is the fiber surface, which is connected. It follows that the highest Alexander grading, $g$, in which $\widehat{\HFK}(L)$ has supported is at least $2$. By Lemma~\ref{generalnonzero}, $\rank(\widehat{\HFK}(L)(g-1))\geq3$. Thus $$\widehat{\HFK}(L)\cong\F_m[g]\oplus\F_{m-1}^3[g-1]\oplus \F_{m+1-2g}^3[1-g]\oplus\F_{m-2g}[-g]$$ where $m\in\Z$. It follows that the Alexander polynomial of this link does not evaluate to zero on $1$, a contradiction.

Hence $L$ is a two component link. Then, as previously, the maximal Alexander grading, $g$, of $\widehat{\HFK}(L)$ is at least one. Suppose $g=1$. Then $\rank(\widehat{\HFK}(L;0))=6$. Such a complex clearly does not admit a spectral sequence to $\widehat{\HF}(S^1\times S^2)$, a contradiction. Thus $g\geq 2$. Consider $\widehat{\HFL}(L)$. Suppose the top Alexander grading generator is in multi-Alexander grading $(x,g-x)$ and of Maslov index $m$. Lemma~\ref{generalnonzero} implies that there are generators in Alexander gradings $(x-1,g-x)$, $(x,g-x-1)$ both of Maslov index $m-1$, while symmetry implies that there are generators in multi-Alexander grading $(-x,x-g)$ of Maslov index $m-2g$ as well as in bi-Alexander gradings $(1-x,x-g)$ and $(-x,g+1-x)$ of Maslov index $m+1-2g$. Since $g\geq 2$ it follows that all of these generators are distinct. To ensure that the complex is even rank in each $A_i = k$ grading, we require that there are generators in multi-Alexander gradings $(x-1,g-x-1)$ and $(1-x,x+1-g)$. Observe that the symmetry properties of link Floer homology, together with the existence of the spectral sequences to complexes arising as tensor products with $V$, imply that the Maslov gradings of these generators are $m-2$ and $m+2-2g$ respectively.

Observe that each component of $L$ has rank at most four whence each component is an unknot or a trefoil.

Without loss of generality, after relabeling we may take $2x\geq g$. Suppose $(x,g)\not\in \{(\frac{3}{2},2),(1,2)\}$. Then the chain complex splits, as the sum of two squares, and each component is an unknot, since there do not exist three consecutive Alexander gradings containing generators of the correct Maslov indices to have a trefoil component. To ensure the spectral sequence from $\widehat{\HFK}(L)$ to $\widehat{\HFK}(L_1)\otimes V$, we must add three horizontal arrows, meaning that one of the squares gets at least two. Correspondingly we must add three vertical arrows, and to insure that $\partial^2=0$ we must then have that the two arrows are added to the square with two horizontal arrows. This means that the upper right corner, or lower left corner of one of the squares persists under both spectral sequences and hence must be on the diagonal, as its coordinates are then given by $(\frac{\lk(L_1,L_2)}{2},\frac{\lk(L_1,L_2)}{2})$. Reversing the orientation of a component, we see that $L$ bounds an Euler characteristic zero surface, that is either an annulus or the disjoint union of a trefoil an unknot. Since fibered links cannot be split, $L$ bounds an annulus, and since each component is unknotted, it follows that $L$ is $T(2,2n)$ for some $n$. The only $n$ for which $\rank(\widehat{\HFK}(T(2,2n))=8$ are $n=\pm2$.

Suppose $(x,g)=(1,2)$. Reversing the orientation of a component, yields a link that bounds an annulus (or the disjoint union of a punctured torus and a disk, but such a link is split and therefore not fibered) whence $L$ is a $2$-cable link. $L$ cannot be the $2$-cable of a trefoil, as we know that the linking number is $0$, since the span of each $A_i$ grading is $3$, but the rank of $\widehat{\HFL}(L)$ in the top Alexander $A_1$ grading is $2$ telling us that $L$ is either split or each component is a braid axis, which would contradict the fact that $L$ is fibered. $L$ cannot be split since split links aren't fibered. If $L$ is a $(2,2n)$-cable of the unknot, then $L$ is a torus link $T(2,2n)$ for some $n$, but the only such torus links with link Floer homology of rank $8$ are $T(2,4)$ and $T(2,-4)$, as required.

Suppose $(x,g)=(\frac{3}{2},2)$. It can be quickly checked that this complex does not admit the requisite spectral sequences, irrespective of whether the remaining component is a trefoil or an unknot. Specifically, it can readily be seen that there cannot be a component of the differential from a generator in $A_1$ grading $\frac{1}{2}$ to one in lower $A_1$ grading, as else the generators that persist under the spectral sequence to $\widehat{\HFL}(L_2)$ do not have admissible Maslov gradings. Thus the complex splits as two squares, and again must be located on the diagonal, a contradiction.

\end{proof}

\begin{proof}[Proof of Theorem~\ref{rank8}]

By the previous lemma it suffices to classify links that are not fibered under any orientation and with link Floer homology of rank eight. Suppose $L$ is such a link, with $n$ components. Since $\rank(\widehat{\HFK}(L))\geq 2^{n-1}$ and $L$ is even, $L$ has either two, three or four components. If $L$ has four components, then Lemma~\ref{unlinkdetection} implies that $L$ is an unlink. We deal with the two remaining cases separately:\begin{enumerate}
   
    \item\textbf{$L$ has two components}
Consider $\widehat{\HFL}(L)$. Since $L$ is not fibered under any orientation, the link Floer polytope of $L$ is a subset of the polytope with boundaries $a_1+a_2=g_1$, $-a_1-a_2=g_1$, $a_1-a_2=g_2$, $a_2-a_1=g_2$. Here $g_1,g_2$ are the maximal Alexander gradings of $\widehat{\HFK}(L)$ under its two possible orientations. This has vertices $\pm(\frac{g_1+g_2}{2},\frac{g_1-g_2}{2}), \pm(\frac{g_1-g_2}{2},\frac{g_1+g_2}{2})$.

Lemma~\ref{alexandergradingcharachterizationofunlinks} together with the fact that the two component unlink has knot Floer homology of rank two imply that we may take $g_1,g_2>0$.

Suppose that $\widehat{\HFL}(L)$ has support at one of the vertices of the polytope. After relabeling the components, and considering the symmetry of the complex, we may take them to be a generators in $(\frac{g_1+g_2}{2},\frac{g_1-g_2}{2})$, $(-\frac{g_1+g_2}{2},\frac{g_2-g_1}{2})$. Observe that both of the generators must persist under the spectral sequence to $\widehat{\HFL}(L_1)$, whence $g(L_1)=g_1+g_2$. Since each component of $L$ is either an unknot or a trefoil it follows that $g_1+g_2\in\{0,1\}$. In either case it follows that at least one of $g_1$ or $g_2$ is zero, a contradiction.

Thus the polytope defined does not have support at its vertices. It follows that the interior of each face contains exactly two generators, see Figure~\ref{rank2face} for an example. Consider the $a_1+a_2=g_1$ face. Assume the two generators on the interior of this face are of distinct multi-Alexander gradings. In order that each $A_1,A_2$ grading is of even rank and the complex admits the requisite symmetries and spectral sequences we must have that $g_1=g_2$. Indeed, considering the Maslov grading $m$ of one of the generators on the $A_1+A_2=g_1$ face we see that $m-2(g_1+g_2)=m-2$, whence again $g_1+g_2=1$, and we obtain a contradiction as previously, see Figure~\ref{rank2face}.

	\begin{figure}[h]
    \centering
    \includegraphics[width=7cm]{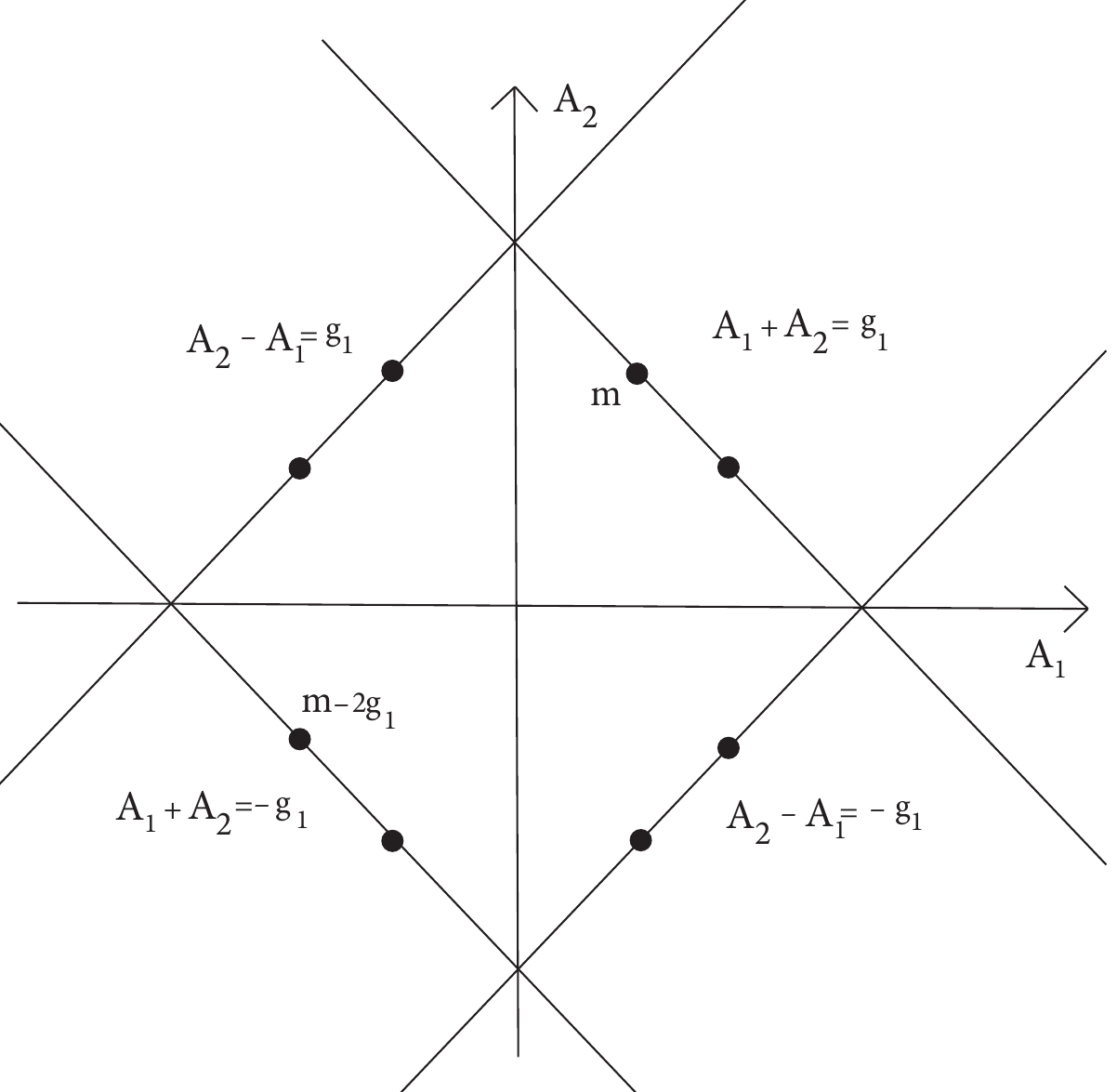}
    \caption{A rank eight bigraded vector space not arising as the link Floer homology of a link.\label{rank2face}}
    \end{figure}

The only remaining case is that on each face the two generators sit in the same multi-Alexander grading, see Figure~\ref{rank2face}. Suppose the generators occur in multi-Alexander gradings $\pm(x,g_1-x)$, $\pm(y,g_2+y)$. In order that the spectral sequences to each component are non-zero, in which case the rank of one the components would be even, we must have that $x=-y$, $g_1-x=g_2+y$. It follows that $g_1=g_2$. Since each $\widehat{\HFL}(L)$ has support in only two $A_i$, $A_j$ gradings, it follows that each component is an unknot. Without loss of generality we may take the linking number to be positive, whence it follows that $x=g_1-x$. It follows that the complex has support in grading $\{\pm(\frac{g_1}{2},\frac{g_1}{2}),(\pm\frac{g_1}{2},\mp\frac{g_1}{2})\}$. Let $m_1\geq m_2$ be the Maslov gradings of the two generators of grading $(\frac{g_1}{2},\frac{g_1}{2})$. Since $A_1$ is an unknot we must have that $\partial_1 x\neq 0$. Without loss of generality, let $\partial_1x=y$. Since $y$ does not persist under the spectral sequence to $\widehat{\HFL}(L_2)$, $\partial_2 y=z$. Thus $z$ is of Maslov grading $m-2$, so that either $g_1=1$, in which case $L$ bounds an annulus and is thereby $T(2,2n)$ for some $n$ a contradiction, since these do not have the right homology, or the remaining generator of bi-Alexander grading is of Maslov grading $\{0,1\}\ni m+2g_1-2\geq m+2$. The remaining generator of bi-Alexander grading $(\frac{g_1}{2},\frac{-g_1}{2})$ must be of Maslov grading $m+2g_1-2\pm 1$. This is because the other remaining generator of bi-Alexander grading $(-\frac{g_1}{2},-\frac{g_1}{2})$ is of Maslov grading either $m+2g_1-2$ or $m+2g_1-2$. But it should also be of grading $m-2g_1$, by symmetry, a contradiction.


\item \textbf{$L$ has three components}
Suppose $L$ has three components. Then each two component sublink is either a Hopf link or a two component unlink. Suppose a two component sublink $L-L_3$ is a Hopf link. Then the linking number of $L_3$ with $L_1$ and $L_2$ must be zero as else the rank of the complex would be greater than eight. It follows that the spectral sequence from $\widehat{\HFL}(L)$ to $\widehat{\HFL}(L-L_3)$ is just projection onto the $A_3=0$ plane. Suppose $\widehat{\HFL}(L)$ is supported in the $A_3=0$ plane, then $L$ consists of a disjoint union of a Hopf link and an unknot. If $\widehat{\HFL}(L)$ is not supported in the $A_3=0$ plane then since the rank in the top $A_1$ grading is four, it follows that $L_1$ is a braid axis -- which would contradict the fact that the linking number is non-zero -- or $L$ is split, in which case $L$ is the disjoint union of the a Hopf link and an unknot.

Suppose now that each two component sublink consists of a two component unlink. It follows that the $\widehat{\HFL}(L)$ has rank four on each plane $\{A_i=0\}_{i=1,2,3}$. $L$ must be of rank eight in multi-Alexander grading $(0,0,0)$, as else the rank is too high. To see this note that if there is a generator in multi-Alexander grading $(0,0,x)$ for $x>0$, then there must also exist generators in multi-Alexander gradings $(0,y_1,x)$, $(y_2,0,x)$ to ensure that the generator in multi-Alexander grading $(0,0,x)$ do not persist under the spectral sequences to $L-L_1$, $L-L_2$. It follows that there are at least $6$ generators that have non-zero $A_3$ component, and at least another $4$ with $A_3=0$, a contradiction.

The result then follows from Lemma~\ref{alexandergradingcharachterizationofunlinks}.
\end{enumerate}

\end{proof}

\begin{proof}[Proof of Proposition~\ref{3componentrank12}]
Let $L$ be a $3$ component link with $\rank(\widehat{\HFK}(L))=10$. Suppose $L$ is fibered. The maximal Alexander grading, $A$, of $\widehat{\HFK}(L)$ is given by $2+g$ for some $g\geq 0$. Note that $\rank(\widehat{\HFK}(L)(A))=1$. Suppose the generator is of Maslov index $m$. Theorem~\ref{rankinnequalityforfiberedlinks} implies that there are at least three generators of Maslov index $m-1$ in Alexander grading $A-1$. The symmetry of $\widehat{\HFK}(L)$ yields a summand $\F_{m-4-2g}[-2-g]\oplus\F_{m-3-2g}^3[-1-g]$. In order for the Alexander polynomial evaluate to $0$ on $-1$, we must add at least another four generators, yielding a complex of rank at least $12$, a contradiction.

We may thus assume that $L$ is not fibered under any orientation. Let $g_i$ be the maximal Alexander grading of $\widehat{\HFK}(L)$ under each of the four possible orientations of $L$, up to an overall reversal in the orientation of $L$. Note that $g_i>0$ by Lemma~\ref{alexandergradingcharachterizationofunlinks}. Consider the polytope $P$ cut out by $A_1+A_2+A_3=\pm g_1$, $A_1+A_2-A_3=\pm g_2$, $A_1-A_2+A_3=\pm g_3$, $A_1-A_2-A_3=\pm g_4$. Observe that each of these faces must contain at least two generators. If the edges of $P$ do not contain any generators, then $\rank(\widehat{\HFK}(L))\geq 16$, a contradiction.

We may therefore assume a generator lies on an edge. Without loss of generality we may take this to be the edge formed by $A_1+A_2+A_3=g_1$, $A_1+A_2-A_3=g_2$, namely $A_1+A_2=\dfrac{g_1+g_2}{2}$. Observe that any generator on this edge, as well as the respective generators yielded by the symmetry of $\widehat{\HFL}(L)$ on the edge $-A_1-A_2=\dfrac{g_1+g_2}{2}$, persists under the spectral sequence to $\widehat{\HFL}(L-L_3)\otimes V[\frac{\lk(L_1,L_3)}{2},\frac{\lk(L_2,L_3)}{2}]$, and is indeed of maximal (respectively minimal $A_1+A_2$ grading). Now, there are only two 2 component links with $\rank(\widehat{\HFK}(L))\leq 5$; the Hopf link and the unlink. It follows that $\lk(L_i,L_j)\in\{-1,0,1\}$ for each $i,j$. Thus the maximal $A_1+A_2$ grading can only be $1$. That is, we have that $1=\dfrac{g_1+g_2}{2}$, so that $g_1=g_2=1$. It follows that, under an appropriate orientation of $L$, $\chi(L)=1$. It follows that $L$ bounds either an annulus and a disk, or two disks and a punctured torus. Note that for each $i$, $\rank(\widehat{\HFK}(L-L_i)\leq 5$, so that $L-L_i$ is an unlink, or a Hopf link. Thus $L$ is the disjoint union of a Hopf link and an unknot, but this link has knot Floer homology of rank eight, a contradiction.

\end{proof}

We conclude with a proof of  Theorem~\ref{HFKdetcetstrefoilmeridian}

\begin{proof}[Proof of Theorem~\ref{HFKdetcetstrefoilmeridian}]

Let $L_0$ be the trefoil together with a meridian. It suffices to show that if  $\widehat{\HFK}(L)\cong\widehat{\HFK}(L_0)$ then  $\widehat{\HFL}(L)\cong\widehat{\HFL}(L_0)$ by the corresponding link Floer homology detection result in~\cite{binns2020knot}.

Since the maximal Maslov grading of $\widehat{\HFK}(L)$ is $\frac{1}{2}$, it follows that $L$ has at most two components. Since $\rank(\widehat{\HFK}(L))$ is even, it follows that $L$ cannot be a knot, whence $L$ is a two component link.

Observe that $\widehat{\HFK}(L)$ detects the linking number of two component links, so the linking number is one. Consider $\widehat{\HFL}(L)$. Let $A_i(0)$ be the Alexander grading of the Maslov index $0$ generator in $\widehat{\HFL}(L_i)$. Since there are no positive Maslov index generators in $\widehat{\HFL}(L)$, it follows that $A_i(0)\geq 0$. Since $0\leq A_1(0)+A_2(0)+\lk(L_1,L_2)=2$, we have that $\{A_1(0),A_2(0)\}=\{0,1\}$. Without loss of generality we may take $A_1(0)=1$. It follows that there is a Maslov index $0$ generator in bi-Alexander grading $(\frac{3}{2},\frac{1}{2})$. By Lemma~\ref{generalnonzero}, we have that there are Maslov index $-1$ generators of bi-Alexander grading  $(\frac{1}{2},\frac{1}{2}),(\frac{3}{2},\frac{-1}{-2})$. Observe that in order that $\widehat{\HFL}(L)$ admit a spectral sequence to $\widehat{\HFL}(L_1)\otimes V$, there must be a Maslov index $-2$ generator with $A_1$ grading $\frac{1}{2}$. Such a generator must be in bi-Alexander grading $(\frac{1}{2},\frac{-1}{2})$. By symmetry this determines the Alexander gradings of all but four generators. Consider the remaining Maslov index $-1$ suppose it is of bi-Alexander grading $(x,1-x)$. Observe that in order to admit spectral sequences to $\widehat{\HFL}(L_i)\otimes V$, there must be Maslov index $-2$ generators in bi-Alexander  gradings $(x-1,1-x)$ and $(x,-x)$. In order that $\widehat{\HFL}(L)$ be symmetric, we must then have that $x=\frac{1}{2}$. We have thus determined that $\widehat{\HFL}(L)\cong\widehat{\HFL}(L_0)$, as desired.
\end{proof}

\end{section}

\begin{section}{Detection Results for Khovanov Homology}
\label{Detection Results for Khovanov Homology}

In this section we briefly review Khovanov homology before proving various detection results.

\begin{subsection}{A Brief Review of Khovanov homology}

Let $L$ be a link, and $R$ be the ring $\Z,\Z_2$ or $\Q$. The \emph{Khovanov chain complex of $L$}, $(\CKh(L,R),\partial)$, is a finitely generated $\Z\oplus\Z$-filtered chain complex over $R$~\cite{khovanov2000categorification}.

$$\CKh(L;R):=\underset{i,j\in\Z}{\bigoplus}\CKh^{i,j}(L)$$

Here $i$ is called the \textit{homological grading}, while $j$ is called the \textit{quantum grading}. The filtered chain homotopy type of $L$ is an invariant of $L$. The $R$-module $\CKh(L,R)$ has generators corresponding to decorated complete resolutions of $D$, while $\partial$ is determined by a simple TQFT. The parity of the $j$ gradings in which $Kh(L)$ has non-trivial support agree with the parity of the number of components of $L$. The \textit{Khovanov homology of $L$} is obtained by taking the homology of $\CKh(L;R)$. A choice of basepoint $p\in L$ induces an action on $\CKh(L;R)$, which commutes with the differential. Taking the quotient of $\CKh(L;R)$ by this action and then taking homology yields a bigraded $R$-module called the~\textit{reduced Khovanov homology} of $L$, denoted $\widetilde{\Kh}(L,p;R)$~\cite{khovanov2003patterns}. Given a collection of points $\mathbf{p}=\{p_1,p_2,\dots,p_k\}\subset L$, there is a generalization of reduced Khovanov homology called \textit{pointed Khovanov homology}, $\Kh(L,\mathbf{p};R)$ due to Baldwin-Levine-Sarkar~\cite{baldwin2017khovanov}.

$\Kh(L;R)$ admits a number of useful spectral sequences. Suppose $L$ has two components $L_1,L_2$. If $R$ is $\Q$ or $\Z_2$ then $\Kh(L;R)$ admits a spectral sequence to $\Kh(L_1;R)\otimes\Kh(L_2;R)$, called the \emph{Batson-Seed spectral sequence}~\cite{batson2015link}. Indeed this spectral sequences respects the $i-j$ grading on $\Kh(L;R)$ in the sense that:
\begin{align}\label{GradedBatsonSeed}
    \rank^{i-j=l}(\Kh(L;R)\geq \rank^{i-j=l+2\lk(L_1,L_2)}(\Kh(L_1;R)\otimes\Kh(L_2;R))
\end{align}

 There is another spectral sequence, called the \emph{Lee spectral sequence}, from $\Kh(L;\Q)$ which abuts to an invariant called the \emph{Lee Homology} of $L$, $\LLL(L):=\bigoplus_i\LLL^i(L)$~\cite{lee2005endomorphism}. This spectral sequence respects the $i$ gradings in the sense that $\rank(\Kh^i(L;\Q))\geq \rank(\LLL^i(L))$. Lee showed that $\LLL(L)\cong\bigoplus_{i=1}^n\Q^2_{a_i}$ where $a_i$ are integers, where $L$ has $n$ components. Indeed if $L$ has two components $L_1,L_2$ then $\LLL(L)\cong\Q^2_{0}\oplus\Q^2_{\lk(L_1,L_2)}$~\cite[Proposition 4.3]{lee2005endomorphism}. Moreover, there is an invariant, $s(L)$ -- due to Rasmussen~\cite{Rasmussen} in the knot case and Beliakova-Wehrli in the link case~\cite{beliakova2008categorification} -- defined from the Lee spectral sequence -- with the property that $s(L)\leq 1 -\chi(L)$.

Finally, there is a spectral sequence, due to Dowlin~\cite{dowlin2018spectral}, relating Khovanov homology and Knot Floer homology. If $L$ is a link and $\mathbf{p}\subseteq L$, with exactly one element of $\mathbf{p}$ in each component of $L$, then there is a spectral sequence from $\Kh(L,\mathbf{p};\Q)$ to $\widehat{\HFK}(L;\Q)$ that respects the relative $\delta$-gradings. Here $\widehat{\HFK}(L;\Q)$ uses the coherent system of orientations given in~\cite{alishahi2015refinement}. We use this version of knot Floer homology, and the corresponding link Floer homology for the remainder of the paper. As a corollary, Dowlin shows that if $L$ has $n$ components then: \begin{align}
    2^{n-1}\rank(\widetilde{\Kh}(L;\Q))\geq \rank(\widehat{\HFK}(L;\Q))\label{DowlinReduced}\end{align}

\end{subsection}

\begin{subsection}{Rank Detection Results for Khovanov homology}

Applying Dowlin's spectral sequence and the rank detection results for knot Floer homology from Section~\ref{Rank Detection Results for Knot Floer homology}, we have the following result:
\begin{corollary}\label{KHr2components}
Suppose $L$ is a two component pointed link with $\rank(\widetilde{\Kh}(L,p;\Z_2))\leq 4$. Then $L$ is one of:\begin{itemize}
    \item an unlink;
\item a Hopf link;
\item $T(2,4)$ or $T(2,-4)$.
\end{itemize}
\end{corollary}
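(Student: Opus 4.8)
The plan is to push the rank hypothesis through Dowlin's spectral sequence to bound $\rank(\widehat{\HFK}(L;\Q))$, invoke the knot Floer classification theorems of this paper to produce a short list of candidates, and then discard the one surviving candidate that the knot Floer bound is blind to by means of a direct computation in Khovanov homology over $\Z/2$.

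First I would pass from $\Z/2$ to $\Q$ coefficients for the reduced theory. For any basepoint the universal coefficient theorem gives $\rank(\widetilde{\Kh}(L,p;\Q))\leq\rank(\widetilde{\Kh}(L,p;\Z/2))$, and over $\Z/2$ the rank of reduced Khovanov homology is independent of the basepoint, so the hypothesis yields $\rank(\widetilde{\Kh}(L,p;\Q))\leq 4$. Applying inequality~\eqref{DowlinReduced} with $n=2$ then gives $\rank(\widehat{\HFK}(L;\Q))\leq 2\cdot 4=8$. Since $L$ has two components, $\rank(\widehat{\HFK}(L;\Q))$ is even and positive, hence lies in $\{2,4,6,8\}$.

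Next I would run the case analysis supplied by the classification results. Rank $2$ forces $L$ to be the two-component unlink by Lemma~\ref{unlinkdetection} (with $n=1$). Rank $4$ gives, by Theorem~\ref{rank4}, the Hopf link or the three-component unlink, of which only the former has two components. Rank $6$ gives, by Theorem~\ref{rank6}, the disjoint union of a trefoil and an unknot. Rank $8$ gives, by Theorem~\ref{rank8}, one of $T(2,4)$, $T(2,-4)$, the four-component unlink, or the disjoint union of a Hopf link and an unknot; among these only $T(2,4)$ and $T(2,-4)$ have two components. Thus $L$ is the unlink, the Hopf link, $T(2,\pm 4)$, or the disjoint union of a trefoil and an unknot.

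It remains to exclude the last candidate, and this is the crux, since the knot Floer bound cannot see it: the disjoint union of a trefoil and an unknot has $\rank(\widehat{\HFK}(\cdot;\Q))=6\leq 8$. Here I would leave the knot Floer framework and compute over $\Z/2$, which is exactly what the $\Z/2$ hypothesis is designed to exploit. For a split link the resolution cube factors, giving a K\"unneth isomorphism $\Kh(L_1\sqcup L_2;\Z/2)\cong\Kh(L_1;\Z/2)\otimes\Kh(L_2;\Z/2)$; combined with Shumakovitch's isomorphism $\Kh(L;\Z/2)\cong\widetilde{\Kh}(L;\Z/2)\otimes W$ with $\dim_{\Z/2}W=2$, this gives $\rank(\widetilde{\Kh}(L_1\sqcup L_2;\Z/2))=2\,\rank(\widetilde{\Kh}(L_1;\Z/2))\,\rank(\widetilde{\Kh}(L_2;\Z/2))$. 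As the trefoil is alternating with determinant $3$ its reduced $\Z/2$-Khovanov homology has rank $3$ and the unknot contributes rank $1$, so the disjoint union has reduced $\Z/2$-Khovanov homology of rank $2\cdot 3\cdot 1=6>4$, contradicting the hypothesis. This leaves precisely the claimed list. The main obstacle is this final step: one must recognize that the knot Floer rank alone is insufficient and that the $\Z/2$-torsion in the trefoil (equivalently the basepoint-independence of reduced Khovanov homology over $\Z/2$) is exactly the phenomenon that separates the trefoil-unknot split from the genuine two-component examples.
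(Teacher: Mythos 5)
Your proof is correct, and its first half is exactly the paper's argument: pass to $\Q$ via the universal coefficient theorem, apply Dowlin's inequality (Equation~\ref{DowlinReduced}) with $n=2$ to get $\rank(\widehat{\HFK}(L;\Q))\leq 8$, and then invoke the rank classification results (Lemma~\ref{unlinkdetection} and Theorems~\ref{rank4}, \ref{rank6}, \ref{rank8}). But your proof is genuinely more complete than the paper's, whose entire written proof is those two steps. The paper silently passes over the fact that Theorem~\ref{rank6} places the disjoint union of a trefoil and an unknot on the candidate list --- a two-component link with $\rank(\widehat{\HFK}(\,\cdot\,;\Q))=6\leq 8$ that does not appear in the corollary's conclusion --- so as written the paper's proof has a gap exactly where you locate the crux. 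Your exclusion of this candidate is correct: the K\"unneth factorization for split links together with Shumakovitch's $\rank(\Kh(L;\Z/2))=2\rank(\widetilde{\Kh}(L;\Z/2))$ (which the paper itself cites right after the statement) gives $\rank(\widetilde{\Kh}(T(2,3)\sqcup U;\Z/2))=2\cdot 3\cdot 1=6>4$, contradicting the hypothesis. Your analysis moreover explains why the $\Z/2$ formulation in the body of the paper is the right one: over $\Q$ with the basepoint placed on the unknotted component, the same split link has $\widetilde{\Kh}(L,p;\Q)\cong\widetilde{\Kh}(U,p;\Q)\otimes\Kh(T(2,3);\Q)$, of rank exactly $4$, so the $\Q$-coefficient version stated in the paper's introduction is sensitive to the choice of basepoint in a way the $\Z/2$ version is not --- the $\Z/2$-torsion of the trefoil is precisely what separates the split candidate from the genuine examples. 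Your preliminary step (basepoint-independence of the $\Z/2$ rank via Shumakovitch, then UCT to $\Q$ at every basepoint) is also handled more carefully than in the paper, and is what licenses applying Dowlin's spectral sequence with its preferred basepoint configuration.
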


Since $\rank(\Kh(L;\Z_2))=2\rank(\widetilde{\Kh}(L;\Z_2))$ by \cite{shumakovitch2014torsion} and $ \rank(\widetilde{\Kh}(L;\Z_2))\geq\rank(\widetilde{\Kh}(L;\Q))$ by the universal coefficient theorem, this recovers Zhang-Xie's classification of two component links with $\rank(\Kh(L;\Z_2))\leq 8$~\cite[Corollary 1.2]{xie2022links}.

\begin{corollary}\label{KHr3components}
Suppose $L$ is a three component pointed link. Then $\rank(\widetilde{\Kh}(L,p;\Q))> 2$.
\end{corollary}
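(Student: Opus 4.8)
The plan is to argue by contradiction, combining Dowlin's inequality with the knot Floer rank classifications of the previous section. Suppose $L$ is a three component link with $\rank(\widetilde{\Kh}(L,p;\Q))\leq 2$; since over $\Q$ the rank of reduced Khovanov homology is independent of the choice of basepoint, this is the same as assuming $\rank(\widetilde{\Kh}(L;\Q))\leq 2$. Feeding this into the Dowlin bound~(\ref{DowlinReduced}) with $n=3$ gives $\rank(\widehat{\HFK}(L;\Q))\leq 2^{2}\cdot 2=8$. The whole task is then to show that no three component link can simultaneously have knot Floer homology of rank at most $8$ and reduced Khovanov homology of rank at most $2$.

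Next I would pin down exactly which links survive the knot Floer bound. Because $L$ has three components, the spectral sequence from $\widehat{\HFK}(L)$ to $\widehat{\HF}(\#^{2}S^1\times S^2)$ forces $\rank(\widehat{\HFK}(L))\geq 2^{2}=4$, and since the rank differs from that of the target by an even number, $\rank(\widehat{\HFK}(L))$ is even; hence $\rank(\widehat{\HFK}(L))\in\{4,6,8\}$. Theorem~\ref{rank4} lists the rank $4$ links as the Hopf link and the three component unlink, Theorem~\ref{rank6} lists the rank $6$ links as the disjoint union of a trefoil and an unknot, and Theorem~\ref{rank8} lists the rank $8$ links as $T(2,4)$, $T(2,-4)$, the four component unlink, and the disjoint union of a Hopf link and an unknot. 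Restricting to the three component entries of these lists leaves exactly two candidates: the three component unlink and the disjoint union of a Hopf link and an unknot.

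It then remains to contradict the standing hypothesis by computing the reduced Khovanov homology of these two candidates. The three component unlink has $\rank(\widetilde{\Kh}(L;\Q))=2^{3-1}=4$, while for the disjoint union of a Hopf link and an unknot the disjoint-union formula $\widetilde{\Kh}(L_1\sqcup L_2,p)\cong\widetilde{\Kh}(L_1,p)\otimes\Kh(L_2)$, with $p\in L_1$, again yields rank $4$ no matter which component carries the basepoint (using that the Hopf link and the unknot have reduced ranks $2$ and $1$ and unreduced ranks $4$ and $2$). In both cases $\rank(\widetilde{\Kh}(L,p;\Q))=4>2$, contradicting the assumption and establishing the corollary. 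The only genuinely computational input is this last step, and it is routine since both candidates are assembled from unknots and a single Hopf link; the real content of the proof lies in the knot Floer enumeration of the preceding paragraph, so the main obstacle is ensuring that the parity and lower-bound constraints on $\rank(\widehat{\HFK}(L))$ are tight enough to leave only those two links. I would also record the alternative flagged in the introduction, namely that the Ozsv\'ath--Szab\'o spectral sequence from the reduced Khovanov homology of $L$ to $\widehat{\HF}$ of its double branched cover bounds $\rank(\widetilde{\Kh}(L;\Z/2))$ below, which gives the statement over $\Z/2$ and hence, by the universal coefficient theorem, over $\Q$.
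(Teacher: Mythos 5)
Your argument is correct and is essentially the paper's own proof: apply Dowlin's inequality~(\ref{DowlinReduced}) to get $\rank(\widehat{\HFK}(L;\Q))\leq 8$, invoke Theorems~\ref{rank4}, \ref{rank6} and \ref{rank8} to reduce to the three component unlink and the disjoint union of a Hopf link and an unknot, and observe that each has reduced Khovanov homology of rank $4>2$, a contradiction. One caveat concerns only your closing aside: a lower bound on $\rank(\widetilde{\Kh}(L;\Z/2))$ does not descend to $\Q$ via the universal coefficient theorem, since the inequality $\rank(\widetilde{\Kh}(L;\Z/2))\geq\rank(\widetilde{\Kh}(L;\Q))$ goes the wrong way for this purpose, but as this remark is not used in your main argument the proof stands.
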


\begin{proof}[Proof of Corollary~\ref{KHr2components}]
Suppose $L$ is a two component link with $\rank(\widetilde{\Kh}(L;\Q))\leq 4$. Equation~\ref{DowlinReduced} implies that $\rank(\widehat{\HFK}(L;\Q))\leq 8$, and the result follows from Theorems~\ref{rank4}, \ref{rank6} and \ref{rank8}.
\end{proof}

\begin{proof}[Proof of Corollary~\ref{KHr3components}]
Suppose $L$ is a three component link with $\rank(\widetilde{\Kh}(L;\Q))\leq 2$. Equation~\ref{DowlinReduced} implies that $\rank(\widehat{\HFK}(L;\Q))\leq 8$. Theorems~\ref{rank4}, \ref{rank6} and \ref{rank8} imply that $L$ is either the disjoint union of a Hopf link and an unknot, or an unlink. But these links have reduced Khovanov homology of too high a rank, a contradiction.
\end{proof}

\end{subsection}

\begin{subsection}{Khovanov Homology Detects $T(2,8)$, $T(2,10)$}
We prove the following theorems:

	\begin{theorem}\label{KHTT28}
		Suppose $\Kh(L;\Z)\cong\Kh(T(2,8);\Z)$. Then $L$ is isotopic to $T(2,8)$
	\end{theorem}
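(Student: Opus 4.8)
The plan is to upgrade the hypothesis to an isomorphism of bigraded groups $\widehat{\HFK}(L;\Q)\cong\widehat{\HFK}(T(2,8);\Q)$ and then invoke Theorem~\ref{HFKT28}. Since $T(2,8)$ is alternating its Khovanov homology is thin, i.e. $\Kh(T(2,8);\Q)$ is supported on two adjacent diagonals $j-2i=\mathrm{const}$; as $\Kh(L;\Z)\cong\Kh(T(2,8);\Z)$ gives $\Kh(L;\Q)\cong\Kh(T(2,8);\Q)$ by the universal coefficient theorem, $L$ is likewise $\Kh$-thin, supported on the same two diagonals.

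First I would pin down the number of components of $L$. The parity of the quantum gradings supporting $\Kh(L;\Q)$ equals that of $T(2,8)$, forcing $|L|$ to be even, and the Lee spectral sequence gives $\rank(\LLL(L))=2^{|L|}\le\rank(\Kh(L;\Q))=16$, so $|L|\in\{2,4\}$. To exclude $|L|=4$ I would use that the Lee generators occur in pairs $\mathfrak{s}_o,\mathfrak{s}_{\bar o}$ lying in a common homological grading, so that $\rank(\LLL^i(L))$ is even for every $i$. If $|L|=4$ then $\rank(\LLL(L))=16=\rank(\Kh(L;\Q))$, which together with $\rank(\LLL^i(L))\le\rank(\Kh^i(T(2,8);\Q))$ forces equality in every homological grading; but a direct computation shows $\Kh(T(2,8);\Q)$ has odd rank in some homological grading (the staircase of knight--move pairs contributes rank--one groups), a contradiction. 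Hence $|L|=2$.

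Next I would transfer to knot Floer homology using Dowlin's spectral sequence. Because $L$ is thin, the $\delta=j-2i$ grading constrains this spectral sequence so tightly that the outcome is forced: the differentials can only effect the knight--move cancellations dictated by the two--diagonal structure, exactly as for $T(2,8)$. Consequently $\widehat{\HFK}(L;\Q)$ is $\delta$--thin and its Alexander--grading ranks are determined by the quantum--grading ranks of $\Kh(L;\Q)$, which coincide with those of $T(2,8)$. This yields $\widehat{\HFK}(L;\Q)\cong\widehat{\HFK}(T(2,8);\Q)$ as bigraded vector spaces, whereupon Theorem~\ref{HFKT28} gives $L\cong T(2,8)$. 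The classical data used inside Theorem~\ref{HFKT28} -- two unknotted components, linking number four, and fiberedness -- are then automatic, the linking number in particular being recovered from $\widehat{\HFK}(L)$, so no separate determination of it from $\Kh(L)$ is needed.

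The crux is this transfer step: making Dowlin's spectral sequence produce an honest bigraded isomorphism rather than a mere equality of total ranks. The point to nail down is that thinness pins the spectral sequence's differentials, so that the bigraded $\widehat{\HFK}$ is a function of the bigraded $\Kh$ alone; this is where the explicit model computations of $\Kh(T(2,8))$ and $\widehat{\HFK}(T(2,8))$ do the work. The component count, although elementary, similarly depends on the explicit homological--grading ranks of $\Kh(T(2,8);\Q)$ having an odd entry, which I would record from a direct computation.
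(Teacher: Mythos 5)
Your outline (Lee spectral sequence for the component count, Dowlin to pass to knot Floer homology, then a detection theorem) matches the paper's in broad strokes, but the pivotal ``transfer step'' has a genuine gap, and you have flagged the right spot without filling it. Dowlin's spectral sequence respects only the \emph{relative} $\delta$-grading: it sees neither the Alexander grading nor the Maslov grading individually, and thinness of the $E_1$ page neither forces the differentials into prescribed knight-move cancellations nor makes the Alexander-grading distribution of $\widehat{\HFK}(L;\Q)$ a function of the quantum gradings of $\Kh(L;\Q)$. What thinness actually yields is: (i) $\widehat{\HFK}(L;\Q)$ is $\delta$-thin, via Baldwin--Levine--Sarkar's result that the pointed Khovanov homology of a thin link is $\delta$-thin; and (ii) the rank inequality $\rank(\widehat{\HFK}(L;\Q))\le 2\rank(\widetilde{\Kh}(L;\Q))=16$, which is a priori strict and leaves the Alexander support of the survivors completely undetermined. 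This is precisely why the paper does \emph{not} invoke Theorem~\ref{HFKT28}. Instead it proves the strictly stronger Proposition~\ref{combinedHFKKHT28detection}, whose hypotheses are exactly the weak data one can extract (two unknotted components, linking number $4$, $\chi(L)\le -6$, $\rank(\widehat{\HFK}(L;\Q))\le 16$, $\delta$-thin), and whose proof classifies all compatible $E_2$-collapsed complexes via Proposition~\ref{E2collapsed} and eliminates the stray configurations one at a time using the symmetry of $\widehat{\HFL}$, Martin's braid-axis detection, the dual Thurston polytope bound of Lemma~\ref{Braidpolytope}, and Conway-polynomial computations. For instance, the configuration with summands $B_{-6}[-3,2]\oplus B_{-6}[2,-3]$ passes every formal constraint you invoke and is killed only by computing that it would force linking number $25$. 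Your proposal assumes away exactly this case analysis. Relatedly, unknottedness of the components is not ``automatic'' downstream of the transfer: in the paper it is an \emph{input}, established beforehand in Lemma~\ref{KHt228components} via the graded Batson--Seed inequality together with Kronheimer--Mrowka's unknot detection and Baldwin--Sivek's trefoil detection.

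There is also a numerical slip in your component count: $\rank(\Kh(T(2,8);\Q))=10$, not $16$; the value $16$ is the $\Z/2$-rank, obtained by universal coefficients from the three $\Z/2$ summands in Table 1. The conclusion survives, and in fact more easily, since $|L|=4$ would require $\rank(\LLL(L))=2^4=16>10$; but your ``forced equality in every homological grading'' argument as written is computing over the wrong field. Finally, note that the paper extracts the linking number directly from Lee's theorem (the homological-grading difference of the surviving Lee generators, Lemma~\ref{KHcomponentsgenuslinking}); deferring it to $\widehat{\HFK}(L)$, as you do, would be legitimate only if the bigraded transfer step held, which it does not.
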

	
\begin{theorem}\label{KHT210}
	Suppose $\Kh(L;\Z)\cong\Kh(T(2,10);\Z)$. Then $L$ is isotopic to $ T(2,10)$.
\end{theorem}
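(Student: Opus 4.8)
The plan is to deduce from $\Kh(L;\Z)\cong\Kh(T(2,10);\Z)$ that $\widehat{\HFK}(L;\Q)\cong\widehat{\HFK}(T(2,10);\Q)$ and then invoke the knot Floer detection result Theorem~\ref{HFKT210}; the argument should run parallel to that of Theorem~\ref{KHTT28}. I would first determine the coarse topology of $L$. The parity of the quantum gradings supporting $\Kh(L)$ equals the parity of the number of components, so $|L|$ is even and $L$ is not a knot. The Lee spectral sequence gives $2^{|L|}\leq\rank_{\Q}\Kh(L;\Q)=\rank_{\Q}\Kh(T(2,10);\Q)$, which forces $|L|\leq 4$; comparing the homological gradings in which the $2^{|L|}$ Lee generators must survive against the homological support of $\Kh(T(2,10))$ eliminates the case $|L|=4$, leaving $|L|=2$. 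Using $\LLL(L)\cong\Q^2_0\oplus\Q^2_{\lk(L_1,L_2)}$, the homological grading of the nonzero Lee summand is read off from $\Kh(L)$, giving $\lk(L_1,L_2)=5$, in agreement with $T(2,10)$; the same linking number is forced by the graded Euler characteristic of $\Kh(L)$, i.e.\ the Jones polynomial.

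Next I would show that $L$ is thin. Since $T(2,10)$ is alternating, $\Kh(T(2,10);\Z)$ is supported on two adjacent diagonals of the grading $\delta=j-2i$, and its reduced homology $\widetilde{\Kh}(T(2,10);\Q)$ sits on a single such diagonal; the hypothesised isomorphism therefore shows $\widetilde{\Kh}(L;\Q)$ is $\delta$-thin as well. I would then apply Dowlin's spectral sequence from $\widetilde{\Kh}(L,\mathbf{p};\Q)$, taken with one basepoint on each of the two components, to $\widehat{\HFK}(L;\Q)$. Because this spectral sequence respects the relative $\delta$-grading and its initial page sits on a single $\delta$-diagonal, the $E_\infty$-page, and hence $\widehat{\HFK}(L;\Q)$, sits on a single $\delta$-diagonal; that is, $\widehat{\HFK}(L;\Q)$ is thin, exactly as $\widehat{\HFK}(T(2,10))$ is.

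Finally I would pin down $\widehat{\HFK}(L;\Q)$ exactly. A thin knot Floer homology admits no cancellation in its graded Euler characteristic, so the rank in each Alexander grading equals the absolute value of the corresponding coefficient of the Conway (equivalently Alexander) polynomial, while the Maslov gradings are fixed by the single $\delta$-diagonal. The Conway polynomial of $L$ is recovered from the graded Euler characteristic of $\Kh(L)$, which agrees with that of $T(2,10)$; together with $\lk(L_1,L_2)=5$ this forces $\widehat{\HFK}(L;\Q)\cong\widehat{\HFK}(T(2,10);\Q)$, and Theorem~\ref{HFKT210} then gives $L\cong T(2,10)$. The Batson--Seed spectral sequence can serve as an auxiliary check at the component stage: its graded form~\ref{GradedBatsonSeed} bounds $\rank_{\Q}\Kh(L_1;\Q)\cdot\rank_{\Q}\Kh(L_2;\Q)$ above by $\rank_{\Q}\Kh(L;\Q)$ and so constrains the summand knots. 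I expect the main obstacle to be the passage through Dowlin's spectral sequence: the rank inequality~\ref{DowlinReduced} alone gives only $\rank_{\Q}\widehat{\HFK}(L;\Q)\leq 2\rank_{\Q}\widetilde{\Kh}(L;\Q)$, which need not be sharp, so the exact identification genuinely relies on the thinness argument to supply both the diagonal structure and the matching rank, rather than on a naive dimension count.
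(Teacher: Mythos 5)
There is a genuine gap, and it sits exactly at the step your proposal leans on hardest. You assert that ``the Conway polynomial of $L$ is recovered from the graded Euler characteristic of $\Kh(L)$,'' but the graded Euler characteristic of Khovanov homology is the \emph{Jones} polynomial, not the Alexander/Conway polynomial, and the Jones polynomial does not determine the Alexander polynomial. Nothing in your hypotheses recovers $\Delta_L$, and Dowlin's spectral sequence cannot substitute for it: it respects only the \emph{relative} $\delta$-grading, so it transports no information from the quantum grading of $\Kh(L,\mathbf{p};\Q)$ to the Alexander grading of $\widehat{\HFK}(L;\Q)$. Consequently your (correct) deductions that $\widehat{\HFK}(L;\Q)$ is $\delta$-thin with $\rank\leq 20$ and that $\lk(L_1,L_2)=5$ do not pin down the Alexander grading distribution, and the claimed identification $\widehat{\HFK}(L;\Q)\cong\widehat{\HFK}(T(2,10);\Q)$ is unsupported. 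This is not a pedantic worry: the paper's own proof (via Proposition~\ref{combinedHFKKHT210detection}) runs into concrete thin, rank-$20$, symmetric candidate complexes---for instance configurations containing summands such as $B_{-6}[\frac{3}{2},\frac{-5}{2}]\oplus B_{-6}[\frac{-5}{2},\frac{3}{2}]$ or the various $l=6,7$ cases---whose Alexander polynomials genuinely differ from that of $T(2,10)$. These are eliminated one by one by computing the Conway polynomial \emph{from the candidate} $\widehat{\HFL}(L;\Q)$ and contradicting the known linking number $5$ (Hoste), or by braid-axis detection plus the dual Thurston polytope bound of Lemma~\ref{Braidpolytope}. In other words, the Conway-polynomial argument runs in the opposite direction from the one you propose: linking number is the known quantity, and the Alexander polynomial is an output of each case, not an input.

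Structurally, your first stage is fine and matches the paper: the Lee spectral sequence and quantum-grading parity give two components and $\lk(L_1,L_2)=5$ (Lemma~\ref{KHcomponentsgenuslinking}, which also extracts $\chi(L)\leq -8$ from the $s$-invariant, a condition you will need), and thinness of $\Kh(L,\mathbf{p};\Q)$ follows from thinness of $\Kh(L;\Q)$. But to repair the endgame you would have to follow something like the paper's route: use Proposition~\ref{E2collapsed} to decompose the $E_2$-collapsed complex $\widehat{\CFL}(L;\Q)$ into $B_d$, $X^l_d$, $Y^l_d$ summands, which requires first knowing the components are unknots (so that no $V^l_d$, $H^l_d$ summands occur); that unknottedness step (Lemma~\ref{KHT2102components}) is itself nontrivial for $T(2,10)$, since Batson--Seed in the form of inequality~\ref{GradedBatsonSeed} leaves open a companion knot with $\rank(\widetilde{\Kh};\Z/2)=5$, ruled out only by a further Dowlin-plus-$E_2$-collapsed analysis including $\widehat{\HFK}(T(2,\pm 5))$-like candidates. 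Treating Batson--Seed as an ``auxiliary check'' therefore understates its role. Once the case analysis lands on $\widehat{\HFL}(L;\Q)\cong\widehat{\HFL}(T(2,10);\Q)$, one concludes by link Floer detection; your plan to route through Theorem~\ref{HFKT210} would also be legitimate at that point, but only after the Alexander-graded structure has been established by the casework your proposal skips.
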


\[ \begin{array}{c|c c c c c c c c c}
\label{Table 1}
24&&&&&&&&&\Z\\22&&&&&&&&\Z&\Z\\
 20&&&&&&&&\Z_2
 \\
 
 18&&&&&&\Z&\Z
 \\
 
 16&&&&&&\Z_2
 \\
 14&&&&\Z&\Z\\
 
 12&&&&\Z_2\\
 
 10&&&\Z\\
 
 8&\Z\\
 
 6&\Z\\
\hline
&0&1&2&3&4&5&6&7&8

\end{array}
\]

Table 1. $\Kh(T(2,8);\Z)$: The horizontal axis indicates the $i$ grading, while the vertical axis denotes the $j$ grading

\[\begin{array}{c|c c c c c c c c c c c}
\label{Table 2}
28&&&&&&&&&&&\Z\\
26&&&&&&&&&&\Z&\Z\\
24&&&&&&&&&&\Z_2\\22&&&&&&&&\Z&\Z\\
 22&&&&&&&&\Z_2
 \\
 
 20&&&&&&\Z&\Z
 \\
 
 18&&&&&&\Z_2
 \\
 16&&&&\Z&\Z\\
 
 14&&&&\Z_2\\
 
 12&&&\Z\\
 
 10&\Z\\
 
 8&\Z\\
\hline
&0&1&2&3&4&5&6&7&8&9&10

\end{array}
\]

Table 2. $\Kh(T(2,10);\Z)$: The horizontal axis indicates the $i$ grading, while the vertical axis denotes the $j$ grading

\vspace{.5in}

This requires the Lee, Batson-Seed, and Dowlin spectral sequences, as well as an analysis similar to that of section~\ref{Knot Floer Homology Detects $T(2,8),T(2,10)$}.

$\Kh(T(2,m);\Z)$ was computed in  \cite{khovanov2000categorification}, and is shown in Tables 1 and 2 in the $m=8$ and $m=10$ cases. Note in particular that $\Kh(T(2,2n);\Z)$ is \emph{Khovanov-thin}, meaning that $\Kh(T(2,2n);\Z)$ is supported in two adjacent $j-2i$ gradings.

To prove Theorems~\ref{KHTT28} and~\ref{KHT210} we first show that any link $L$ with $\Kh(L;\Z)\cong\Kh(T(2,2n);\Z)$ has two components, with linking number $n$ and $\chi(L)\geq 2-2n$. We then show that the two components are unknotted in the $n=4,5$ cases. After that we conclude by showing that $L$ must bound an annulus after reversing the orientation of one of the components, thereby completing the proofs.

\begin{lemma}\label{KHcomponentsgenuslinking}
Suppose $\Kh(L;\Z)\cong\Kh(T(2,2n);\Z)$. Then $L$ is a two component link with linking number $n$ and $\chi(L)\leq 2-2n$.
\end{lemma}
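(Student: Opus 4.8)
The plan is to read off the three conclusions from the three spectral sequences recalled above, one each. First I would record that $\Kh(T(2,2n);\Q)$ has total rank $2n+2$ and is supported entirely in even quantum gradings. By the stated agreement between the parity of the $j$-gradings and the parity of the number of components, $|L|$ is even. The Lee spectral sequence has $E_1$-page $\Kh(L;\Q)$ and abuts to $\LLL(L)$, whose total dimension is $2^{|L|}$; hence $2^{|L|}\le \rank\Kh(L;\Q)=2n+2$. For $n\le 6$ -- in particular for the cases $n=4,5$ at hand -- this forces $|L|\le 3$, and evenness then gives $|L|=2$.

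To compute the linking number I would exploit that the Lee sequence respects the homological grading, so that $\rank\Kh^i(L;\Q)\ge \rank\LLL^i(L)$ for every $i$, together with the fact that for a two-component link the four Lee generators occupy homological gradings $0$ and $2\lk(L_1,L_2)$. Inspecting the tables, $\Kh(T(2,2n))$ has homological rank at least two in exactly the two gradings $i=0$ and $i=2n$; the two pairs of Lee generators must therefore sit in these gradings, forcing $2\lk(L_1,L_2)=2n$, i.e. $\lk(L_1,L_2)=n$.

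The remaining and by far the hardest point is the lower bound $\chi(L)\ge 2-2n$, equivalently that the Seifert genus of $L$ is at most $n-1$. Here the plan is to pass to knot Floer homology through Dowlin's spectral sequence from $\Kh(L,\mathbf p;\Q)$ to $\widehat\HFK(L;\Q)$: since $\Kh(T(2,2n))$ occupies two adjacent $\delta$-gradings and Dowlin's sequence respects the relative $\delta$-grading, $\widehat\HFK(L;\Q)$ is itself thin. Because $\widehat\HFK$ detects the maximal Euler characteristic of a surface bounding $L$, it then suffices to bound the top Alexander grading $A_{\max}$ of $\widehat\HFK(L)$ by $n$. The hard part will be exactly this: Dowlin's sequence transmits only $\delta=A-M$ data, not $A$ itself, so I would pin the absolute grading by combining the constraints that the Alexander gradings lie in $\Z+\tfrac n2$ (forced by $\lk=n$), the conjugation symmetry of $\widehat\HFK$, the rank estimate $\rank\widehat\HFK(L;\Q)\le 2\,\rank\widetilde\Kh(L;\Q)$, and the Maslov-grading restriction coming from the spectral sequence $\widehat\HFK(L)\Rightarrow\widehat\HF(S^1\times S^2)$, whose target is supported in Maslov gradings $\pm\tfrac12$. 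Turning the width-two $\delta$-support into the sharp bound $A_{\max}\le n$ is the crux of the lemma, and is where I expect to spend the most effort.

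Finally, as a consistency check I would note that thinness of $\Kh(L)$ together with the Lee sequence pins the Rasmussen invariant to $s(L)=2n-1$, so that the slice-genus inequality $s(L)\le 1-\chi(L)$ yields the complementary bound $\chi(L)\le 2-2n$; together these give $\chi(L)=2-2n$, although only the stated inequality is needed downstream. The genuinely new ingredient beyond the knot-Floer detection of $T(2,8)$ and $T(2,10)$ is thus the transfer of thinness and of the genus bound across Dowlin's sequence; the determination of $|L|$ and of the linking number is routine grading bookkeeping.
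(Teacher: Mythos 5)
Your determination of the number of components and of the linking number is essentially the paper's argument. One caveat: your count of components via the total-rank bound $2^{|L|}\le\rank\Kh(L;\Q)=2n+2$ only works for $n\le 6$, whereas the paper runs the graded version of the same Lee-theoretic argument -- there are exactly two homological gradings ($i=0$ and $i=2n$) in which $\rank\Kh^i(L;\Q)\ge 2$, and each component contributes a pair of Lee generators concentrated in a single homological grading, so $|L|\le 2$ for every $n$. Since you invoke precisely this graded inequality for the linking-number step, the fix costs you nothing; as written, though, your component count does not prove the lemma in the stated generality.

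For the Euler characteristic bound you have been led astray by what is evidently a sign typo in the statement. The paper's own proof, and the only direction ever used downstream (condition 3 of Propositions~\ref{combinedHFKKHT28detection} and~\ref{combinedHFKKHT210detection} reads $\chi(L)\le 2-2n$, which is what forces the maximal Alexander grading of $\widehat{\HFK}(L;\Q)$ to be large), is the bound $\chi(L)\le 2-2n$: thinness of $\Kh(L)$ pins the Rasmussen--Beliakova--Wehrli invariant, and $s(L)\le 1-\chi(L)$ does the rest. This is exactly the paragraph you relegate to a ``consistency check'' -- which, incidentally, you execute more carefully than the paper, since you correctly extract $s(L)=2n-1$ where the paper's proof asserts $s(L)=n+1$. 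Your assertion that ``only the stated inequality is needed downstream'' is therefore backwards. Meanwhile the bulk of your proposal attacks the literal inequality $\chi(L)\ge 2-2n$ via Dowlin's spectral sequence and the reduction to $A_{\max}\le n$; that reduction is sound (via Ni's detection of maximal Euler characteristic), but you concede that converting width-two $\delta$-support into $A_{\max}\le n$ is unresolved, so this direction is a genuine gap in your writeup -- and it is also unnecessary: nothing in the paper uses it, and it is not what the paper proves. In short, promote your final paragraph to the proof of the Euler characteristic claim (with the inequality corrected to $\chi(L)\le 2-2n$), patch the component count with the graded Lee argument, and discard the Dowlin detour, which belongs to the later propositions where thinness of $\widehat{\HFK}(L;\Q)$ is actually transferred across Dowlin's spectral sequence.
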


We prove this Lemma using Lee's spectral sequence~\cite{lee2005endomorphism}, which has $\Kh(L;\Q)$ as $E_1$ page and $E_\infty$ page a complex of rank $2^{n}$ where $n$ is the number of components of $L$.

\begin{proof}[Proof of Lemma~\ref{KHcomponentsgenuslinking}]
Suppose $L$ is as in the Theorem. By the universal coefficient theorem there are exactly two homological gradings in which the rank of $\Kh^i(L;\Q)$ is at least two, namely $i=0$ and $i=2n$. Since $\rank(\Kh^i(L;\Q))\geq\rank(\LLL^i(L;\Q))$, and $\LLL^i(L;\Q)\cong\bigoplus_{i=1}^m\Q^2_{a_i}$, where $a_i\in\Z$, $m$ is the number of components of $L$, it follows that $L$ can have at most two components. To see that $L$ has exactly two components note that $L$ has an even number of components since $\Kh(L;\Z)$ is supported in even quantum gradings~\cite[Proposition 24]{khovanov2000categorification}.

The linking number of the two components of $L$ is half the difference in the homological gradings of the generators which persist under Lee's spectral sequence ~\cite[Proposition 4.3]{lee2005endomorphism}, hence the linking number of $L$ is $n$.

Finally we note that since $L$ is Khovanov-thin, with generators supported in the gradings $j-2i=2n,2n+2$ it has $s$-invariant given by $n+1$, whence $\chi(L)\leq 2-2n$~\cite{beliakova2008categorification}.
\end{proof}

We can now prove the following two lemmas:

\begin{lemma}\label{KHt228components}
		Suppose $\Kh(L;\Z)\cong\Kh(T(2,8);\Z)$. Then $L$ has unknotted components.
	\end{lemma}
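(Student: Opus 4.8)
The plan is to bound the Khovanov homology of the two components via the Batson--Seed spectral sequence and then invoke unknot detection. By Lemma~\ref{KHcomponentsgenuslinking} we may write $L=L_1\cup L_2$ with each $L_i$ a knot and $\lk(L_1,L_2)=4$. Reading off Table~1 and applying the universal coefficient theorem (each $\Z/2$ summand contributes $2$ to the rank over $\Z/2$) gives $\rank(\Kh(L;\Z/2))=16$, while $\rank(\Kh(L_i;\Z/2))=2\rank(\widetilde{\Kh}(L_i;\Z/2))$ by Shumakovitch~\cite{shumakovitch2014torsion}. Setting $r_i=\rank(\widetilde{\Kh}(L_i;\Z/2))$, the Batson--Seed spectral sequence over $\Z/2$~\cite{batson2015link}, which abuts to $\Kh(L_1;\Z/2)\otimes\Kh(L_2;\Z/2)$, yields $4r_1r_2=\rank(\Kh(L_1;\Z/2))\cdot\rank(\Kh(L_2;\Z/2))\le\rank(\Kh(L;\Z/2))=16$, so $r_1r_2\le 4$.

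Since the reduced Khovanov homology of a knot has odd total rank (its graded Euler characteristic specialises to the determinant, which is odd for a knot), each $r_i$ is odd, so up to relabelling the components $(r_1,r_2)=(1,1)$ or $(r_1,r_2)=(1,3)$. In the first case each component has reduced Khovanov homology of rank one, hence is unknotted by Kronheimer--Mrowka~\cite{kronheimer2011khovanov}, which is the desired conclusion.

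It therefore remains to rule out $(r_1,r_2)=(1,3)$, which is the heart of the matter. Here $L_1$ is unknotted and $L_2$ is a nontrivial knot. Dowlin's inequality~\ref{DowlinReduced} in the case $n=1$, together with the universal coefficient theorem, gives $\rank(\widehat{\HFK}(L_2;\Q))\le\rank(\widetilde{\Kh}(L_2;\Q))\le\rank(\widetilde{\Kh}(L_2;\Z/2))=3$; as a nontrivial knot has knot Floer homology of rank at least three, equality holds and $L_2$ is a trefoil, since knot Floer homology detects the trefoils~\cite{binns2020knot}. The Batson--Seed spectral sequence now abuts to $\Kh(L_1\sqcup L_2;\Z/2)=\Kh(L_1;\Z/2)\otimes\Kh(L_2;\Z/2)$, which by Shumakovitch's module structure is $\widetilde{\Kh}(L_2;\Z/2)$ tensored with two copies of the two-dimensional space $\Z/2_{(0,1)}\oplus\Z/2_{(0,-1)}$. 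A direct computation of this tensor product for $L_2=T(2,\pm3)$ shows that it carries rank three in some $(i-j)$-grading, whereas from Table~1 the group $\Kh(T(2,8);\Z/2)$ has rank at most two in every $(i-j)$-grading; this contradicts the graded Batson--Seed inequality~\ref{GradedBatsonSeed}. Hence $(r_1,r_2)=(1,3)$ cannot occur, so $r_1=r_2=1$ and both components are unknotted.

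The one genuinely delicate point is this final grading computation: one must verify that tensoring the unknot's Khovanov homology with that of the trefoil concentrates three generators into a single $(i-j)$-diagonal, so that the per-diagonal rank bound of two imposed by the thinness of $\Kh(T(2,8))$ forces the contradiction. Reducing the knotted component to a trefoil via Dowlin's spectral sequence is precisely what makes this computation tractable, since an \emph{a priori} rank-three reduced group need not concentrate its rank in this way.
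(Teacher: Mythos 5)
Your proof is correct, and it closes with the same decisive move as the paper -- the graded Batson--Seed inequality pitted against the thinness of $\Kh(T(2,8))$ -- but it reaches the ``unknot or trefoil'' dichotomy by a genuinely different route. The paper's proof bounds $\rank(\Kh(L_i;\Z/2))\leq 8$ via the ungraded Batson--Seed inequality and then quotes Baldwin--Sivek's Khovanov-theoretic trefoil detection (together with Kronheimer--Mrowka) to conclude each component is an unknot or a trefoil, before excluding trefoil components with the graded inequality~\ref{GradedBatsonSeed} over $\Q$. You instead sharpen the ungraded bound using Shumakovitch's $\rank(\Kh(K;\Z/2))=2\rank(\widetilde{\Kh}(K;\Z/2))$ and the parity of the reduced rank to force $(r_1,r_2)\in\{(1,1),(1,3)\}$, handle $(1,1)$ by Kronheimer--Mrowka alone, and in the $(1,3)$ case identify the knotted component as a trefoil through Dowlin's inequality~\ref{DowlinReduced} and the fact that rank-three knot Floer homology characterises the trefoils; your closing computation is the paper's trick carried out over $\Z/2$ rather than $\Q$, and it does work: $\Kh(U;\Z/2)\otimes\Kh(T(2,\pm 3);\Z/2)$ has diagonals of rank three (at $i-j=-2,-4$ for $T(2,3)$ and $i-j=2,4$ for its mirror), while every $i-j$ diagonal of $\Kh(T(2,8);\Z/2)$ has rank at most two, so the shift by $2\lk(L_1,L_2)=8$ yields the contradiction. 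What your substitution buys is independence from Baldwin--Sivek's Khovanov trefoil detection; notably, it is precisely the strategy the paper itself is forced to adopt for $T(2,10)$ in Lemma~\ref{KHT2102components}, where no classification of knots with $\rank(\Kh(K;\Z/2))=10$ exists, so your argument in effect unifies the two cases at the cost of importing the Floer-theoretic machinery one lemma early.

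Two small repairs. First, trefoil detection by knot Floer homology is due to Ghiggini, not~\cite{binns2020knot} (which concerns link Floer detection of the links $T(2,2n)$). Second, the assertion that $\rank(\widehat{\HFK}(L_2;\Q))=3$ forces a trefoil deserves a line of justification rather than a bare citation: symmetry and oddness of the total rank place single generators in Alexander gradings $g,0,-g$; rank one in the top grading gives fiberedness by Ghiggini--Ni~\cite{ghiggini2008knot}; the Baldwin--Vela-Vick bound~\cite{baldwin_note_2018} (Theorem~\ref{HFKrank} here with $n=1$) then forces $g=1$ since the next-to-top grading would otherwise be nonzero; and among genus-one fibered knots the figure eight has rank five, leaving only the trefoils. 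Neither point affects the validity of the argument.
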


\begin{lemma}\label{KHT2102components}
	Suppose $\Kh(L;\Z)\cong\Kh(T(2,10);\Z)$. Then $L$ has unknotted components.
\end{lemma}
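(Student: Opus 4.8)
The plan is to combine the Batson--Seed and Dowlin spectral sequences with the low-rank knot Floer detection results. First I would record the two features of $\Kh(T(2,2n);\Q)$ that drive the argument: it is Khovanov-thin of total rank $2n+2$ (here $2n+2=12$), and, reading off Table~2, it has rank \emph{at most two} in every $i-j$ grading. By Lemma~\ref{KHcomponentsgenuslinking} we already know $L=L_1\cup L_2$ is a two-component link with $\lk(L_1,L_2)=n=5$.

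Next I would bound the components with the ungraded Batson--Seed spectral sequence, which gives $\rank_\Q\Kh(L_1)\cdot\rank_\Q\Kh(L_2)\leq\rank_\Q\Kh(L)=2n+2$. Since the Lee spectral sequence of a knot converges to a rank-two group, each $\rank_\Q\Kh(L_i)$ is even and at least $2$, with the value $2$ forcing $L_i$ to be the unknot by Kronheimer--Mrowka~\cite{kronheimer2011khovanov}. For $n=5$ this leaves only the factorizations $(2,2)$, $(2,4)$, $(2,6)$, so at least one component, say $L_1$, is an unknot, while $\rank_\Q\Kh(L_2)\in\{2,4,6\}$. If $L_2$ has rank $2$ it is an unknot and we are done.

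To identify $L_2$ in the remaining cases I would pass to knot Floer homology: from $\rank_\Q\widetilde{\Kh}(L_2)\leq\rank_\Q\Kh(L_2)\leq 6$ and Dowlin's inequality~$(\ref{DowlinReduced})$ applied to the knot $L_2$ one obtains $\rank_\Q\widehat{\HFK}(L_2)\leq 6$. By the oddness of $\widehat{\HFK}$-rank for knots together with the fact that knot Floer homology detects the unknot and the trefoil, the only nontrivial possibilities are that $L_2$ is a trefoil, the figure eight, or $T(2,\pm5)$; all of these are alternating, so their Khovanov homology is known explicitly.

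The main obstacle, and the heart of the proof, is excluding these nontrivial candidates, which I would do with the graded Batson--Seed inequality~$(\ref{GradedBatsonSeed})$. Because $\Kh(L_1)\otimes\Kh(L_2)\cong\Kh(L_1\sqcup L_2)$ carries each generator of $\Kh(L_2)$ at $i-j=d$ to a pair at $i-j=d\pm 1$, the Lee pair of $L_2$ (two generators two $i-j$ units apart) together with an adjacent knight-move generator produces an $i-j$ grading of $\Kh(L_1)\otimes\Kh(L_2)$ of rank at least $3$; one checks this directly for each of the finitely many explicit homologies above. After the shift by $2\lk=10$, inequality~$(\ref{GradedBatsonSeed})$ would then force $\Kh(L)\cong\Kh(T(2,10))$ to have rank at least $3$ in some $i-j$ grading, contradicting the rank-$\leq 2$ property recorded in the first step. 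This excludes every nontrivial $L_2$ for both chiralities, since the sign of the linking number fixes the direction of the shift, and hence both components are unknots. The argument for Lemma~\ref{KHt228components} is identical, with $2n+2=10$ and only the trefoil left to exclude.
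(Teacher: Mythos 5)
Your overall architecture matches the paper's proof of this lemma (Lee's spectral sequence to pin down two components and the linking number, Batson--Seed to force one unknotted component and bound the other, Dowlin to pass to knot Floer homology, graded Batson--Seed to kill the trefoil and figure eight cases), but the branch where $\rank(\widehat{\HFK}(L_2;\Q))=5$ is where the real content of the lemma lies, and your treatment of it has two genuine gaps. First, you assert that rank five forces $L_2$ to be the figure eight or $T(2,\pm5)$; this is not automatic, and the paper spends its case (3) establishing -- via the $E_2$-collapsed structure theorem (Proposition~\ref{E2collapsed}), the symmetry of $\widehat{\HFL}(L;\Q)$, and rank bounds on the two-component link coming from Dowlin's spectral sequence -- only the weaker statement that $\widehat{\HFK}(L_2;\Q)$ \emph{agrees with} that of the figure eight or of $T(2,\pm5)$. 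Second, and fatally, you then exclude these candidates by declaring them ``alternating, so their Khovanov homology is known explicitly'': nothing in the argument identifies the isotopy type of $L_2$ -- knot Floer homology is not known (and is nowhere claimed in this paper) to detect $T(2,\pm5)$ -- so $\Kh(L_2;\Q)$ is simply not available, and the graded Batson--Seed computation you propose cannot be run. The paper instead disposes of this case entirely within link Floer homology: the summand $\widehat{\HFK}(T(2,\pm5);\Q)\otimes V[\pm\frac{5}{2},a]$ of $\widehat{\HFL}(L;\Q)$, its symmetric partner, and the Dowlin rank bound force $\widehat{\HFL}(L;\Q)$ to be supported only in $A_1=\pm\frac{1}{2}$, contradicting the spectral sequence to $\widehat{\HFL}(L_1)\otimes V[\frac{5}{2}]$ for the unknotted component $L_1$.

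There is also a concrete error in a case you \emph{can} run: for the figure eight, $\Kh(4_1;\Q)\otimes\Kh(U;\Q)$ has rank at most two in every $i-j$ grading (the $i-j$ gradings of $\Kh(4_1;\Q)$ are $3,1,0,0,-1,-3$, and tensoring with $\Q_1\oplus\Q_{-1}$ never produces rank three), so your claimed ``rank at least $3$'' never materializes, and your recorded fact that $\Kh(T(2,10);\Q)$ has rank at most two everywhere yields no contradiction. What saves the comparison -- and what the paper's computation actually uses, modulo a garbled phrase in its text -- is the sharper count: $\Kh(T(2,10);\Q)$ has rank two in exactly \emph{one} $i-j$ grading, whereas the shifted tensor product requires rank two in several. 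Finally, note that your own first step can be sharpened so that the problematic rank-five branch disappears: over $\Q$ one has $\rank(\Kh(L_2;\Q))=2\rank(\widetilde{\Kh}(L_2;\Q))$ by the reduced/unreduced splitting over $\Z[1/2]$, so your ungraded Batson--Seed bound $\rank(\Kh(L_2;\Q))\le 6$ gives $\rank(\widetilde{\Kh}(L_2;\Q))\le 3$, and Dowlin's inequality then yields $\rank(\widehat{\HFK}(L_2;\Q))\le 3$, i.e.\ $L_2$ is an unknot or a trefoil, with the trefoils excluded exactly as in the paper's cases (1)--(2). If you verify the $\Q$-coefficient Batson--Seed input and that splitting, this repair would give a proof genuinely shorter than the paper's; as written, however, the proposal does not close the rank-five case.
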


Both of these use the Batson-Seed link splitting spectral sequence~\cite{batson2015link} to bound the rank of the components of $L$ in certain combinations of $i$ and $j$ gradings. We use Corollary 3.4 of~\cite{batson2015link} which states that if $L$ is a link consisting of two components $L_1,L_2$ then
\begin{align}\label{GradedBatsonSeed}
    \rank^l(\Kh(L;F))\geq \rank^{l+2\lk(L_1,L_2)}(\Kh(L_1;F)\otimes\Kh(L_2;F))
\end{align}
 
 Here $F$ is a field -- we will use both $\Z_2$ and $\Q$ accordingly,  and $\rank^l$ denotes the $i-j=l$ grading part of the relevant vector space.
 
 For Lemma~\ref{KHt228components} we use the fact that the only knots $K$ with $\rank(\Kh(K;\Z_2))\leq 8$, are the trefoils and unknot. This follows from Kronheimer-Mrowka's unknot detection result \cite{kronheimer2011khovanov}, as  well as Baldwin-Sivek's trefoil detection results \cite{baldwin2022khovanov} and the fact that for knots $K$ $\rank_{\Z_2}(\Kh(K;\Z_2))$ is of the form $2+4n$ for some $n\geq 0$.
 
 \begin{proof}[Proof of Lemma~\ref{KHt228components}]
Suppose $L$ is as in the hypothesis of the Lemma. Lemma~\ref{KHcomponentsgenuslinking} implies that $L$ consists of two components, $L_1$ and $L_2$ of linking number $4$. From $\Kh(L;\Z)$, as shown in Table 1, we deduce that $\rank_{\Z_2}(\Kh(L;\Z_2))=16$ by the universal coefficient theorem, so the Batson-Seed spectral sequence implies that $\rank(\Kh(L_i;\Z_2))\leq 8$, whence each component of $L$ is an unknot or a trefoil~\cite{baldwin2022khovanov}. Indeed, we readily see that at least one component is unknotted.

	We now consider $\Kh(L;\Q)$, which we can determine from $\Kh(L;\Z)$ via the universal coefficient theorem. Suppose $L$ has a right hand trefoil component. Then \\
	$\rank_{i-j=-4}(\Kh(T(2,3);\Q)\otimes\Kh(U;\Q)))=2$, but $\rank_{i-j=-12}(\Kh(L;\Q))=1$ contradicting equation~\ref{GradedBatsonSeed}. Similarly, $\rank_{i-j=2}(\Kh(T(2,-3);\Q)\otimes\Kh(U;\Q))=3$ which contradicts $\rank_{i-j=-6}(\Kh(L;\Q))=1$ by Equation~\ref{GradedBatsonSeed}. Thus both components of $L$ are unknotted.
\end{proof}

Lemma~\ref{KHT2102components} requires some extra analysis since there is no complete classification of knots with $\rank(\Kh(L;\Z_2))=10$. To deal with this latter case we  use Dowlin's spectral sequence and Proposition~\ref{E2collapsed}.

\begin{proof}[Proof of Lemma~\ref{KHT2102components}]
	Suppose $\Kh(L;\Z)\cong\Kh(T(2,10);\Z)$. Then from Table 2 and the universal coefficient theorem we can see that $\rank(\Kh(L;\Z_2))= 20$. Batson-Seed's spectral sequence implies that at least one component of $L$ is unknotted, and the other is a knot $L_2$ with $\rank(\Kh(L_2;\Z_2))\leq10$. If $\rank(\Kh(L_2;\Z_2))\leq10$ then $\rank(\widetilde{\Kh}(L_2;\Z_2))\leq 5$, so that $K$ is an unknot, trefoil, figure eight or a knot with $\rank(\widetilde{\Kh}(L_2;\Z_2))=5$. Let $U$ denote the unknot. To see that $L_2$ must also be unknotted we proceed by cases:
\begin{enumerate}
	\item $L_2$ is $T(2,3)$. In this case: \begin{align*}\Kh(T(2,3);\Q)\otimes\Kh(U;\Q)&\cong(\Q_{-6}\oplus\Q_{-3}^2\oplus\Q_{-1})\otimes(\Q_{-1}\oplus\Q_1)\\&\cong\Q_{-7}\oplus\Q_{-5}\oplus\Q_{-4}^2\oplus\Q_{-2}^3\oplus\Q_0\end{align*}
		But $\rank(\Kh(T(2,10);\Q)$, is rank one in exactly one $i-j$ grading, a contradiction. 
	
	\item $L_2$ is $T(2,-3)$. Then \begin{align*}\Kh(T(2,-3);\Q)\otimes\Kh(U;\Q)&\cong(\Q_{6}\oplus\Q_{3}^2\oplus\Q_{1})\otimes(\Q_{1}\oplus\Q_{-1})\\&\cong \Q_{7}\oplus\Q_{5}\oplus\Q_{4}^2\oplus\Q_{2}^3\oplus\Q_0\end{align*}
	
	But $\Kh(T(2,10);\Q)$, is rank one in exactly one $i-j$ grading, a contradiction.

	\item $\rank(\widetilde{\Kh}(L_2;\Z_2))=5$. In this case we have that $\rank(\widetilde{\Kh}(L_2;\Q))\leq 5$, whence $\rank(\widehat{\HFK}(L_2;\Q))\leq 5$. If $\rank(\widehat{\HFK}(L_2;\Q)<5$, then $L_2$ is a trefoil or the unknot, cases which we have already dealt with. Thus we may take $\rank(\widehat{\HFK})(L_2;\Q)=5$. We show that $L_2$ is a figure eight or has the same knot Floer homology as $T(2,\pm5)$. Suppose $L_2$ is of genus $g>0$. Note that $\rank(\widehat{\HFK}(L_2;\Q,g))\leq 2$. Note also that since $\Kh(L;\Z)$ is Khovanov thin, $\Kh(L,p;\Q)$ is $\delta$-thin~\cite{baldwin2017khovanov}, whence $\widehat{\HFK}(L)$ is $\delta$-thin. We proceed by cases.\begin{enumerate}
	    \item Suppose $\rank(\widehat{\HFK}(L_2;\Q,g))= 2$. Then $\widehat{\CFK}(L_2;\Q)$ contains a differential of length $2g$ and one of length $g$. From Proposition~\ref{E2collapsed}, we see that this yields a summand of $(\widehat{\CFL}(L;\Q),\partial)$ of rank $8g$ and another of rank $4g$. Thus $\rank(\widehat{\HFL}(L;\Q))\geq 12g$. It immediately follows that $g=1$. The symmetry of $\widehat{\HFL}(L)$ implies that $\rank(\widehat{\HFL}(L))\geq 24$, a contradiction.
	    
	    \item Suppose $\rank(\widehat{\HFK}(L_2;\Q,g))= 1$. Consider $\widehat{\CFL}(L_2;\Q)$. If $g=1$, then $L_2$ is the figure eight knot. If $g>1$ there are two possible forms for $\widehat{\HFK}(L_2)$, as shown in Figure~\ref{rank5hfk}. To see this we need to use the fact that if $\rank(\widehat{\HFK}(L_2,g-1))=1$, then there cannot be both differentials from the top grading to the next to top grading and from the next to bottom grading to the bottom grading. This is the algebraic content of~\cite{baldwin_note_2018}.

					\begin{figure}[h]
    \centering
    \includegraphics[width=7cm]{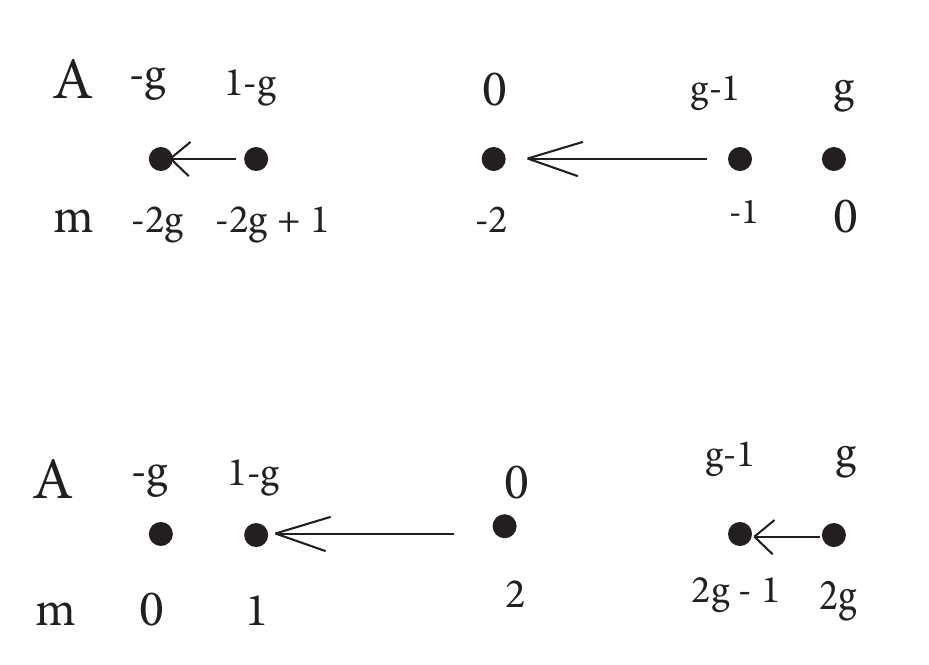}
    \caption{Different possibilities for $\widehat{\CFK}(L_2;\Q)$: here we denote the Alexander grading of the generators by $A$, and the Maslov grading by $m$.\label{rank5hfk}}
    \end{figure}
    
    In either case, since there is both a length one differential and a length $g-1$ differential, $\rank(\widehat{\HFL}(L;\Q))\geq 4(g-1)+4+4=4g+4$, whence $g\leq 4$. Suppose we are in the first case.  Indeed, since the minimal $A_2$ grading of these generators is $\frac{-3}{2}$, symmetry yields at least another $4(g-1)$ generators, so that $\rank(\widehat{\HFL}(L;\Q))\geq 8(g-1)+4+4+4$, so that $g=2$, and $\widehat{\HFK}(L_2;\Q)\cong \widehat{\HFK}(T(2,5);\Q)$.
	    
	    Suppose we are in the second case. We have that $g\in\{2,3,4\}$. If $g=2$ then $\widehat{\HFK}(L_2)\cong\widehat{\HFK}(T(2,-5))$. If $g=4$ using the symmetry of $\widehat{\HFL}(L;\Q)$ we find $\rank(\widehat{\HFL}(L;\Q))\geq 24$, a contradiction.
	    
	    If $g=3$, symmetry of $\widehat{\HFL}(L;\Q)$ implies that the $\rank(\widehat{\HFL}(L;\Q))\geq 24$, a contradiction.
	\end{enumerate}

	\item $L_2$ is the figure eight. In this case:
	\begin{align*}\Kh(U;\Q)\otimes\Kh(L_2;\Q)\cong(\Q_3\oplus\Q_1\oplus\Q_0^2\oplus\Q_{-1}\oplus\Q_{-3})\otimes(\Q_1\oplus\Q_{-1})\end{align*}
	
	But $\Kh(T(2,10);\Q)$, is rank one in exactly one $i-j$ grading, a contradiction.

	\item $\widehat{\HFK}(L_2;\Q)\cong\widehat{\HFK}(T(2,5);\Q)$, or $\widehat{\HFK}(L_2;\Q)\cong\widehat{\HFK}(T(2,-5);\Q)$. Since $\lk(L_1,L_2)=\pm 5$, there is a summand $\widehat{\HFK}(T(2,\pm5);\Q)\otimes V[\pm\frac{5}{2},a]$ of $\widehat{\HFL}(L;\Q)$, for some $a\in\pm\frac{5}{2}+\Z$. The symmetry of $\widehat{\HFL}(L;\Q)$ yields another $10$ generators. There can be no additional generators, as this would violate the rank inequality from Dowlin's spectral sequence -- $\rank(\widehat{\HFL}(L);\Q)\leq\rank(\Kh(T(2,10);\Q))=20$, where the final equality is obtained from the universal coefficient theorem. Note that in order that each $A_1$ grading have even rank, we must have that $\widehat{\HFL}(L;\Q)$ is supported only in $A_1$ gradings $\pm\frac{1}{2}$. However since $L_1$ is unknotted and there is a spectral sequence from $\widehat{\HFL}(L)$ to $\widehat{\HFL}(L_1)\otimes V[\frac{5}{2}]$ there must be generators in $A_1$ grading $\frac{5}{2}$, a contradiction.
\end{enumerate}
\end{proof}

	We can now proceed to the proofs of the Khovanov homology detection results. We apply Dowlin's spectral sequence and the classification of $E_2$ collapsed chain complexes given in Lemma~\ref{E2collapsed}. From here it suffices to prove the following pair of propositions.
	
	\begin{proposition}\label{combinedHFKKHT28detection}
	Let $F$ be $\Q$ or $\Z_2$. Suppose $L$ is a link satisfying the following conditions:
	\begin{enumerate}
	\item $L$ has exactly two components, $L_1$ and $L_2$, which are both unknots.
	\item $\lk(L_1,L_2)=4$
	\item $\chi(L)\leq-6$
	    \item $\rank(\widehat{\HFK}(L;F))\leq 16$.
	    \item $\widehat{\HFK}(L;F)$ is $\delta$-thin.
	\end{enumerate}
	
	Then $L$ is $T(2,8)$.
	\end{proposition}
	
		\begin{proposition}\label{combinedHFKKHT210detection}
	Let $F$ be $\Q$ or $\Z_2$. Suppose $L$ is a link satisfying the following conditions:
	\begin{enumerate}
	\item $L$ has two exactly two components, $L_1$ and $L_2$, which are both unknots.
	\item $\lk(L_1,L_2)=5$
	\item $\chi(L)\leq-8$
	    \item $\rank(\widehat{\HFK}(L;F))\leq 20$.
	    \item $\widehat{\HFK}(L;F)$ is $\delta$-thin.
	\end{enumerate}
	
	Then $L$ is $T(2,10)$.
	\end{proposition}
	
Before proving these propositions, we apply them to obtain Theorem's~\ref{KHTT28} and~\ref{KHT210} respectively.

	\begin{proof}[Proof of Theorem~\ref{KHTT28}]
	It suffices to show that $L$ satisfies the conditions in Proposition~\ref{combinedHFKKHT28detection}. Conditions 1,2 and 3 are implied by Lemmas~\ref{KHcomponentsgenuslinking} and~\ref{KHt228components}. To see that condition $4$ holds, observe that $\rank(\Kh(L;\Z_2)=16$ by the universal coefficient theorem. It follows that $\rank(\widetilde{\Kh}(L;\Z_2))=8$ by~\cite{shumakovitch2014torsion}, so that $\rank(\widetilde{\Kh}(L;\Q))\leq 8$ by the universal coefficient theorem. $\rank(\Kh(L,p,\Q))\leq\rank(\widetilde{\Kh}(L;\Q))$ by \cite[Theorem 1.2]{baldwin2017khovanov}, whence $\widehat{\HFK}(L;\Q)\leq 16$.

	To see that condition 5 holds, note that $\Kh(L;\Q)$ is $\delta$-thin, so that $\Kh(L,p;\Q)$ is $\delta$-thin by \cite[Lemma 2.11]{baldwin2017khovanov}.
	\end{proof}
	
	\begin{proof}[Proof of Theorem~\ref{KHT210}]
	    This follows by the same argument as in the proof of Theorem~\ref{KHTT28}.
	\end{proof}
	
We now prove the two requisite lemmas~\ref{combinedHFKKHT28detection}, and ~\ref{combinedHFKKHT210detection}.

		\begin{proof}[Proof of Proposition~\ref{combinedHFKKHT28detection}]
	Suppose $L$ satisfies the conditions given in the Proposition. Consider $\widehat{\CFL}(L;\Q)$. Since $\widehat{\HFK}(L)$ is $\delta$-thin, $\widehat{\CFL}(L;\Q)$ is an $E_2$ collapsed chain complex. Since $\chi(L)\leq -6$ the maximal Alexander grading of $\widehat{\HFK}(L;\Q)$ is at least $3$. 
	
	Consider $\widehat{\CFL}(L)$. Since $L$ has unknotted components, it follows that $(\widehat{\CFL}(L;\Q),\partial)$ cannot contain a $V^l_d$ or $H_d^l$ summand, and that there are Maslov grading $0$ and $-1$ generators with $A_i=2$. Suppose the Maslov grading $0$ generator is of $A_1+A_2$-grading $l$.
 
 Suppose $l>4$. Since $L_1,L_2$ are unknots with linking number $4$, there must be Maslov grading $0$ and Maslov grading $-1$ generators on both the line $A_1=2$ and the line $A_2=2$. In particular there are Maslov grading $0$ generators of bi-Alexander gradings $(2,l-2)$ and $(l-2,2)$ and Maslov grading $-1$ generators of bi-Alexander gradings $(2,l-3)$, $(l-3,2)$. Since these persist under the spectral sequence to $V$, and are suitably identified there, it follows that $(\widehat{\CFL}(L),\partial)$ has summands $X_{-1}^{l-5}[2,2]\oplus X_0^{l-4}[2,2]$. From the symmetry of $\widehat{\HFL}(L;\Q)$, and the fact that $\rank(\widehat{\HFL}(L;\Q))\leq 16$ we see immediately that $l\leq 6$.

					If $l\leq 4$ then there are summands $Y_{0}^{4-l}[l-2,l-2]\oplus Y_{-1}^{5-l}[l-3,l-3]$. Note $l\geq 1$ as else $\rank(\widehat{\HFK}(L))>16$. Similarly $l=2$ would violate the rank bound after accounting for the symmetry of $\widehat{\HFL}(L)$. If $l=1$ then $\rank(\widehat{\HFL}(L))\geq 16$. It follows that there are no additional generators and we immediately see that $L$ bounds an annulus, contradicting the fact that the maximal Alexander grading is at least $3$.		
				If $l\in \{3, 6\}$, then the complex is determined by the symmetry of $\widehat{\HFL}(L;\Q)$. In particular we see that there are copies of $B_d$ in specified bi-gradings. In the $l=6$ case we quickly see that each component is a braid axis by~\cite[Proposition 1]{martin2022khovanov}, which implies that the maximal $A_i$ gradings are $2$, ruling out the $l=6$ case. If $l=3$ we see immediately that $\chi(L)>-6$, a contradiction.
				
				If $l=5$ we readily see that -- apart from in the case that $\widehat{\CFL}(L;\Q)$ contains summands either $B_{-6}[-3,2]\oplus B_{-6}[2,-3]$ or $B_{-1}[2,2]\oplus B_{-11}[-3,-3]$ -- at least one component is a braid axis by~\cite[Propositon 1]{martin2022khovanov}, meaning that the maximal $A_i$ grading is $2$, a contradiction. The $B_{-1}[2,2]\oplus B_{-11}[-3,-3]$ is excluded by noting that upon reversing the orientation of a component of $L$, $L$ bounds an annulus and hence $L$ is $T(2,8)$, a contradiction since it has the wrong link Floer homology type.

    $B_{-6}[-3,2]\oplus B_{-6}[2,-3]$ case can be excluded using Proposition~\ref{prop:algebraicni}, with $x$, $\bar{x}$ and $y$ generators of Alexander bi-gradings $(-3,3),(3,-3)$ and $(3,-2)$ respectively.

    If $l=4$ then by symmetry we have a summand $B_{-8}[-2,-2]$. There are at most two remaining summands. Suppose there are two. Let $B_{i+j-4}[i,j]$ be such a summand. Observe that that $-2\leq i,j\leq 1$, as else the rank of $\widehat{\HFL}(L)$ in the maximal $A_1$ or $A_2$ grading is $2$ but the maximal such grading is not $2$. We may assume without loss of generality that $j\geq i$. If $(i,j)\in\{(-2,0),(-1,0),(-2,1),(-1,1),(0,1)\}$ by applying Proposition~\ref{prop:algebraicni} with $x$ the generator of grading $(i,j+1)$, $\bar{x}$ the generator of grading $(-i,-j-1)$ and $y$ the generator of grading $(-i,-j)$. The cases that $(i,j)=(-2,-1)$ can be excluded by considering the link $L'$ given be reordering the two components of $L$, which reduces this situation to the case $(i,j)=(0,1)$. We can thus conclude that $i=j$. It follows that, upon reversing the orientation of a component, $L$ bounds an annulus. Since $L$ has unknotted components and linking number $4$ it follows that $L$ is $T(2,8)$, as desired.

    If there is only one remaining $B_d$ summand, then it must be of the form $B_d[-1/2,-1/2]$ for some $d$ by the symmetry of Link Floer homology. But this is a contradiction, since $\lk(L)$ is even, so that the Alexander gradings are $\Z$ valued. 

     If there is no other $B_d$ summand, then again we see that upon reversing the orientation of a component of $L$, $L$ bounds an annulus and hence is $T(2,8)$, a contradiction since it has the wrong link Floer homology type.

				\end{proof}

The proof of Proposition~\ref{combinedHFKKHT210detection} is virtually the same, but with some additional case analysis.

\begin{proof}[Proof of Proposition~\ref{combinedHFKKHT210detection}]
	 Suppose $L$ is as in the hypotheses of the Proposition. Since $L$ is delta-thin, $\widehat{\HFL}(T(2,10);\Q)$ is $E_2$-collapsed. Since $\chi(L)\leq -8$ the maximal Alexander grading is at least $n$. Since the components of $L$ are unknotted $\widehat{\CFL}(L;\Q)$ consists only of $X^l_d,Y^l_d$ and $B_d$ summands. Note that $L$ has Maslov index $0$, $-1$ generators in each line $A_i=\frac{5}{2}$. Let $l$ be the $A=A_1+A_2$ grading of the Maslov index $0$ generator. If $l> 5$ then $\widehat{\CFL}(L;\Q)$ has summands $X_{-1}^{l-6}[\dfrac{5}{2},\dfrac{5}{2}]\oplus X_{0}^{l-5}[\frac{5}{2},\frac{5}{2}]$. Note that for $l> 5$, $\rank(X_{-1}^{l-5}[\dfrac{5}{2},\dfrac{5}{2}]\oplus X_{0}^{l-4}[\dfrac{5}{2},\dfrac{5}{2}])=4l-16$. Observe that the symmetry of $\widehat{\HFL}(L;\Q)$ together with the fact that $\rank(\widehat{\HFL}(L;\Q))\leq 20$, imply that $l\leq 7$.

 If $l=7$ then the position of the remaining $B_d$ summands are forced by the symmetry of $\widehat{\HFL}(L)$. We find that the maximal $A_i$ gradings are $\frac{9}{2}$, and each of rank $2$. It follows that each component is braided with respect to the other. But then, since each component should be unknotted, it follows that the maximal Alexander grading should be half the linking number of $L$, i.e. $\frac{5}{2}$, a contradiction.

	If $l=6$, then either $\widehat{\CFL}(L)$ contains a summand $ X_{-1}^{0}[\dfrac{5}{2},\dfrac{5}{2}]\oplus X_{0}^{1}[\dfrac{5}{2},\dfrac{5}{2}]\oplus B_{-13}[\frac{-7}{2},\frac{-7}{2}]$ and at most three extra $B_d$ summands.
 
 If there is one extra $B_d$ summands then must be of the form $B_{-7}[\frac{-1}{2},\frac{-1}{2}]$ by symmetry, so if there are either zero or one extra $B_d$ summands we see that $L$ bounds an annulus upon reversing the orientation of a component, and $L$ is therefore should be $T(2,10)$, a contradiction since $T(2,10)$ has distinct link Floer homology.
 
 If $\widehat{\HFL}(L;\Q)$ contains two extra $B_d$ summands, then unless they are of the form $B_{-1}[\frac{5}{2},\frac{5}{2}]\oplus B_{-13}[\frac{-7}{2},\frac{-7}{2}]$ or $B_{-7}[\frac{5}{2},\frac{-7}{2}]\oplus B_{-7}[\frac{-7}{2},\frac{5}{2}]$, we see immediately that at least one component is a braid axis, and hence the maximal $A_i$ grading is $\dfrac{5}{2}$, a contradiction. The $B_{-1}[\frac{5}{2},\frac{5}{2}]\oplus B_{-13}[\frac{-7}{2},\frac{-7}{2}]$ is excluded by noting that $L$ bounds an annulus upon reversing the orientation of a component whence $L$ is $T(2,10)$, a contradiction since $T(2,10)$ has distinct link Floer homology. The $B_{-7}[\frac{5}{2},\frac{-7}{2}]\oplus B_{-7}[\frac{-7}{2},\frac{5}{2}]$ case is excluded by noting that we can apply Proposition~\ref{prop:algebraicni} with $x$ a generator in $(i,j)$ grading $(\frac{-7}{2},\frac{7}{2})$, $\bar{x}$ a generator of grading $(\frac{7}{2},\frac{-7}{2})$ and $y$ a generator of grading $(\frac{7}{2},\frac{-5}{2})$.

 If there are three additional $B_d$ summands then we note that in order to satisfy the symmetry properties there must be a summand $B_{-7}[\frac{-1}{2},\frac{-1}{2}]$. Unless the remaining $B_d$ summands are $B_{-1}[\frac{5}{2},\frac{5}{2}]\oplus B_{-13}[\frac{-7}{2},\frac{-7}{2}]$, or $B_{-8}[-\frac{5}{2},\frac{7}{2}]$ $B_{-8}[\frac{3}{2},\frac{-5}{2}]$, we have that at least one component, $L_i$ is a braid axis, so again the maximal $A_i$ grading must be $\frac{5}{2}$, a contradiction. In the first case, we see that $L$ bounds an annulus upon reversing its orientation, a contradiction, since no link $T(2,2n)$ has the requisite homology. We can exclude the second case by an application of Proposition~\ref{prop:algebraicni} with $x$ a generator in $(i,j)$ grading $(\frac{-7}{2},\frac{7}{2})$, $\bar{x}$ a generator of grading $(\frac{7}{2},\frac{-7}{2})$ and $y$ a generator of grading $(\frac{7}{2},\frac{-5}{2})$.
	
	If $l\leq 5$ then there are summands $Y_0^{5-l}[l-\frac{5}{2},l-\frac{5}{2}]\oplus Y_{-1}^{6-l}[l-\frac{7}{2},l-\frac{7}{2}]$. Considering the symmetry properties of $\widehat{\HFL}(L)$ we readily see that either $l=0$ or $l\geq 4$, as else $\rank(\widehat{\HFK}(L;\Q))>20$. If $l=0$ then there are no additional generators, and we see that $L$ bounds an annulus, contradicting our assumption on the Euler characteristic. If $l=4$ then $\widehat{\HFL}(L;\Q)$ either contains no extra generators or a cube $B_{-6}[\dfrac{-1}{2},\dfrac{-1}{2}]$ by symmetry. But in this case the maximal Alexander grading of $\widehat{\HFK}(L)$ is $4$, contradicting the assumption that $\chi(L)\leq -8$.

We are thus left only with the case that $l=5$, in which case $\widehat{\CFL}(L;\Q)$ has a summand $X_0[\frac{5}{2},\frac{5}{2}]\oplus Y_{-1}[\frac{3}{2},\frac{3}{2}]$. By symmetry $\widehat{\CFL}(L;\Q)$ has a summand $B_{-10}[\frac{-5}{2},\frac{-5}{2}]$ $\widehat{\CFL}(L;\Q)$ can have at most three other $B_d$ summands. If there are one or three more $B_d$ summands then the symmetry of $\widehat{\HFL}(L;\Q)$ implies that one such summand must be of the form $B_{-6}[\frac{-1}{2},\frac{-1}{2}]$. Thus if $\widehat{\HFL}(L;\Q)$ has $0$ or $1$ extra $B_d$ summands we see that after reversing the orientation of a component of $L$, $L$ bounds an annulus, and is therefore $T(2,10)$, a contradiction since $T(2,10)$ has distinct link Floer homology.

Suppose $\widehat{\CFL}(L;\Q)$ has two or three additional $B_d$ summands. If there are three, then the symmetry of $\widehat{\HFL}(L;\Q)$ implies again that $\widehat{\CFL}(L;\Q)$ has a $B_{-6}[\frac{-1}{2},\frac{-1}{2}]$ summand. Let $B_{i+j-5}[i,j]$ be one of the two remaining summands. Without loss of generality we may take $j\geq i$. Observe that unless $-\frac{5}{2}\leq i\leq j\leq\frac{3}{2}$, we have that $\widehat{\HFL}(L)$ is of rank $2$ in one of the two maximal Alexander gradings, $A_i$, so that $L_i$ is a braid axis, but the maximal $A_i$ grading is not $\frac{5}{2}$, a contradiction.

Suppose $(i,j)\not\in\{(\frac{-5}{2},\frac{-5}{2}),(\frac{-5}{2},\frac{-3}{2}),(\frac{-3}{2},\frac{-3}{2}),(\frac{-1}{2},\frac{-1}{2}),(\frac{-1}{2},\frac{1}{2}),(\frac{1}{2},\frac{1}{2}), (\frac{3}{2},\frac{3}{2})\}$. Then we can apply Proposition~\ref{prop:algebraicni} with  with $x$ the generator of grading $(i,j+1)$, $\bar{x}$ the generator of grading $(-i,-j-1)$ and $y$ the generator of grading $(-i,-j)$ to obtain a contradiction.

If $(i,j)=(\frac{-5}{2},\frac{-3}{2})$ or $(\frac{-1}{2},\frac{1}{2})$ then we can consider the link $L'$ obtained by switching the order of the two components. This reduces these two cases to the case $(i,j)=(\frac{1}{2},\frac{3}{2})$ and $(i,j)=(\-\frac{3}{2},-\frac{1}{2})$ that were excluded above. It follows that $\widehat{\HFL}(L)$ is contained in the lines of $A_2=A_1$, $A_2=A_1\pm\frac{1}{2}$. It follows in turn that $L$ bounds an annulus after changing the orientation of a component. Since $L$ has unknotted components and linking number $5$ it follows that $L$ is $T(2,10)$, as required.
\end{proof}

We conclude by noting that Martin's detection result for $T(2,6)$~\cite{martin2022khovanov}, $T(2,4)$ detection~\cite{xie2022links} and $T(2,2)$ detection~\cite{baldwin2019khovanov} could have been obtained by similar methods to those presented in this paper.

\end{subsection}
\end{section}

\bibliographystyle{plain}
\bibliography{main}

\end{document}